\renewcommand{\eprint}[1]{#1}
\numberwithin{equation}{section}
\theoremstyle{plain}
\newtheorem{thm}{Theorem}[section]
\newtheorem{prop}[thm]{Proposition}
\newtheorem{lemma}[thm]{Lemma}
\newtheorem{cor}[thm]{Corollary}
\theoremstyle{definition}
\newtheorem{defn}[thm]{Definition}
\theoremstyle{remark}
\newtheorem{remark}[thm]{Remark}
\newtheorem{example}[thm]{Example}
\newcommand\bp{\begin{proof}}
\newcommand\ep{\end{proof}}
\newcommand{\G}{{\mathcal{G}}}
\newcommand{\Gu}{{\mathcal{G}^{(0)}}}
\begin{document}

\title{Tracial weights on topological graph algebras}


\author{Johannes Christensen$^{1}$}
\thanks{$^{1}$KU Leuven, Department of Mathematics (Belgium).  E-mail:  johannes.christensen@kuleuven.be}

\thanks{Supported by a DFF-International Postdoctoral Grant.}

\begin{abstract}
We describe two kinds of regular invariant measures on the boundary path space $\partial E$ of a second countable topological graph $E$, which allows us to describe all extremal tracial weights on $C^{*}(E)$ which are not gauge-invariant. Using this description we prove that all tracial weights on the C$^{*}$-algebra $C^{*}(E)$ of a second countable topological graph $E$ are gauge-invariant when $E$ is free. This in particular implies that all tracial weights on $C^{*}(E)$ are gauge-invariant when $C^{*}(E)$ is simple and separable.
\end{abstract}

\maketitle

\section*{Introduction}
There is an abundance of C$^{*}$-algebras which can be realised as the C$^{*}$-algebras of topological graphs. To name a few examples, C$^{*}$-algebras arising from crossed products of $\mathbb{Z}$ by homeomorphisms, C$^{*}$-algebras arising from directed graphs, Kirchberg algebras satisfying the UCT and AF-algebras \cite{K2}. Since C$^{*}$-algebras of topological graphs provide a rich class of examples it is natural to study concepts like traces on such algebras.

The study of tracial states on topological graph C$^{*}$-algebras was initiated by Schaufhauser in \cite{S}. Schaufhauser proved that the gauge-invariant tracial states are affinely homeomorphic to a set of probability measures on the vertex set $E^{0}$, which we in this paper call the vertex-invariant measures. This generalises similar known results from the cases of directed graphs and crossed products by homeomorphisms, and it is known from these cases, that this simplification of the problem of describing gauge-invariant tracial states is very useful for providing concrete descriptions of traces. For directed graphs and crossed products it is also known that under the right assumptions, namely condition (K) for directed graphs and freeness of the action for crossed products, all tracial states are gauge-invariant. The notion of freeness of topological graphs generalises both of these conditions, which motivated Schaufhauser to make the following conjecture in \cite{S}. 

\bigskip

\noindent\emph{Conjecture: All tracial states on the C$^{*}$-algebra of a free topological graph are gauge-invariant}

\bigskip

\noindent If the C$^{*}$-algebra of a topological graph is simple, the graph has to be free, so a positive answer to this conjecture in particular implies that all tracial states on simple topological graph C$^{*}$-algebras can be described by vertex-invariant measures. This is a strong motivation for answering this conjecture, since it means that one can greatly simplify the problem of describing traces for simple topological graph C$^{*}$-algebras.  The last part of \cite{S} additionally argues that for many topological graphs the gauge-invariant tracial states are affinely homeomorphic with the states on the $K_{0}$-group of the graph C$^{*}$-algebra, which further stresses the importance of characterising which states are gauge-invariant.

\bigskip

The purpose of this paper is to continue the analysis of tracial states on topological graph C$^{*}$-algebras initiated in \cite{S}. In particular we will provide an affirmative answer to the above conjecture for separable C$^{*}$-algebras. In fact, we provide a graph-theoretical condition which we prove is equivalent to the existence of a non gauge-invariant tracial state c.f. Theorem \ref{thmnongauge} and Definition \ref{def217a}, and we then prove that a free graph can not satisfy this condition.

\bigskip

Instead of considering tracial states, we extend the scope of \cite{S} by considering \emph{tracial weights}. After  establishing the preliminaries, we therefore commence the paper by generalising the result of \cite{S} by establishing in Proposition \ref{propinvmeasurebij} a bijection between the following three sets:
\begin{itemize}
\item[-] The regular vertex-invariant measures on the vertex space $E^{0}$.
\item[-] The regular shift-invariant measures on the boundary path space $\partial E$ of $E$. 
\item[-] The gauge-invariant tracial weights on the graph C$^{*}$-algebra $C^{*}(E)$.
\end{itemize}
We then focus on two kinds of regular invariant measures on $\partial E$. First we analyse the extremal measures concentrated on the set of finite paths in $\partial E$, and in Theorem \ref{thmsingbi} we give a complete description of such measures. Secondly, we analyse the extremal invariant measures which are concentrated on paths in $\partial E$ which are eventually cyclic, and we completely describe these measures in Theorem \ref{thmcyclicbij}. Using this description, we provide both a necessary and sufficient condition for the existence of a non gauge-invariant \emph{tracial state} on $C^{*}(E)$, and a necessary and sufficient condition for the existence of a non gauge-invariant \emph{tracial weight}. Both of these conditions are graph-theoretical, which allows us prove that on a second countable free topological graph all tracial weights are gauge-invariant, proving that Schaufhauser conjecture is also true for tracial weights on separable $C^{*}$-algebras.


\section{Preliminaries}

\noindent We let $\mathbb{N}_{0}$ denote the natural numbers including zero.

\subsection{Topological graphs}
We will in the following introduce our terminology on topological graphs. We refer the reader to Katsura's original paper for more background \cite{K1}.

A topological graph $E=(E^{0}, E^{1}, r,s)$ consists of two locally compact Hausdorff spaces $E^{0}$ and $E^{1}$, a local homeomorphism $s: E^{1}\to E^{0}$ and a continuous map $r: E^{1} \to E^{0}$. We call $s$ the source map and $r$ the range map. Following the terminology for directed graphs, we refer to elements in $E^{0}$ as vertices and elements in $E^{1}$ as edges. We call the topological graph $E$ second countable when the two spaces $E^{0}$ and $E^{1}$ are second countable topological spaces.

We follow the convention for traversing paths which is used in \cite{K1, S}, i.e. a path $\alpha_{1}\alpha_{2} \cdots \alpha_{n}$ is a concatenation of edges $\alpha_{1}, \dots, \alpha_{n} \in E^{1}$ satisfying that $s(\alpha_{i})=r(\alpha_{i+1})$ for $i=1, \dots, n-1$. We say a path $\alpha:=\alpha_{1}\alpha_{2} \cdots \alpha_{n}$ has length $n$ and we write this $|\alpha|=n$. We interpret the vertices $E^{0}$ as paths of length zero. For any $n\in \mathbb{N}_{0}$ we denote by $E^{n}$ the paths of length $n$, which is then in keeping with the definition of vertices $E^{0}$ and edges $E^{1}$ for $n=0,1$. The space $E^{n}$ with $n\in \mathbb{N}$ inherits a locally compact Hausdorff topology by considering it as a subspace of the product space $\prod_{i=1}^{n} E^{1}$ with the product topology. Endowing $E^{n}$ with this topology for all $n\in \mathbb{N}$, we define the \emph{finite path space} $E^{*}$ as the set
$$
E^{*}:= \bigsqcup_{n=0}^{\infty} E^{n}
$$
with the topology generated by the topologies on the spaces $E^{n}$. We define the \emph{infinite path space} $E^{\infty}$ as the set of infinite concatenations $\alpha_{1}\alpha_{2}\cdots$ with $s(\alpha_{i})=r(\alpha_{i+1})$ for all $i\in \mathbb{N}$, and we write $|\alpha | =\infty$ for such a path $\alpha:=\alpha_{1} \alpha_{2} \cdots$. We can naturally extend the range map $r$ to a map on all paths in $E^{*}$ and $E^{\infty}$ by setting $r(\alpha)=r(\alpha_{1})$ when $|\alpha|\geq 1$ and $r(v)=v$ when $v\in E^{0}$. Likewise we can extend the source map to $E^{*}$ by setting $s(\alpha)=s(\alpha_{|\alpha|})$. The extensions of $r$ and $s$ to $E^{*}$ are then respectively also a continuous map and a local homeomorphism. We follow the standard convention of writing e.g. $AE^{*}B$ for paths $\alpha \in E^{*}$ with $r(\alpha) \in A$ and $s(\alpha) \in B$ when $A,B \subseteq E^{0}$.

There exists a maximal open subset $E_{reg}^{0}$ of $E^{0}$ satisfying that $r$ restricts to a proper surjection $r^{-1}(E_{reg}^{0}) \to E_{reg}^{0}$. More concretely $E_{reg}^{0}$ can be described as the set of vertices $v\in E^{0}$ satisfying that there exists an open neighbourhood $W$ of $v$ with $\overline{W}$ compact, $r^{-1}(\overline{W}) \subseteq E^{1}$ compact and $r(r^{-1}(W))=W$. We call $E^{0}_{reg}$ the set of \emph{regular vertices}, and we define $E^{0}_{sng}=E^{0} \setminus E^{0}_{reg}$ to be the set of \emph{singular vertices}. We set 
$$
E^{*}_{sng}:= E^{*} \cap s^{-1}(E^{0}_{sng}) \quad \text{ and } \quad
E^{n}_{sng}:= E^{n} \cap s^{-1}(E^{0}_{sng}) \quad \text{ for all $n\in \mathbb{N}$.}
$$

\subsection{The boundary path space of a topological graph}

We will in the following introduce the \emph{boundary path space $\partial E$} of a topological graph $E$. We refer the reader to \cite{Y} and Section 3 in \cite{S} for a more rigorous introduction.

As a set, we define the boundary path space $\partial E$ to be
$$
\partial E := E^{*}_{sng} \sqcup E^{\infty} \; .
$$ 
For any subset $S\subseteq E^{*}$ we define
$$
Z(S):= \{ \alpha \in \partial E\; | \; r(\alpha)\in S
\text{ or } \alpha_{1}\cdots \alpha_{n}\in S \text{ for some } 1\leq n \leq |\alpha| \} \; .
$$
Sets on the form $Z(U) \setminus Z(K)$, where $U\subseteq E^{*}$ is open and $K\subseteq E^{*}$ is compact, form a basis for a locally compact Hausdorff topology on $\partial E$. With this topology sets on the form $Z(K)$ are compact in $\partial E$ whenever $K\subseteq E^{*}$ is a compact set.

We define a map $\sigma:\partial E\setminus E^{0} \to \partial E$ by setting
$$
\sigma(\alpha_{1} \alpha_{2} \alpha_{3} \cdots ) 
=\alpha_{2} \alpha_{3} \cdots \quad \text{ for $\alpha=\alpha_{1} \alpha_{2} \alpha_{3} \cdots \in \partial E$ with $|\alpha|\geq 2$,} 
$$
and $\sigma(\alpha)=s(\alpha_{1})$ when $\alpha=\alpha_{1}\in \partial E$ satisfies $|\alpha |=1$. This map is a local homeomorphism which we call the \emph{backwards shift map}, c.f. Proposition 3.5 in \cite{S}. The observation in the following Lemma is implicitly used in the proof of Proposition 3.5 in \cite{S}. Since it will be vital for us at a later point, we will provide a proof for clarity. The statement of the Lemma also uses $\sigma$ to denote the map $E^{*} \setminus E^{0} \to E^{*}$ which is defined completely analogous to above. We also use the notation 
$$
UV=\{ \alpha \beta \; | \; s(\alpha)=r(\beta)\; ,\alpha \in U \text{ and } \beta \in V \} \subseteq E^{k+l}
$$
for subsets $U\subseteq E^{k}$ and $V \subseteq E^{l}$. Notice that $UV$ is open if $U,V$ are open, $UV$ is closed if $U,V$ are closed and $UV$ is compact if $U,V$ are compact.

\begin{lemma} \label{lemmanotE}
The set $\partial E \setminus E^{0}$ is open and has a basis of sets on the form $Z(U) \setminus Z(K)$ satisfying:
\begin{enumerate}
\item\label{item1E} $U\subseteq E^{*}$ is open and $K\subseteq E^{*}$ is compact , 
\item\label{item2E} $U\subseteq E^{l}$ for some $l\geq 1$ and $K\cap E^{0}=\emptyset$,  
\item\label{item3E} there exists $W\subseteq E^{1}$ with $s$ injective on $W$ and $x_{1}\in W$ for all $x\in K\cup U$, and 
\item\label{item4E} $\sigma(Z(U) \setminus Z(K)) = Z(\sigma(U)) \setminus Z(\sigma(K))$. 
\end{enumerate}
\end{lemma}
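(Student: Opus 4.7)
The plan is to deduce openness of $\partial E \setminus E^{0}$ as a byproduct of the basis construction: given any $\alpha \in \partial E \setminus E^{0}$, I will produce, inside an arbitrary basic open neighborhood of $\alpha$, a refined neighborhood of the form $Z(U)\setminus Z(K)$ satisfying (1)--(4). Since $\alpha \notin E^{0}$, we have $|\alpha|\geq 1$, and the local homeomorphism property of $s$ produces an open $W \subseteq E^{1}$ with $\alpha_{1}\in W$ and $s|_{W}$ injective. Because $W\subseteq E^{1}$, every element of $Z(W)$ has length at least one, so $Z(W)\cap E^{0}=\emptyset$, which already shows that $\partial E\setminus E^{0}$ is open. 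Now fix a basic neighborhood $Z(U')\setminus Z(K')$ of $\alpha$ with $U'\subseteq E^{*}$ open and $K'\subseteq E^{*}$ compact, and decompose $K' = \bigsqcup_{n=0}^{N} K'_{n}$ and $U' = \bigsqcup_{n} U'_{n}$ along the clopen pieces $E^{n}\subseteq E^{*}$; only finitely many $K'_{n}$ are nonempty, by compactness. Using local compactness of $E^{1}$ together with $r(\alpha)\notin K'_{0}$ and closedness of $K'_{0}$, I would shrink $W$ to an open $W'\ni \alpha_{1}$ with $\overline{W'}\subseteq W$ compact and $r(\overline{W'})\cap K'_{0}=\emptyset$.

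The choice of $U$ then splits into two cases. Either $\alpha_{1}\cdots\alpha_{n_{0}}\in U'_{n_{0}}$ for some $n_{0}\geq 1$, in which case I set $l=n_{0}$ and $U = U'_{n_{0}}\cap W'E^{n_{0}-1}$; or only $r(\alpha)\in U'_{0}$ witnesses $\alpha\in Z(U')$, in which case I set $l=1$ and $U = W'\cap r^{-1}(U'_{0})$. In both cases $U\subseteq E^{l}$ is open, contains $\alpha_{1}\cdots\alpha_{l}$, and satisfies $Z(U)\subseteq Z(U')$. For the compact part I take $K = \bigsqcup_{n\geq 1}\bigl(K'_{n}\cap \overline{W'}E^{n-1}\bigr)$, which is compact as a finite disjoint union of intersections of compact sets with closed sets. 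Conditions (1)--(3) are then immediate: $K\cap E^{0}=\emptyset$ because the index is $n\geq 1$, and $W$ itself witnesses (3). For the inclusion $Z(U)\setminus Z(K) \subseteq Z(U')\setminus Z(K')$, observe that any $\beta$ on the left has $\beta_{1}\in W'$, so $r(\beta)\in r(W')$ avoids $K'_{0}$, while any putative witness $\beta_{1}\cdots\beta_{n}\in K'_{n}$ would automatically lie in $K'_{n}\cap\overline{W'}E^{n-1}\subseteq K_{n}$, contradicting $\beta\notin Z(K)$.

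The technical heart of the proof is (4). Because every $x\in U\cup K$ has $x_{1}\in W$ and $s|_{W}$ is injective, the restriction of $\sigma$ to $Z(W)$ is a continuous bijection onto $Z(s(W))$ with inverse $\gamma\mapsto e_{\gamma}\gamma$, where $e_{\gamma} = (s|_{W})^{-1}(r(\gamma))\in W$; this inverse is continuous because $s$ is a local homeomorphism, so $\sigma|_{Z(W)}$ is in fact a homeomorphism. From here the set identities $\sigma(Z(U)) = Z(\sigma(U))$ and $\sigma(Z(K)) = Z(\sigma(K))$ follow by unwinding the two clauses in the definition of $Z(\cdot)$, since the first edge of any preimage is uniquely forced to be $e_{\gamma}$. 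Injectivity of $\sigma$ on $Z(W)$ then yields $\sigma(Z(U)\setminus Z(K)) = Z(\sigma(U))\setminus Z(\sigma(K))$. I expect the main bookkeeping burden to be the case $l=1$, where $\sigma(U)\subseteq E^{0}$ causes the two clauses in the definition of $Z$ to collapse; once one observes that $Z(\sigma(U)) = \{\beta\in \partial E : r(\beta)\in \sigma(U)\}$ in that case, the verification reduces to the same edge-prepending argument as for general $l$.
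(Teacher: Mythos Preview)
Your argument is correct and in fact streamlines the paper's construction. Both proofs begin by noting that $\partial E\setminus E^{0}=Z(E^{1})$ is open and that condition \eqref{item3E} implies \eqref{item4E} via the injectivity of $\sigma$ on $Z(W)$, so the substance lies in producing $U$ and $K$ satisfying \eqref{item1E}--\eqref{item3E} inside a given basic neighbourhood. The paper chooses for each $1\le k\le l$ a precompact open $V_{k}\ni x_{k}$ with $\overline{V_{k}}$ inside a set on which $s$ is injective, intersects $U$ with $V_{1}\cdots V_{l}$, and builds the new compact set from $K$ intersected with the products $\overline{V_{1}}\cdots\overline{V_{i}}$. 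This forces a case split according to whether $l\ge n$ (the maximal length in $K$) or $l=|x|<n$; in the latter case the paper must augment the product with coordinate projections $K_{l+1},\dots,K_{k}$ of $K\cap E^{k}$ to keep the construction compact.

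You avoid this by constraining only the \emph{first} edge: a single precompact $W'\ni\alpha_{1}$ with $\overline{W'}\subseteq W$ and $r(\overline{W'})\cap K'_{0}=\emptyset$ suffices, and then $K=\bigsqcup_{n\ge 1}\bigl(K'_{n}\cap\overline{W'}E^{n-1}\bigr)$ is automatically compact because $\overline{W'}E^{n-1}$ is closed (preimage of $\overline{W'}$ under the continuous first-coordinate projection $E^{n}\to E^{1}$) and $K'_{n}$ is compact. Your verification that $Z(U)\cap Z(K')\subseteq Z(K)$ then goes through uniformly in $n$, with no need to compare $l$ to the lengths appearing in $K'$. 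The trade-off is nil for the purposes of this lemma: the paper's finer control over all $l$ coordinates is never used downstream, so your simpler construction delivers exactly what is needed with less bookkeeping.
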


\begin{proof}
Remark first that $\partial E \setminus E^{0}= Z(E^{1})$ is an open set and that \eqref{item3E} implies \eqref{item4E}, because when $s$ is injective on $W$ then $\sigma$ is injective on $Z(W)$, and \eqref{item4E} then follows since $Z(U), Z(K) \subseteq Z(W)$ .

Now consider a compact set $K \subseteq E^{*}$, an open set $U\subseteq E^{*}$ and an element $x\in (\partial E \setminus E^{0} ) \cap (Z(U)\setminus Z(K))$. Since $K\subseteq E^{*}$ is compact, there exists a $n\in \mathbb{N}$ such that all paths in $K$ has length less than or equal to $n$. We can assume without loss of generality that there exists a $l\in \mathbb{N}_{0}$ with $U \subseteq E^{l}$. If $U\subseteq E^{0}$ we can replace it by $r^{-1}(U)\subseteq E^{1}$, so we can also assume that $l\geq 1$. Since $Z(U E^{k}) \subseteq Z(U)$ we can also assume that either $ n \leq l \leq |x|$ or $l=|x|$, depending on whether $|x|\geq n$ or $|x| <n$.

\bigskip

Assume first that $l \geq n$. For each $1\leq k \leq l$ we choose an open neighbourhood $V_{k} \subseteq E^{1}$ of $x_{k}$ such that $\overline{V_{k}}$ is compact and $\overline{V_{k}} \subseteq W_{k}$ where $W_{k} \subseteq E^{1}$ is open and $s:W_{k} \to E^{0}$ is injective. Set
$$
U'=  U \cap (V_{1}V_{2}\cdots V_{l} ) \setminus (r|_{E^{l}})^{-1}(K\cap E^{0}) \; .
$$
Since $K\cap E^{0}$ is closed and $r|_{E^{l}}:E^{l}\to E^{0}$ is continuous then $U'$ is open. Clearly $x_{1} \cdots x_{l}\in U'$. Now set
$$
K'= (K\setminus E^{0}) \cap \big( \bigcup_{i=1}^{l} \overline{V_{1}} \cdots \overline{V_{i}} \big) \; ,
$$
then $U'$ and $K'$ satisfy \eqref{item1E},\eqref{item2E} and \eqref{item3E} with $W=W_{1}$. An element $y\in Z(U')\cap Z(K)$ can be written $y=uy'$ with $u\in U'$, and since $l\geq n$ then $u=\kappa u'$ with $\kappa \in K$. Since $r(u)\notin K\cap E^{0}$ then $|\kappa|>0$, and hence $\kappa \in K'$. This implies that $y\in Z(K')$. Hence
$$
x\in Z(U') \setminus Z(K') \subseteq (\partial E \setminus E^{0} ) \cap (Z(U)\setminus Z(K)) \; ,
$$
which proves the lemma in this case.

\bigskip

Assume then that $l=|x|$ and define $U'$ as before. For each $1\leq k\leq l$ define 
$$
N_{k}=\overline{V_{1}} \; \overline{V_{2}}  \cdots \overline{V_{k}} \cap (K \cap E^{k}) \; .
$$
If $k>l$ let $K_{i}$ be the projection of $K\cap E^{k}$ onto the $i$'th coordinate of $E^{k}$ for $i\leq k$. Then $K_{i}$ is compact, and we set
$$
N_{k} =(\overline{V_{1}}  \;\cdots \overline{V_{l}} \;  K_{l+1} \cdots  K_{k}) \cap (K\cap E^{k}) \; ,
$$
and we define $K'= N_{1} \cup \cdots \cup N_{n}$. It is straightforward to check that $U'$ and $K'$ satisfies  \eqref{item1E}-\eqref{item3E} and that $x\in Z(U') \setminus Z(K')$. To see that
\begin{equation} \label{eqkappa}
Z(U') \setminus Z(K') \subseteq (\partial E \setminus E^{0} ) \cap (Z(U)\setminus Z(K)) \; ,
\end{equation}
take an arbitrary element $y\in Z(U')\cap Z(K)$, and write $y=\kappa y'$ with $\kappa\in K$ and $y= uy''$ with $u\in U'$. If $|\kappa| \leq l$ then $u=\kappa u'$ and hence $\kappa\in V_{1} \cdots  V_{|\kappa|}$, implying that $\kappa \in N_{|\kappa|} \subseteq K'$. If $|\kappa|>l$ then $\kappa=u\kappa '$, so since $\kappa \in K$ we get $\kappa \in N_{|\kappa|} \subseteq K'$. In conclusion $Z(U')\cap Z(K) \subseteq Z(K')$ which proves \eqref{eqkappa}.
\end{proof}

We call a subset $M \subseteq \partial E$ \emph{invariant under $\sigma$}, or just \emph{invariant}, if it satisfies
\begin{equation} \label{eqdefinv}
\sigma^{-1}(M) = M\setminus E^{0} \; .
\end{equation}
We will need the following result on the Borel sets of $\partial E$, which in particular implies that $E^{*}_{sng}$ is an invariant Borel set of $\partial E$.

\begin{lemma} \label{lemma11}
Let $S\subseteq E^{0}$ be a Borel set. Then the set 
$$
E_{sng}^{*} S = \{ \alpha \in E^{*}_{sng} \; | \; s(\alpha) \in S \}
$$ 
is an invariant Borel subset of $\partial E$. 
\end{lemma}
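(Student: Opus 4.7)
I would split the argument into the invariance part and the Borel part, which are essentially independent.

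\emph{Invariance.} I plan to compute $\sigma^{-1}(E^*_{sng}S)$ directly from the definition of $\sigma$. For any $\alpha \in \partial E \setminus E^0$ one has $|\sigma(\alpha)| = |\alpha|-1$ (interpreting $\infty - 1 = \infty$) and $s(\sigma(\alpha)) = s(\alpha)$; in the case $|\alpha|=1$ this uses the convention that the source of a vertex regarded as a path of length zero is itself. Hence $\sigma(\alpha) \in E^*_{sng}S$ iff $|\alpha| < \infty$ and $s(\alpha) \in E^0_{sng} \cap S$, which is iff $\alpha \in E^*_{sng}S$. This is precisely \eqref{eqdefinv}.

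\emph{Borel.} I would decompose
\[
E^*_{sng}S \;=\; \bigsqcup_{n=0}^{\infty} \bigl( E^n_{sng} \cap s^{-1}(S) \bigr)
\]
and verify each piece is Borel in $\partial E$. First, each $E^n_{sng}$ is itself a Borel subset of $\partial E$: for $n \geq 1$ it equals $Z(E^n) \setminus Z(E^{n+1})$ (a difference of open sets, since $E^n, E^{n+1} \subseteq E^*$ are open), and $E^0_{sng} = \partial E \setminus Z(E^1)$ is closed.

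The key technical step is to show that any Borel set $B$ in the natural topology of $E^n$ intersects $E^n_{sng}$ in a Borel subset of $\partial E$. I would do this by comparing the two subspace topologies on $E^n_{sng}$---the one from $E^n$ and the one from $\partial E$---and showing the latter refines the former. For open $V \subseteq E^n$ with $n \geq 1$ the set $Z(V)$ is open in $\partial E$ and one checks $Z(V) \cap E^n_{sng} = V \cap E^n_{sng}$, since any boundary path of length exactly $n$ lying in $Z(V)$ must agree with its own prefix $\alpha_1 \cdots \alpha_n \in V$; the case $n=0$ is immediate from $Z(V) \cap E^0_{sng} = V \cap E^0_{sng}$. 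Since $E^n_{sng}$ itself is Borel in $\partial E$, this transfers Borel sets of $E^n$ to Borel sets of $\partial E$ after intersecting with $E^n_{sng}$. Applying this with $B = s^{-1}(S)$ (Borel in $E^n$ by continuity of $s$) completes the proof. I expect the topology comparison to be the only nontrivial point; once the identity $Z(V) \cap E^n_{sng} = V \cap E^n_{sng}$ is isolated the rest is bookkeeping.
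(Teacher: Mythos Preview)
Your proof is correct and follows essentially the same route as the paper. The paper shows $E^n_{sng}=Z(E^n)\setminus Z(E^{n+1})$ is Borel and then uses the identity $\{\alpha\in E^n_{sng}: s(\alpha)\in V\}=E^n_{sng}\cap Z((s|_{E^n})^{-1}(V))$ for open $V\subseteq E^0$; your version factors this through the more general observation $Z(V)\cap E^n_{sng}=V\cap E^n_{sng}$ for open $V\subseteq E^n$ (i.e.\ the $\partial E$-subspace topology on $E^n_{sng}$ refines the $E^n$-subspace topology), which is a clean way to package the same point.
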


\begin{proof}
The set $E^{n}$ is open in $E^{*}$ for any $n\in \mathbb{N}_{0}$, from which it follows that $ E_{sng}^{n}= Z(E^{n})\setminus Z(E^{n+1})$ is a Borel set in $\partial E$. If $V\subseteq E^{0}$ is open then
$$
\{ \alpha \in E^{n}_{sng} \; | \; s(\alpha) \in V \}
=E^{n}_{sng} \cap Z((s|_{E^{n}})^{-1}(V) ) \; ,
$$
and hence $\{ \alpha \in E^{n}_{sng} \; | \; s(\alpha) \in S \}$ is Borel for all $n\in \mathbb{N}_{0}$, proving that $E_{sng}^{*} S$ is Borel.

To see that the set is invariant, notice that if $\alpha \in E_{sng}^{*}S$ with $|\alpha|\geq 1$, then $s(\sigma(\alpha))=s(\alpha)\in S\cap E_{sng}^{0}$, implying that $\sigma(\alpha) \in E_{sng}^{*} S$. If $|\alpha|\geq 1$ and $\sigma(\alpha)\in E^{*}_{sng}S$ then $s(\alpha)=s(\sigma(\alpha))\in S\cap E_{sng}^{0}$. Hence $\alpha \in  E^{*}_{sng} S$ which proves the equality \eqref{eqdefinv} with $M=E_{sng}^{*} S$.
\end{proof}

\subsection{Graph C$^{*}$-algebras and their groupoid picture}

If $E$ is a topological graph we can define a left and right action of $C_{0}(E^{0})$ on $C_{c}(E^{1} )$ by $a\cdot \xi = (a \circ r) \xi$ and $\xi\cdot a = \xi(a\circ s)$ for $a\in C_{0}(E^{0})$ and $\xi \in C_{c}(E^{1})$. This makes $C_{c}(E^{1} )$ into a bimodule. For $\xi, \eta \in C_{c}(E^{1})$ then
$$
E^{0}\ni v\mapsto \langle \xi, \eta\rangle(v) = \sum_{e\in s^{-1}(v)} \overline{\xi(e)}\eta(e) 
$$
defines a $C_{0}(E^{0})$ valued inner product on $C_{c} (E^{1})$. One obtains a C$^{*}$-correspondence $H(E)$ by completing $C_{c}(E^{1})$ with respect to this inner product. The graph C$^{*}$-algebra $C^{*}(E)$ associated to the graph $E$ is then the Cuntz-Pimsner algebra of the C$^{*}$-correspondence $(H(E), C_{0}(E^{0}) )$. We remind the reader that our assumption of $E^{0}$ and $E^{1}$ being second countable is equivalent with $C^{*}(E)$ being separable, c.f. Proposition 6.3 in \cite{K1}.

\bigskip

For the analysis we will carry out in this paper, it is crucial that any separable topological graph C$^{*}$-algebra $C^{*}(E)$ can be realised as the groupoid C$^{*}$-algebra $C^{*}(\mathcal{G}_{E})$ of a second countable locally compact Hausdorff \'etale groupoid $\mathcal{G}_{E}$. Following \cite{Y} we construct a groupoid $\mathcal{G}_{E}$ which as a set is defined as
$$
\mathcal{G}_{E}:= \left \{  (\alpha, k-l , \beta) \in \partial E \times\mathbb{Z} \times \partial E \; | \;  k,l \geq 0, \ |\alpha|\geq k, \ |\beta| \geq l \text{ and } \sigma^{k}(\alpha)=\sigma^{l}(\beta)   \right\} \; .
$$
The groupoid $\mathcal{G}_{E}$ is a locally compact Hausdorff \'etale groupoid when we equip it with the topology generated by sets
$$
Z(U, m, n , V) = \left \{  (\alpha, m-n, \beta)\in \mathcal{G}_{E} \; | \; \alpha \in U, \ \beta \in V \text{ and } \sigma^{m}(\alpha)=\sigma^{n}(\beta)  \right \}
$$
where $m, n \in \mathbb{N}_{0}$ and $U\subseteq \sigma^{-m}(\partial E)$ and $V\subseteq \sigma^{-n}(\partial E)$ are open sets satisfying that respectively $\sigma^{m}$ and $\sigma^{n}$ are homeomorphisms on them. Endowed with this topology we can identify the topological space $\partial E$ with the unit space $\mathcal{G}_{E}^{(0)}$ via the map $\partial E \ni \alpha \mapsto (\alpha, 0, \alpha) \in \mathcal{G}_{E}^{(0)}$, and the gauge-action $\{\gamma_{z} \}_{z\in \mathbb{T}}$ on $C^{*}(E)$ can be described as the dual action of the homomorphism $\Phi: \mathcal{G}_{E} \to \mathbb{Z}$ given by $\Phi(\alpha, k, \beta)=k$. This groupoid becomes amenable \cite{RW} and $C^{*}(\mathcal{G}_{E}) \simeq C^{*}(E)$.

\subsection{Tracial weights}

We will apply results that were developed for describing KMS states for C$^{*}$-dynamical systems arising from continuous $1$-cocycles on \'etale groupoids. These were first developed by Renault for principal groupoids \cite{Rbook}, and later generalised by Neshveyev to non-principal groupoids \cite{N}. These results in particular applies to tracial states. We will in the following give a recap of these result for tracial weights, see \cite{C} for the generalisation from states to weights.

Recall that a weight $\psi$ on a C$^{*}$-algebra $\mathcal{A}$ is a map $\psi: \mathcal{A}_{+} \to [0, \infty]$ with $\psi(a+b)=\psi(a)+\psi(b)$ and $\psi(\lambda a) = \lambda \psi(a)$ for all $\lambda \geq 0$. We call $\psi$ a tracial weight if it is densely defined, lower semi-continuous and $\psi(a^{*}a)=\psi(aa^{*})$ for all $a\in \mathcal{A}$. A tracial weight $\psi$ is \emph{extremal} when any decomposition $\psi=\psi_{1}+\psi_{2}$ into tracial weights satisfies $\psi_{1}, \psi_{2} \in \{ \lambda \psi \; | \; \lambda \geq 0 \}$.

\bigskip

Let us in the following elucidate the close connection between tracial weights on groupoid C$^{*}$-algebras and so-called invariant measures on the unit space of the groupoid. Let $\mathcal{G}$ be a locally compact second countable Hausdorff \'etale groupoid with unit space $\mathcal{G}^{(0)}$, and denote the range map by $r: \G\to \Gu$ and the source map by $s: \G \to \Gu$. Recall that a bisection $W \subseteq \mathcal{G}$ is an open set such that $r(W)$ and $s(W)$ are open sets and $r|_{W}:W\to r(W)$ and $s|_{W} : W \to s(W)$ are homeomorphisms. We call a measure $\nu$ on $\Gu$ an \emph{invariant measure} if it is a regular\footnote{We refer to \cite{Cohn} for the definition of a regular measure. We will only consider locally compact second countable Hausdorff spaces, so our measures are regular if and only if they are finite on compact sets.} Borel measure on $\mathcal{G}^{(0)}$ which satisfies
\begin{equation} \label{eqinvgroupoidmeasure}
\nu(s(W)) = \nu(r(W))\; ,
\end{equation}
for all open bisection $W \subseteq \mathcal{G}$. For any tracial weight $\psi$ on $C^{*}(\G)$ the restriction of $\psi$ to $C_{0}(\Gu)$ gives rise to an invariant measure $\nu_{\psi}$ on $\mathcal{G}^{(0)}$, and any invariant measure $\nu$ on $\Gu$ gives rise to a tracial weight via the formula
$$
C^{*}(\G)_{+} \ni \; a \mapsto \int_{\Gu} P(a) \; \mathrm{d} \nu
$$
where $P: C^{*}(\G) \to C_{0}(\Gu)$ is the conditional expectation, c.f. Theorem 6.3 in \cite{C}. We call an invariant measure $\nu$ \emph{extremal} if any decomposition $\nu=\nu_{1}+\nu_{2}$ into invariant measures satisfies that $\nu_{1}, \nu_{2} \in \{\lambda \nu \; | \; \lambda \geq 0 \}$. We call it \emph{ergodic} if any invariant Borel set $B$ of $\mathcal{G}^{(0)}$, i.e. a set with $s(r^{-1}(B))=B$, satisfies either $\nu(B)=0$ or $\nu(B^{C})=0$. It then follows from e.g. Theorem 5.5 in \cite{C} that for an invariant measure $\nu$ then
\begin{equation}\label{eqinviserg}
\text{$\nu$ is an ergodic measure if and only if $\nu$ is an extremal measure.}
\end{equation}

\bigskip

Let us now consider the case of the second countable locally compact Hausdorff \'etale groupoid $\mathcal{G}_{E}$ arising from a second countable topological graph $E$. For this groupoid it is easy to see that $B\subseteq \mathcal{G}_{E}^{(0)} \cong \partial E$ is invariant in the sense that $s(r^{-1}(B))=B$ if and only if is invariant under $\sigma$ as in \eqref{eqdefinv}. If we let $\nu_{\psi}$ be the invariant measure on $\Gu$ corresponding to a tracial weight $\psi$, then the map $\psi \mapsto \nu_{\psi}$ is a surjection from the set of extremal tracial weights on $C^{*}(\G_{E})$ to the set of ergodic invariant measures on $\partial E$, c.f. Corollary 7.5 in \cite{C}. Furthermore, a regular Borel measure $\nu$ on $\partial E$ satisfies \eqref{eqinvgroupoidmeasure} for $\mathcal{G}_{E}$ if and only if it satisfies the condition:
\begin{equation} \label{eqinvmeasure}
\nu(\sigma(A)) = \nu(A) \quad \text{ for all Borel $A\subseteq \partial E\setminus E^{0}$ with $\sigma$ injective on $A$,}
\end{equation}
see e.g. (the proof of) Lemma 3.2 in \cite{T1}.

For the groupoid $\mathcal{G}_{E}$ we can use Theorem 7.4 in \cite{C} with the groupoid homomorphism $c=0$ and $\Phi: \mathcal{G}_{E} \to \mathbb{Z}$ to get the following description of the tracial weights on $C^{*}(E)$.

\begin{thm} \label{thmgroupoiddescrip}
Let $E$ be a second countable topological graph. Define for all $\alpha \in \partial E$ the set
$$
\text{Per}(\alpha) := \{k-l \in \mathbb{Z} \; | \;  k, l\leq |\alpha| \text{ and } \sigma^{k}(\alpha)=\sigma^{l}(\alpha) \} \; .
$$
Let $\nu$ be an extremal non-zero invariant measure on $\partial E$. There exists a unique subgroup $H$ of $\mathbb{Z}$ with
$$
\nu(\{ \alpha \in \partial E \; | \; H=\text{Per}(\alpha)  \}^{C}) =0 \; ,
$$ 
and there is an affine bijection between the state space of $C^{*}(H)$ and the set of tracial weights $\psi$ on $C^{*}(\mathcal{G}_{E})$ satisfying that $\nu_{\psi}=\nu$. A state $\phi$ on $C^{*}(H)$ maps to a tracial weight $\psi_{\nu, \phi}$ satisfying
$$
\psi_{\nu, \phi}(f) := \int_{\partial E} \sum_{(\alpha, k-l, \alpha) \in \mathcal{G}_{E}} f((\alpha, k-l, \alpha)) \phi(u_{k-l}) \; \mathrm{d} \nu(\alpha) \quad \text{ for all $f\in C_{c}(\mathcal{G}_{E})$,}
$$
where $\{u_{g}\}_{g\in H}$ are the canonical unitary generators of $C^{*}(H)$.
\end{thm}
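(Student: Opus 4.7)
The proof proceeds by direct application of Theorem 7.4 in \cite{C} to the étale groupoid $\mathcal{G}_E$ with the continuous homomorphism $\Phi(\alpha,k,\beta)=k$ and trivial cocycle $c=0$; since $c=0$, the associated KMS condition reduces to the tracial condition. Two things need to be established: the existence and uniqueness of the subgroup $H$, and the translation of the resulting parametrisation into the stated integral formula.

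For the first point, I would first verify that $\mathrm{Per}(\alpha)$ is always a subgroup of $\mathbb{Z}$: closure under negation is obtained by interchanging $k$ and $l$, and closure under addition follows from the calculation that $\sigma^{k_i}(\alpha)=\sigma^{l_i}(\alpha)$ for $i=1,2$ implies $\sigma^{k_1+k_2}(\alpha)=\sigma^{l_1+l_2}(\alpha)$. For finite paths the length constraint $|\sigma^k(\alpha)|=|\alpha|-k$ forces $\mathrm{Per}(\alpha)=\{0\}$, while for infinite paths $\mathrm{Per}(\alpha)$ captures the eventual period. A direct check then gives $\mathrm{Per}(\sigma(\alpha))=\mathrm{Per}(\alpha)$ for every $\alpha\in\partial E\setminus E^0$. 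Moreover, $B_H:=\{\alpha\in\partial E:\mathrm{Per}(\alpha)=H\}$ is a Borel set for each subgroup $H\leq\mathbb{Z}$, because the condition $\sigma^k(\alpha)=\sigma^l(\alpha)$ is Borel in $\alpha$ for each fixed pair $(k,l)$. Shift-invariance of $\mathrm{Per}$ translates to $\sigma^{-1}(B_H)=B_H\setminus E^0$, making each $B_H$ an invariant Borel subset of $\partial E$. The subgroups of $\mathbb{Z}$ form the countable family $\{n\mathbb{Z}:n\geq 0\}$, and these $B_H$ partition $\partial E$; ergodicity of $\nu$, equivalent to extremality by \eqref{eqinviserg}, then forces exactly one $B_H$ to have full $\nu$-measure, delivering the unique $H$.

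With $H$ identified, Theorem 7.4 of \cite{C} produces the affine bijection: the essential isotropy of $\nu$ in $\mathcal{G}_E$ is mapped by $\Phi$ precisely onto $H$, which is therefore the subgroup of $\mathbb{Z}$ entering the parametrisation. The explicit integral formula for $\psi_{\nu,\phi}$ is then the direct specialisation of the expression in \cite{C}, with the sum over the isotropy fibre $\{(\alpha,k-l,\alpha)\in\mathcal{G}_E\}$ paired with $\phi$ through the canonical unitaries $u_{k-l}\in C^*(H)$. The principal obstacle is purely clerical: one must match the hypotheses of Theorem 7.4 in \cite{C} to the present setting---second countability, local compactness, Hausdorffness and étaleness of $\mathcal{G}_E$, and continuity of $\Phi$---and verify that the conditional expectation $P$ used there integrates to the stated formula once $\mathcal{G}_E^{(0)}$ is identified with $\partial E$. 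Nothing conceptually new is required.
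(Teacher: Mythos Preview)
Your proposal is correct and follows the same approach as the paper, which simply invokes Theorem 7.4 in \cite{C} with $c=0$ and $\Phi:\mathcal{G}_E\to\mathbb{Z}$ without spelling out the verification of hypotheses. The additional details you supply---that $\mathrm{Per}(\alpha)$ is a subgroup, that the sets $B_H$ are Borel and $\sigma$-invariant, and that ergodicity singles out a unique $H$---are exactly the checks the paper leaves implicit.
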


\section{Invariant measures and their Riesz decomposition}
\label{subsection22}

Having established the necessary preliminaries, we will now study the invariant measures on the boundary path space $\partial E$.

Assume $\varphi:X\to Y$ is a local homeomorphism between two locally compact Hausdorff spaces $X$ and $Y$. For $f\in C_{c}(X)$ the function $Y\ni y\mapsto \sum_{x\in \varphi^{-1}(y)} f(x)$ is an element of $C_{c}(Y)$, c.f. Lemma 2.5 in \cite{S}. It follows that any regular measure $\mu$ on $Y$ gives rise to a regular measure $\varphi^{*}\mu$ on $X$ via this map.

Let $E=(E^{0}, E^{1}, r,s)$ be a topological graph. Since $s:E^{1} \to E^{0}$ is a local homeomorphism, any regular Borel measure $\mu$ on $E^{0}$ gives rise to a regular Borel measure $s^{*}\mu$ on $E^{1}$ satisfying that
$$
\int_{E^{1}} h \; \mathrm{d} s^{*} \mu = \int_{E^{0}} \sum_{e\in s^{-1}(v)} h(e) \; \mathrm{d} \mu(v) \quad \text{ for all $h\in C_{c}(E^{1})$.}
$$

\begin{defn} \label{eqvertexinv}
Let $\mu$ be a regular Borel measure on $E^{0}$. We call $\mu$ a \emph{vertex-invariant measure} if it satisfies
$$
\int_{E^{1}} f \circ r \; \mathrm{d} s^{*} \mu \leq \int_{E^{0}} f \; \mathrm{d} \mu
$$
for all positive functions $f\in C_{c}(E^{0})$, and with equality when $\text{supp}(f) \subseteq E_{reg}^{0}$.
\end{defn}

Notice that our vertex-invariant measures were introduced as \emph{invariant measure} in Definition 4.1 in \cite{S}, except that we consider regular measures where \cite{S} requires that the measures are probability measures. Since we also use the term invariant measure for the $\sigma$-invariant measures on the boundary path space $\partial E$, c.f. \eqref{eqinvmeasure}, we will use the terminology vertex-invariant measure for a measure satisfying Definition \ref{eqvertexinv}. It was established in Proposition 4.4 in \cite{S} that a vertex-invariant probability measure gives rise to an invariant probability measure on the boundary path space $\partial E$. In the following we will generalise this result to all regular measures.

\begin{prop}[\cite{S}] \label{prop44schaf}
Let $E$ be a second countable topological graph and let $\mu$ be a vertex-invariant measure on $E^{0}$. There exists a regular invariant measure $\nu$ on $\partial E$ such that
$$
\int_{\partial E} f \circ r \; \mathrm{d} \nu = \int_{E^{0}} f \; \mathrm{d} \mu \quad \text{ for all $f\in C_{c}(E^{0})$.}
$$
\end{prop}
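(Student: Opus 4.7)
The plan is to construct $\nu$ level by level, by defining auxiliary regular Borel measures on each of the finite path spaces $E^n$ and gluing them together with a measure on $E^\infty$. Since the source map $s \colon E^1 \to E^0$ is a local homeomorphism, so too is the iterated source map $s_n \colon E^n \to E^0$, $s_n(\alpha_1 \cdots \alpha_n) = s(\alpha_n)$, because $E^n$ is the iterated fibre product of $E^1$ over $E^0$ along $s$ and $r$. I therefore define $\mu_n := s_n^{*}\mu$ on $E^n$ via the pullback construction recalled at the start of the section, so that $\mu_0 = \mu$.

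The key step is to establish, using the vertex-invariance of $\mu$, that for every Borel $V \subseteq E^n$,
$$\mu_{n+1}(V \cdot E^1) \leq \mu_n(V),$$
with equality whenever $s_n(V) \subseteq E_{reg}^0$. Unpacking the definitions, this reduces to the inequality $\int (g \circ r) \, d s^{*}\mu \leq \int g \, d\mu$ applied to the function $g(v) := \#\{\alpha \in V : s_n(\alpha) = v\}$. For open $V$ the function $g$ is lower semicontinuous (using that $s_n$ is a local homeomorphism), so vertex-invariance extends to $g$ by approximating from below by functions in $C_c^+(E^0)$ together with monotone convergence; the general Borel case follows from regularity.

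With this key inequality, the differences $\tilde\mu_k(A) := \mu_k(A) - \mu_{k+1}(A \cdot E^1)$ define nonnegative regular Borel measures on $E^k$ that are concentrated on $E_{sng}^k$ (by the equality case of the key inequality), while monotonicity of $k \mapsto \mu_k(V \cdot E^{k-n})$ permits defining an infinite-path component by $\nu_\infty(Z(V) \cap E^\infty) := \lim_{k \to \infty} \mu_k(V \cdot E^{k-n})$ for $V \subseteq E^n$. The telescoping identity
$$\mu_n(V) = \sum_{k \geq n} \tilde\mu_k(V \cdot E^{k-n}) + \nu_\infty(Z(V) \cap E^\infty)$$
guarantees that these pieces fit together, and I would promote the candidate set function $\nu := \sum_k \tilde\mu_k + \nu_\infty$, interpreted through the embeddings $E_{sng}^k \hookrightarrow \partial E$ and $E^\infty \hookrightarrow \partial E$, into a regular Borel measure on $\partial E$ by applying Riesz representation to the associated positive linear functional on $C_c(\partial E)$, with finiteness on compact subsets of $\partial E$ verified via the basis of sets $Z(U)\setminus Z(K)$ from Lemma \ref{lemmanotE}.

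Finally, shift-invariance \eqref{eqinvmeasure} is checked on a basis of $\partial E \setminus E^0$: for $U \subseteq E^l$ open satisfying condition \eqref{item3E} of Lemma \ref{lemmanotE}, the map $U \to \sigma(U)$ is a bijection preserving sources, so $\mu_l(U) = \mu_{l-1}(\sigma(U))$, which together with Lemma \ref{lemmanotE}(\ref{item4E}) gives $\nu(Z(U)) = \nu(\sigma(Z(U)))$. The required identity $\int_{\partial E} (f \circ r) \, d\nu = \int_{E^0} f \, d\mu$ is then immediate, since $\{\alpha \in \partial E : r(\alpha) \in B\} = Z(B)$ and the construction yields $\nu(Z(B)) = \mu_0(B) = \mu(B)$ for Borel $B \subseteq E^0$. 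I expect the main obstacle to be the rigorous verification that the glued assignment $\sum_k \tilde\mu_k + \nu_\infty$ extends to a well-defined regular Borel measure on $\partial E$ that is $\sigma$-additive across the transition from finite singular paths to genuinely infinite ones; this step relies essentially on Lemma \ref{lemmanotE}, which reduces matters to basic open sets supported on paths of a single length.
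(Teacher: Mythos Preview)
Your construction is correct in outline and in fact closely parallels the paper's proof, but with a different organization. The paper first applies the Riesz-type decomposition of Theorem~\ref{thmuniqueinvdecomp} to split $\mu=\mu_{1}+\mu_{2}$ into a harmonic part ($T\mu_{1}=\mu_{1}$) and a part with $\int f\,\mathrm{d}T^{n}\mu_{2}\to 0$, and then treats the two cases separately. Your $\tilde\mu_{k}$ coincide exactly with the paper's $s_{k}^{*}\eta$ (where $\eta=\mu-T\mu$), so your finite-path part is literally the paper's boundary construction. Your $\nu_{\infty}(Z(V)\cap E^{\infty})=\lim_{k}\mu_{k}(V\cdot E^{k-n})$ equals $\lim_{k}\int g\,\mathrm{d}T^{k-n}\mu$ with $g(v)=\#\{\alpha\in V:s_{n}(\alpha)=v\}$, which by Theorem~\ref{thmuniqueinvdecomp} is $\int g\,\mathrm{d}\mu_{1}=s_{n}^{*}\mu_{1}(V)$; so your $\nu_{\infty}$ is exactly the measure the paper builds from the harmonic piece. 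In other words, you have rediscovered the decomposition implicitly rather than invoking it up front.

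The one place where the paper's route is genuinely cleaner is the step you flag yourself: promoting $\nu_{\infty}$ from a set function on cylinder sets to a regular Borel measure on $E^{\infty}\subseteq\partial E$. The paper handles this by realizing $\partial E=\varprojlim_{n}\partial E_{n}$, checking that the measures $(\sigma^{n})^{*}\mu_{1}$ on $\partial E_{n}$ are compatible under the bonding maps $\rho_{n}$, and then citing a projective-limit theorem for Radon measures (Theorem~1 in \cite{OO}). Your proposed Riesz route is not wrong, but it is circular as stated: to get a positive linear functional on $C_{c}(\partial E)$ from a set function on cylinders you must already know countable additivity (equivalently, inner regularity) on that algebra, which is precisely what a Kolmogorov/projective-limit argument provides. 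If you want to avoid citing \cite{OO}, the honest substitute is to define the functional directly by $\Lambda(f)=\int_{\partial E_{n}}\tilde f\,\mathrm{d}((\sigma^{n})^{*}\mu_{1})$ once $f$ factors through $p_{n}$, check independence of $n$ via your compatibility identity, and then extend by density; this is essentially reproving the projective-limit theorem in this special case. Either way, the invariance check and the identity $r_{*}\nu=\mu$ then go through exactly as you sketch, and the paper's use of Lemma~\ref{lemmanotE} for invariance matches your final paragraph.
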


We will provide a new proof of this result, in part to handle the more general case of regular measures and in part because our new proof will provide a better understanding of regular invariant measures, which will be important in our later analysis. The first step in this new proof is to decompose the vertex-invariant measures. For the statement of the next Theorem, we remind the reader that the notation $\varphi_{*}\mu$ denotes the push-forward measures of $\mu$ under the map $\varphi$.

\begin{thm} \label{thmuniqueinvdecomp}
Let $E$ be a second countable topological graph and let $T$ denote the map $\mu \mapsto r_{*}(s^{*}(\mu))$ which maps regular Borel measures on $E^{0}$ to Borel measures on $E^{0}$. 

Let $\mu$ be a vertex-invariant measure on $E^{0}$. Then $\mu$ has a unique decomposition $\mu=\mu_{1}+\mu_{2}$ into two vertex-invariant measures $\mu_{1}$ and $\mu_{2}$ where 
$$
T\mu_{1} = \mu_{1} \quad \text{ and } \quad \lim_{n\to \infty} \int_{E^{0}} f \; \mathrm{d} T^{n}\mu_{2} = 0 \quad  \text{ for all $f\in C_{c}(E^{0})$.}
$$
\end{thm}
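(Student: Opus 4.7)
The plan is to read vertex-invariance as saying that the linear map $T$ decreases $\mu$: for every $f \in C_{c}(E^{0})_{+}$,
$$
\int_{E^{0}} f \, dT\mu = \int_{E^{1}} f \circ r \, d s^{*}\mu \leq \int_{E^{0}} f \, d\mu,
$$
with equality whenever $\supp(f) \subseteq E^{0}_{reg}$. The operator $T$ is linear and order preserving on regular Borel measures, and $T\mu$ is itself regular on the second countable space $E^{0}$ because it is dominated by $\mu$ and hence finite on compact sets. Thus the iterates $T^{n}\mu$ form a non-increasing sequence of regular Borel measures, all bounded above by $\mu$. It is convenient to introduce the transposed operator $S \colon C_{c}(E^{0}) \to C_{c}(E^{0})$ given by $Sf(v) = \sum_{e \in s^{-1}(v)} f(r(e))$ (well-defined by Lemma 2.5 of \cite{S}), which satisfies $\int f \, dT\mu = \int Sf \, d\mu$.

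I would then define $\mu_{1}$ by $I(f) := \lim_{n \to \infty} \int f \, dT^{n}\mu$ for $f \in C_{c}(E^{0})_{+}$, which exists by monotonicity. Extending $I$ to a positive linear functional on $C_{c}(E^{0})$ and invoking Riesz representation yields a regular Borel measure $\mu_{1}$ with $\int f \, d\mu_{1} = I(f)$. Because $S$ preserves $C_{c}(E^{0})$, the chain
$$
\int f \, dT\mu_{1} = \int Sf \, d\mu_{1} = \lim_{n} \int Sf \, dT^{n}\mu = \lim_{n} \int f \, dT^{n+1}\mu = \int f \, d\mu_{1}
$$
gives $T\mu_{1} = \mu_{1}$, so in particular $\mu_{1}$ is vertex-invariant. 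Setting $\mu_{2} := \mu - \mu_{1}$, positivity follows from $\mu_{1} \leq \mu$, and subtracting the vertex-invariance (in)equalities for $\mu_{1}$ from those for $\mu$ shows that $\mu_{2}$ is vertex-invariant. The identity $T^{n}\mu_{2} = T^{n}\mu - \mu_{1}$ then yields $\int f \, dT^{n}\mu_{2} \to 0$ for every $f \in C_{c}(E^{0})$.

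For uniqueness, given another decomposition $\mu = \mu_{1}' + \mu_{2}'$ of the stated form, linearity of $T$ and invariance of $\mu_{1}'$ give $T^{n}\mu = \mu_{1}' + T^{n}\mu_{2}'$, and passing to the limit forces $\int f \, d\mu_{1} = \int f \, d\mu_{1}'$ for all $f \in C_{c}(E^{0})$; Riesz then yields $\mu_{1}' = \mu_{1}$. The one genuinely delicate point is that $r$ is not assumed proper, so a priori $r_{*}(s^{*}\mu)$ could fail to be locally finite; vertex-invariance supplies exactly the domination needed to keep every measure in sight regular and to legitimise the monotone convergence step.
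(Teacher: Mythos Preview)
Your argument is correct and follows essentially the same route as the paper: define $\mu_{1}$ as the pointwise limit of the decreasing sequence $T^{n}\mu$ on $C_{c}(E^{0})$, verify $T\mu_{1}=\mu_{1}$, set $\mu_{2}=\mu-\mu_{1}$, and read off the required properties. Your use of the transpose operator $S$ and of the monotonicity of $T$ streamlines the paper's explicit induction and dominated-convergence step, and you additionally supply the uniqueness argument the paper leaves to the reader.
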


\begin{proof}
Assume that $f\in C_{c}(E^{0})$ is positive and $\mu$ is vertex-invariant. We then claim that
$$
\int_{E^{0}} f \; \mathrm{d} (T^{n}\mu) 
\leq \int_{E^{0}} f \; \mathrm{d} (T^{n-1}\mu)  
$$
for all $n\in \mathbb{N}$. By assumption it is true for $n=1$. Assume that it holds true for all $k\leq n$ for some $n\in \mathbb{N}$. Then $T^{n} \mu$ and $T^{n-1} \mu$ are regular and $T^{n} \mu \leq T^{n-1} \mu$, so
\begin{align*}
\int_{E^{0}} f \; \mathrm{d} (T^{n+1}\mu)
&= \int_{E^{0}} \sum_{e\in s^{-1}(v)} f\circ r(e) \; \mathrm{d} (T^{n}\mu)(v)
\leq \int_{E^{0}} \sum_{e\in s^{-1}(v)} f\circ r(e) \; \mathrm{d} (T^{n-1}\mu) (v)\\
&= \int_{E^{0}} f \; \mathrm{d} (T^{n}\mu) \; ,
\end{align*}
proving the claim. It follows, that $T^{n} \mu$ is a regular measure for all $n\in \mathbb{N}_{0}$ when $\mu$ is a vertex-invariant measure. As a consequence, we can define a regular measure $\mu_{1}$ with $\mu_{1} \leq \mu$ by setting
$$
\int_{E^{0}} f \; \mathrm{d}\mu_{1} = \lim_{n\to \infty} \int_{E^{0}} f \; \mathrm{d}(T^{n}\mu) \quad \text{ for $f\in C_{c}(E^{0})$.}
$$
Since $\mu_{1} \leq \mu$ and 
$$
\int_{E^{0}} \sum_{e\in s^{-1}(v)} f \circ r(e) \; \mathrm{d} \mu <\infty
$$ 
for $f\in C_{c}(E^{0})$ one can use dominated convergence to prove
$$
\int_{E^{0}} \sum_{e\in s^{-1}(v)} f \circ r(e) \; \mathrm{d}\mu_{1} = \lim_{n\to \infty} \int_{E^{0}} \sum_{e\in s^{-1}(v)} f \circ r(e) \; \mathrm{d}(T^{n}\mu)
$$
for $f\in C_{c}(E^{0})$. This implies that $T\mu_{1} = \mu_{1}$. Set $\mu_{2}=\mu-\mu_{1}$, it is then straightforward to check that $\mu_{1}$ and $\mu_{2}$ are vertex-invariant. For any function $f\in C_{c}(E^{0})$ and $\varepsilon >0$ there then exists a $N\in \mathbb{N}$ such that 
$$
\left | \int_{E^{0}} f  \; \mathrm{d}(T^{n}\mu_{2}) \right |=
\left | \int_{E^{0}} f  \; \mathrm{d}(T^{n}\mu) - \int_{E^{0}} f  \; \mathrm{d}\mu_{1} \right | \leq \varepsilon \quad \text{ for all $n\geq N$.} 
$$
This proves that the decomposition exists. We leave it to the reader to check that the decomposition is unique.
\end{proof}

Using Theorem \ref{thmuniqueinvdecomp} we can now provide a new proof of Proposition \ref{prop44schaf}.

\begin{proof}[proof of Proposition \ref{prop44schaf}]
Assume first that $\mu$ is a  vertex-invariant measure satisfying that 
\begin{equation} \label{eeefoerst}
\lim_{n\to \infty} \int_{E^{0}} f \; \mathrm{d} T^{n}\mu =0 \text{ for all $f\in C_{c}(E^{0})$}. 
\end{equation}
We define a regular positive measure $\eta$ on $E^{0}$ by
$$
\int_{E^{0}} f\; \mathrm{d} \eta :=
\int_{E^{0}} f \; \mathrm{d} \mu  - \int_{E^{1}} f \circ r \; \mathrm{d} s^{*} \mu \quad \text{ for $f\in C_{c}(E^{0})$.}
$$
Since $E_{sng}^{n}$ with the subspace topology from $E^{n}$ is homeomorphic with the Borel subset $E_{sng}^{n}$ of $\partial E$. Let $s_{n}: E^{n} \to E^{0}$ be the source map, since it is a local homeomorphism we can consider $s_{n}^{*} \eta$ as a Borel measure on $\partial E$ supported on $E^{n}_{sng}$. We now claim that
$$
\nu:=\sum_{i=0}^{\infty} s_{n}^{*} \eta
$$
is a regular invariant measure on $\partial E$ such that $r_{*}\nu = \mu $. Let $f\in C_{c}(E^{0})$ be positive. The map $f\circ r$ is not necessarily compactly supported, but by approximating it by compactly supported functions from below and using monotone convergence, we get for any $i\in \mathbb{N}_{0}$ that
$$
\int_{E^{0}} \sum_{\alpha \in s_{i}^{-1}(v) } f\circ r(\alpha) \; \mathrm{d} T\mu(v)=
\int_{E^{0}} \sum_{e\in s^{-1}(v)}\sum_{\alpha \in s_{i}^{-1}(r(e)) } f\circ r(\alpha) \; \mathrm{d} \mu(v)=
\int_{E^{0}} \sum_{\beta \in s_{i+1}^{-1}(v) } f\circ r(\beta) \; \mathrm{d} \mu(v) \; , 
$$
from which it follows that 
\begin{align*}
\int_{\partial E} f\circ r \; \mathrm{d} s_{i}^{*} \eta
&=\int_{E^{0}} \sum_{\alpha \in s_{i}^{-1}(v) } f\circ r(\alpha) \; \mathrm{d} \eta(v) \\
&= \int_{E^{0}} \sum_{\alpha \in s_{i}^{-1}(v) } f\circ r(\alpha) \; \mathrm{d} \mu(v)
-\int_{E^{0}} \sum_{\alpha \in s_{i}^{-1}(v) } f\circ r(\alpha) \; \mathrm{d} T\mu(v) \\
&= \int_{E^{0}} f \; \mathrm{d} T^{i}\mu
- \int_{E^{0}} f \; \mathrm{d} T^{i+1}\mu \; . 
\end{align*}
By assumption this implies that
$$
\int_{\partial E} f\circ r \; \mathrm{d} \nu = \int_{E^{0}} f \; \mathrm{d} \mu -\lim_{n\to \infty} \int_{E^{0}} f \; \mathrm{d} T^{n} \mu = \int_{E^{0}} f\; \mathrm{d} \mu \; .
$$
If $S\subseteq E^{0}$ it is easy to check that $1_{S} \circ r = 1_{Z(S)}$, and since $\partial E$ can be covered by open sets $Z(U)$ with $U\subseteq E^{0}$ open and $\mu(U)<\infty$, this equality implies that $\nu$ is a regular measure. Since $s_{i}^{*} \eta$ is concentrated on $E^{i}_{sng}$, we first see that if $A\subseteq E^{\infty}$ is Borel then $\nu(A)=0=\nu(\sigma(A))$. By second countability, it therefore suffices to check that $\nu(A)=\nu(\sigma(A))$ for a Borel set $A \subseteq E^{i}_{sng}$ with $s_{i}$ and $\sigma$ injective on $A$, yet for such a set we have that
$$
\nu(A) = s_{i}^{*}\eta(A) = \eta(s_{i}(A)) =\eta(s_{i-1}(\sigma(A)))= s_{i-1}^{*}\eta( \sigma(A))
=\nu(\sigma(A)) \; ,
$$
which proves the Proposition for measures $\mu$ satisfying \eqref{eeefoerst}.

\bigskip

Assume now that $\mu$ satisfies that $T\mu = \mu$. Following Section 3 in \cite{S} we define $\partial E_{n} := E_{sng}^{0} \cup \cdots \cup E_{sng}^{n-1}\cup E^{n}$ and set 
$$
Z_{n}(S)=\{ \alpha \in \partial E_{n} \; | \; r(\alpha)\in S \text{ or } \alpha_{1} \cdots \alpha_{k} \in S \text{ for some $1\leq k \leq n$} \} 
$$
where $S \subseteq E^{*}$. Then $\partial E_{n}$ equipped with the topology generated by the basis $Z_{n}(U)\setminus Z_{n}(K)$ with $U\subseteq E^{*}$ open and $K \subseteq E^{*}$ compact satisfies that $\rho_{n}:\partial E_{n} \to \partial E_{n-1}$ defined by 
$$
\rho(\alpha_{1} \cdots \alpha_{k})=
\begin{cases}
\alpha_{1} \cdots \alpha_{n-1} & \text{if } k=n \; , \\
\alpha_{1} \cdots \alpha_{k} & \text{if } k<n \; . 
\end{cases}
$$
is a proper, continuous and surjective map, and that we get a homeomorphism
$$
\partial E = \varprojlim_{n} (\partial E_{n} , \rho_{n}) \; ,
$$
c.f. Proposition 3.3 in \cite{S}. Since $\sigma^{n}:\partial E_{n} \cap E^{n} \to E^{0}$ is a local homeomorphism by Proposition 3.5 in \cite{S}, we can define a Borel measure $(\sigma^{n})^{*}(\mu)$ on $\partial E_{n}$ which is concentrated on $E^{n}$.

\bigskip 

\emph{Claim: $r_{*}(\sigma^{n})^{*} \mu = \mu$ for all $n\in \mathbb{N}$.} Assume that $f\in C_{c}(E^{0})$ is positive and $n\in \mathbb{N}$. Then
\begin{align*}
\int_{E^{0}} f \; \mathrm{d} r_{*}(\sigma^{n+1})^{*} \mu 
&= \int_{\partial E_{n+1}} f\circ r \; \mathrm{d} (\sigma^{n+1})^{*} \mu 
= \int_{E^{0}} \sum_{\alpha\in (\sigma^{n+1})^{-1} (v) } f\circ r(\alpha) \; \mathrm{d}  \mu (v)  \\
&= \int_{E^{0}} \sum_{e \in s^{-1}(v)} \sum_{\alpha\in (\sigma^{n})^{-1} (r(e)) } f\circ r(\alpha e) \; \mathrm{d}  \mu (v)\\
&= \int_{E^{0}}  \sum_{\alpha\in (\sigma^{n})^{-1} (v) } f\circ r(\alpha) \; \mathrm{d}  T\mu (v) \\
&= \int_{E^{0}} f \; \mathrm{d} r_{*}(\sigma^{n})^{*} \mu \; ,
\end{align*}
since $T\mu = \mu$. This proves the claim.

\bigskip

Using the claim we can now see that for $U\subseteq E^{0}$ open with compact closure then 
$$
(\sigma^{n})^{*}(\mu)(Z_{n}(U)) = r_{*}(\sigma^{n})^{*}(\mu)(U)=\mu(U)<\infty \; ,
$$ 
and hence $(\sigma^{n})^{*}(\mu)$ is a regular measure on $\partial E_{n}$. For $f\in C_{c}(\partial E_{n-1})$ we get
\begin{align*}
\int_{\partial E_{n}} f\circ \rho_{n} \;  \mathrm{d} (\sigma^{n})^{*} \mu
&= \int_{E^{0}} \sum_{\alpha \in (\sigma^{n})^{-1}(v)} f\circ \rho_{n}(\alpha) \;  \mathrm{d} \mu(v)
= \int_{E^{0}} \sum_{e\in s^{-1}( v)} \sum_{\beta \in (\sigma^{n-1})^{-1}(r(e))} f(\beta) \;  \mathrm{d} \mu(v)\\
&=\int_{E^{0}}  \sum_{\beta \in (\sigma^{n-1})^{-1}(v)} f(\beta) \;  \mathrm{d} T\mu(v)
= \int_{\partial E_{n-1}} f \; \mathrm{d} (\sigma^{n-1})^{*}\mu \; ,
\end{align*}
which proves that $(\rho_{n})_{*} (\sigma^{n})^{*} \mu = (\sigma^{n-1})^{*} \mu$. By Theorem 1 in \cite{OO} there therefore exists a unique regular measure $\nu$ on $\partial E$ such that $\nu \circ p_{n}^{-1} = (\sigma^{n})^{*} \mu$, where $p_{n}$ is the projection from $\partial E$ to $\partial E_{n}$.

To see that $\nu$ is invariant, let $O \subseteq E^{*}$ be an open set such that $\sigma$ is injective on $Z(O) \subseteq \partial E \setminus E^{0}$. Pick a $k\in \mathbb{N}$ and open subsets $Z(U_{i}) \setminus Z(K_{i}) \subseteq Z(O)$ for $i=1, \dots, k$ as in Lemma \ref{lemmanotE}. For sufficiently big $n$ then $p_{n}^{-1}(Z_{n}(U_{i}) \setminus Z_{n}(K_{i})) = Z(U_{i}) \setminus Z(K_{i})$ for all $1\leq i \leq k$, so we have that
\begin{equation} \label{eeeee1}
\nu\left(\bigcap_{i=1}^{k} Z(U_{i}) \setminus Z(K_{i}) \right) = (\sigma^{n})^{*} \mu \left(\bigcap_{i=1}^{k} Z_{n}(U_{i}) \setminus Z_{n}(K_{i})\right)
\end{equation}
and using that $\sigma$ is injective on $Z(O)$ and \eqref{item3E} in Lemma \ref{lemmanotE}, we get that
\begin{align} \label{eeeee2}
\nonumber \nu\left[\sigma(\bigcap_{i=1}^{k} Z(U_{i}) \setminus Z(K_{i}))\right] &=\nu\left[\bigcap_{i=1}^{k} Z(\sigma(U_{i})) \setminus Z(\sigma(K_{i}))\right]   \\
&=(\sigma^{n-1})^{*} \mu \left(\bigcap_{i=1}^{k} Z_{n-1}(\sigma(U_{i})) \setminus Z_{n-1}(\sigma(K_{i}))\right)\\
\nonumber&= (\sigma^{n-1})^{*} \mu  \left( \sigma\left(\bigcap_{i=1}^{k} Z_{n}(U_{i}) \setminus Z_{n}(K_{i})\right) \right)
\; .
\end{align}
If $\sigma$ is injective on a set $A \subseteq \partial E_{n} \setminus E^{0}$, then by second countability we can write 
$$
A=A_{0}\sqcup \bigsqcup_{k\in \mathbb{N}} A_{k}
$$ 
such that $\sigma^{n}$ is injective on $A_{k}$ for $k\geq 1$ and $A_{0} \cap E^{n}=\emptyset$, and hence
$$
(\sigma^{n})^{*}\mu(A) = \sum_{k\in \mathbb{N} } \mu(\sigma^{n}(A_{k}))
= \sum_{k\in \mathbb{N} } (\sigma^{n-1})^{*}\mu(\sigma(A_{k}))
=(\sigma^{n-1})^{*}\mu(\sigma(A)) \; .
$$
Using this observation and \eqref{eeeee2} we see that
$$
\nu\left[\sigma(\bigcap_{i=1}^{k} Z(U_{i}) \setminus Z(K_{i}))\right]
= 
(\sigma^{n})^{*} \mu  \left( \bigcap_{i=1}^{k} Z_{n}(U_{i}) \setminus Z_{n}(K_{i})\right)
=\nu\left[\bigcap_{i=1}^{k} Z(U_{i}) \setminus Z(K_{i})\right]
\; ,
$$
and since intersections of such sets from Lemma \ref{lemmanotE} are a $\pi$-system we have that $\nu$ is $\sigma$-invariant, c.f. Corollary 1.6.4 in \cite{Cohn}.

By linearity and Theorem \ref{thmuniqueinvdecomp} we have now proven Proposition \ref{prop44schaf}.
\end{proof}

\begin{remark}\label{remarkdecomp}
Notice that the invariant measure associated to a vertex-invariant measure $\mu$ with $T\mu=\mu$ is concentrated on $E^{\infty}$ while an invariant measure associated to a vertex-invariant measure $\mu$ with 
$$
\lim_{n\to \infty} \int_{E^{0}} f \; \mathrm{d} T^{n}\mu=0 \quad \text{ for $f\in C_{c}(E^{0})$}
$$
is concentrated on $E^{*}_{sng}$.
\end{remark}

\begin{remark} \label{remarkintegral}
The motivation for providing a new proof is mainly to deduce remark \ref{remarkdecomp}, but there is also a minor detail in the original proof in \cite{S} that is unclear. In the formula (6) of Proposition 4.4 in \cite{S} a function $f\in C_{c}(\partial E_{n+1})$ is integrated over $\partial E_{n+1}\setminus E^{0}$ for a regular measure on $\partial E_{n+1}\setminus E^{0}$. A priori this is not necessarily finite: If $m$ is a regular measure on $U$, and $U$ is an open set in a space $X$, then $m$ might not be a regular measure on $X$. As an example, consider the topological graph $E$ with $E^{0}=[0,1]$, $E^{1}=(0,1)$ and $r$ and $s$ the inclusion maps. It is then clear that $\partial E_{1}=[0,1]$ with its normal topology, and that $\partial E_{1}\setminus E_{0}=(0,1)$. Consider the measure 
$$
m=\sum_{n\geq 2} \delta_{1/n}
$$  
on $(0,1)$ where $\delta_{1/n}$ is the Dirac-measure at $1/n$. Then $m$ is a regular measure on $(0,1)$ since it is finite on compact sets, yet it is not a regular measure on $[0,1]$. In the the proof of Proposition 4.4 in \cite{S} one can remedy this fact by including a regularity statement in the inductive proof of the formula (7).
\end{remark}

It is proven in Theorem 6.3 in \cite{S} that there exists an affine homeomorphisms between the vertex-invariant probability measures and the gauge-invariant tracial states on $C^{*}(E)$. Since a tracial state $\omega$ is gauge-invariant if and only if $\omega=\omega \circ P$ with $P:C^{*}(E)\to C_{0}(\partial E)$ the canonical conditional expectation, one gets an affine bijection between the vertex-invariant probability measures and the invariant probability measures. For regular invariant measures, we provide a direct proof of this fact for second countable graphs.

\begin{prop} \label{propinvmeasurebij}
Let $E=(E^{0}, E^{1}, r, s)$ be a second countable topological graph. The map $\nu \mapsto r_{*} \nu$ defines an affine bijection between the set of invariant measures on $\partial E$ and the set of vertex-invariant measures on $E^{0}$.
\end{prop}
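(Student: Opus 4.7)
The plan is to establish three things: that $r_{*}\nu$ is regular and vertex-invariant (well-definedness), that every vertex-invariant measure arises in this way (surjectivity), and that $\nu$ is recovered uniquely from $r_{*}\nu$ (injectivity). Affinity of the map is immediate from the definition of the pushforward, and surjectivity is precisely the content of Proposition \ref{prop44schaf}, so the real content lies in the first and third items.

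For well-definedness, regularity of $r_{*}\nu$ follows from the observation that for any compact $K\subseteq E^{0}$ one has $r^{-1}(K)=Z(K)$, which is compact in $\partial E$ and so has finite $\nu$-measure. To verify vertex-invariance I would evaluate $\int_{E^{1}} f\circ r\; \mathrm{d} s^{*}(r_{*}\nu)$ for positive $f\in C_{c}(E^{0})$; unfolding the pushforward and the definition of $s^{*}$, this becomes $\int_{\partial E}\sum_{e\in s^{-1}(r(\alpha))} f(r(e))\; \mathrm{d}\nu(\alpha)$. Identifying $\sigma^{-1}(\alpha)$ with $\{e\alpha : e\in E^{1},\; s(e)=r(\alpha)\}$, the inner sum becomes $\sum_{\beta\in\sigma^{-1}(\alpha)} f(r(\beta))$, and the invariance condition \eqref{eqinvmeasure} in its functional form collapses the whole integral to $\int_{\partial E\setminus E^{0}} f\circ r\; \mathrm{d}\nu$. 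This is at most $\int f\; \mathrm{d}(r_{*}\nu)$, with equality when $\supp(f)\subseteq E^{0}_{reg}$ since $f\circ r$ vanishes on $E^{0}\cap\partial E = E^{0}_{sng}$.

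For injectivity, suppose $\nu$ is invariant with $r_{*}\nu=\mu$. By Lemma \ref{lemma11} both $E^{\infty}$ and $E^{*}_{sng}$ are invariant Borel subsets of $\partial E$, so the restrictions $\nu|_{E^{\infty}}$ and $\nu|_{E^{*}_{sng}}$ are themselves invariant measures. The plan is first to show that their pushforwards $\mu_{inf}:=r_{*}(\nu|_{E^{\infty}})$ and $\mu_{fin}:=r_{*}(\nu|_{E^{*}_{sng}})$ realise the Riesz-type decomposition of $\mu$ from Theorem \ref{thmuniqueinvdecomp}. The same invariance computation as above gives $T\mu_{inf}=\mu_{inf}$, while dominated convergence on the decreasing sets $\{\alpha\in E^{*}_{sng}:|\alpha|\geq n\}$ (using that $f\circ r$ is $\nu$-integrable for $f\in C_{c}(E^{0})$) yields $\int f\; \mathrm{d}(T^{n}\mu_{fin})\to 0$. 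The uniqueness in Theorem \ref{thmuniqueinvdecomp} then forces $\mu_{fin}$ and $\mu_{inf}$ to be determined by $\mu$ alone.

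It remains to recover $\nu|_{E^{*}_{sng}}$ from $\mu_{fin}$ and $\nu|_{E^{\infty}}$ from $\mu_{inf}$. Applying invariance to $1_{U}\circ r$ on $\partial E\setminus E^{0}$ gives $\nu(Z(U)\cap(\partial E\setminus E^{0})) = T\mu(U)$ for open $U\subseteq E^{0}$, which combined with $\nu(Z(U))=\mu(U)$ forces $\nu|_{E^{0}_{sng}} = \mu_{fin}-T\mu_{fin}$. Invariance then gives $\nu(A)=\nu(\sigma^{n}(A))$ whenever $A\subseteq E^{n}_{sng}$ is Borel with $s_{n}$ injective on $A$, reducing $\nu|_{E^{n}_{sng}}$ to a pushforward from $E^{0}_{sng}$, and second countability covers $E^{n}_{sng}$ by countably many such pieces. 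For the infinite part, the cylinders $Z(V)\cap E^{\infty}$ with $V\subseteq E^{l}$ open and $s_{l}$ injective on $V$ form a $\pi$-system generating the Borel $\sigma$-algebra on $E^{\infty}$, and invariance together with $\sigma^{l}(Z(V))=Z(s_{l}(V))$ yields $\nu(Z(V)\cap E^{\infty}) = \mu_{inf}(s_{l}(V))$. The main obstacle is organising these measure-theoretic covering arguments in a $\sigma$-finite way; Lemma \ref{lemmanotE} is tailor-made for refining general basic neighbourhoods into pieces on which $\sigma^{l}$ is injective, which should make the $\pi$--$\lambda$ uniqueness step go through.
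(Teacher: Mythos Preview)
Your well-definedness and surjectivity arguments match the paper's proof essentially verbatim. The injectivity argument, however, takes a different and considerably more elaborate route than the paper.

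The paper proves injectivity in one stroke, without invoking Theorem \ref{thmuniqueinvdecomp} or splitting into $E^{\infty}$ and $E^{*}_{sng}$. If $r_{*}\nu_{1}=r_{*}\nu_{2}$, then for any Borel $V\subseteq E^{n}$ with $s_{n}$ injective on $V$ one has, by invariance, $\nu_{i}(Z(V))=\nu_{i}(\sigma^{n}(Z(V)))=\nu_{i}(Z(s_{n}(V)))=r_{*}\nu_{i}(s_{n}(V))$, so $\nu_{1}(Z(V))=\nu_{2}(Z(V))$. These cylinders (for varying $n$) form a $\pi$-system generating the Borel $\sigma$-algebra of $\partial E$ and contain sets of finite measure covering $\partial E$, so $\nu_{1}=\nu_{2}$. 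This is exactly the computation you carry out on $E^{\infty}$, but applied directly on all of $\partial E$; the detour through the Riesz decomposition, the separate reconstruction of $\nu|_{E^{*}_{sng}}$ via $\mu_{fin}-T\mu_{fin}$, and the appeal to Lemma \ref{lemmanotE} are all unnecessary.

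Your route is not wrong --- the decomposition argument goes through, and it has the side benefit of previewing Proposition \ref{propuniqueinvdecomp} --- but it imports machinery (Theorem \ref{thmuniqueinvdecomp}) that the paper deliberately keeps independent of this proposition, and it obscures the simple reason injectivity holds: invariance reduces the value of $\nu$ on any cylinder to the value of $r_{*}\nu$ on a subset of $E^{0}$.
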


\begin{proof}
Let us first check that the map is well defined, i.e. that $r_{*} \nu$ satisfies Definition \ref{eqvertexinv}. Let therefore $\nu$ be a regular measure on $\partial E$. If $K\subseteq E^{0}$ is compact then $Z(K)$ is compact in $\partial E$, and hence $\nu(r^{-1}(K))=\nu(Z(K))<\infty$. So $r_{*}\nu$ is a regular measure. Let $f\in C_{c}(E^{0})$ be positive, then we get
\begin{align} \label{eqanyinv}
\int_{E^{1}} f \circ r \; \mathrm{d} s^{*}(r_{*}\nu) 
&= \int_{E^{0}} \sum_{e\in s^{-1}(v) } f \circ r (e)\; \mathrm{d} r_{*}\nu (v)
= \int_{\partial E} \sum_{e\in s^{-1}(r(\alpha)) } f \circ r (e)\; \mathrm{d} \nu (\alpha) \nonumber\\ 
&= \int_{\partial E} \sum_{\beta\in \sigma^{-1}(\alpha) } f \circ r (\beta)\; \mathrm{d} \nu (\alpha) \; .
\end{align}
When $\nu$ is invariant on $\partial E$ we know from \eqref{eqinvmeasure} that $\sigma^{*}\nu(A)=\nu(\sigma(A))=\nu(A)$ whenever $\sigma$ is injective on $A$, and hence it is true for all $A\subseteq \partial E\setminus E^{0}$. Since $f\circ r$ might not be zero on $E^{0}$, we get that
$$
\int_{\partial E} \sum_{\beta\in \sigma^{-1}(\alpha) } f \circ r (\beta)\; \mathrm{d} \nu (\alpha)=
\int_{\partial E\setminus E^{0}}  f \circ r \; \mathrm{d} \sigma^{*}\nu 
\leq \int_{\partial E }  f \circ r \; \mathrm{d} \nu 
= \int_{E^{0} }  f  \; \mathrm{d} (r_{*}\nu) \; ,
$$
with equality if $f\circ r$ is zero on $E^{0}_{sng}$. If $\text{supp}(f) \subseteq E_{reg}^{0}$ then $f\circ r$ is zero on $E^{0}_{sng}$, which proves that the map $\nu \mapsto r_{*} \nu$ is well defined. That it is surjective follows from Proposition \ref{prop44schaf}. To check injectivity, assume that $r_{*}\nu_{1} = r_{*}\nu_{2}$ for two invariant measures $\nu_{1}$ and $\nu_{2}$. If $U \subseteq E^{0}$ is Borel we have by definition of $Z(U)$ that
$$
\nu_{1}(Z(U)) = \nu_{1} (r^{-1}(U)) 
= \nu_{2} (r^{-1}(U))
= \nu_{2} (Z(U)) \; .
$$
For a given $n\in \mathbb{N}$ and a Borel set $V \subseteq E^{n}$ satisfying that $\sigma^{n} : V \to \sigma^{n}(V) \subseteq E^{0}$ is injective, we have by invariance of the two measures that
$$
\nu_{1}(Z(V)) = \nu_{1}( \sigma^{n}(Z(V)))
=\nu_{1}( Z(\sigma^{n}(V)))
=\nu_{2}( Z(\sigma^{n}(V))) = \nu_{2}(Z(V)) \; .
$$
If $V_{n} \subseteq E^{n}$ and $V_{m} \subseteq E^{m}$ are two such sets with $m\geq n$ then the set 
$$
V'=\{\alpha \in E^{m} \; | \; \alpha =v\alpha ' \text{ with } v\in V_{n} \}
$$
is open, and since
$$
Z(V_{n})\cap Z(V_{m}) = Z(V_{m}\cap V' ) \; ,
$$
such sets are a $\pi$-system that generates the Borel $\sigma$-algebra. It follows that $\nu_{1} = \nu_{2}$, which finishes the proof.
\end{proof}

\begin{example} \label{ex25}
Assume that $E$ is a second countable discrete topological graph, i.e. that it is a countable directed graph. A measure $\mu$ on $E^{0}$ is then vertex-invariant if and only if it satisfies Definition \ref{eqvertexinv} for all functions on the form $f=1_{\{v\}}$ with $v\in E^{0}$, which is true if and only if
\begin{align*}
\mu(\{v\}) &\geq \int_{E^{1}} 1_{v} \circ r \; \mathrm{d} s^{*}\mu
= \int_{E^{0}} \sum_{e\in s^{-1}(w)} 1_{v} \circ r(e) \; \mathrm{d} \mu(w)
= \sum_{w\in E^{0}} \mu(\{ w \}) \sum_{e\in s^{-1}(w)} 1_{v} \circ r(e) \\
&= \sum_{w\in E^{0}} \mu(\{ w \}) |v E^{1} w|  \; ,
\end{align*}
with equality when $v$ is a regular vertex, i.e. when $0<|r^{-1}(v)| < \infty$. If we interpret $\mu$ as a vector $\mu: E^{0} \to [0, \infty[$ and $A$ is the vertex matrix $A(v,w)=|vE^{1}w|$ for $v,w\in E^{0}$, we get that $\mu$ is vertex-invariant if and only if $(A\mu)_{v} \leq \mu_{v}$ with equality when $v$ is regular. It is a well established fact that invariant measures for directed graphs are in a bijective correspondence with exactly that class of vectors, see e.g. Theorem 2.7 in  \cite{T2} and Section 3 in \cite{Tomforde}. So in the discrete case Proposition \ref{propinvmeasurebij} is well known.

For discrete graphs it is customary to use the Riesz decomposition of the these vectors to analyse them, c.f. e.g.  \cite{T2}. This approach provides the motivation behind Theorem \ref{thmuniqueinvdecomp} and the new proof of Proposition \ref{prop44schaf}.
\end{example}

Using our new proof of Proposition \ref{prop44schaf} we get a decomposition of invariant measures corresponding to the decomposition in Theorem \ref{thmuniqueinvdecomp}.

\begin{prop} \label{propuniqueinvdecomp}
Let $E$ be a second countable topological graph. Let $\nu$ be an invariant measure on $\partial E$, and let $\mu = r_{*}\nu$ be the associated vertex-invariant measure. Then
\begin{enumerate}
\item \label{enuconc1} $\nu$ is concentrated on $E^{\infty}$ if and only if $T\mu = \mu$, and
\item \label{enuconc2} $\nu$ is concentrated on $E^{*}_{sng}$ if and only if $\lim_{n} \int_{E^{0}} f \; \mathrm{d} T^{n}\mu =0$ for all $f\in C_{c}(E^{0})$.
\end{enumerate}
\end{prop}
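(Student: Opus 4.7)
The plan is to combine the unique decomposition of vertex-invariant measures from Theorem~\ref{thmuniqueinvdecomp} with the injectivity of the map $\nu \mapsto r_{*}\nu$ established in Proposition~\ref{propinvmeasurebij}.

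First I would decompose $\mu = \mu_{1} + \mu_{2}$ as in Theorem~\ref{thmuniqueinvdecomp}, so that $T\mu_{1} = \mu_{1}$ and $\int_{E^{0}} f \, \mathrm{d} T^{n}\mu_{2} \to 0$ for every positive $f\in C_{c}(E^{0})$. Applying Proposition~\ref{prop44schaf} separately to $\mu_{1}$ and $\mu_{2}$ produces invariant measures $\nu_{1}$ and $\nu_{2}$ on $\partial E$ with $r_{*}\nu_{i} = \mu_{i}$. Inspecting the two constructions given in the new proof of Proposition~\ref{prop44schaf}---as summarised in Remark~\ref{remarkdecomp}---one sees that $\nu_{2}=\sum_{n} s_{n}^{*}\eta$ is concentrated on $\bigsqcup_{n} E^{n}_{sng}= E^{*}_{sng}$, whereas $\nu_{1}$ is obtained from the projective system $(\sigma^{n})^{*}\mu_{1}$, each factor of which is supported on $E^{n}\subseteq \partial E_{n}$; since $p_{n}^{-1}(E^{k}_{sng})=E^{k}_{sng}$ for $n>k$, this forces $\nu_{1}(E^{k}_{sng})=0$ for every $k\in\mathbb{N}_{0}$, so that $\nu_{1}$ is concentrated on $E^{\infty}$.

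Since $r_{*}(\nu_{1}+\nu_{2}) = \mu_{1}+\mu_{2} = \mu = r_{*}\nu$, the injectivity in Proposition~\ref{propinvmeasurebij} gives $\nu = \nu_{1}+\nu_{2}$. With this decomposition in hand, both equivalences follow at once from the uniqueness clause of Theorem~\ref{thmuniqueinvdecomp}. For part~\ref{enuconc1}: if $T\mu=\mu$, uniqueness forces $\mu_{2}=0$, hence $\nu_{2}=0$ and $\nu=\nu_{1}$ is concentrated on $E^{\infty}$; conversely, if $\nu(E^{*}_{sng})=0$ then $\nu_{2}(\partial E)=\nu_{2}(E^{*}_{sng})\leq \nu(E^{*}_{sng})=0$, so $\nu_{2}=0$, whence $\mu_{2}=r_{*}\nu_{2}=0$ and $T\mu=T\mu_{1}=\mu_{1}=\mu$. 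Part~\ref{enuconc2} follows by the symmetric argument, using $\nu_{1}(\partial E)=\nu_{1}(E^{\infty})\leq \nu(E^{\infty})$ to pass from $\nu(E^{\infty})=0$ to $\nu_{1}=0$.

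I expect the main obstacle to lie not in this final bookkeeping but in verifying that the two invariant measures produced by Proposition~\ref{prop44schaf} really have the advertised supports on $E^{\infty}$ and $E^{*}_{sng}$ respectively. This is precisely the information supplied by the new proof of Proposition~\ref{prop44schaf}, which produces $\nu_{1}$ and $\nu_{2}$ via two genuinely different constructions; without the explicit formula $\nu_{2}=\sum_{n}s_{n}^{*}\eta$ and the projective-limit description of $\nu_{1}$, one would not be able to separate the two pieces by their supports, and the bijection from Proposition~\ref{propinvmeasurebij} alone would not suffice.
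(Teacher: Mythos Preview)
Your proposal is correct and follows essentially the same approach as the paper: decompose $\mu=\mu_1+\mu_2$ via Theorem~\ref{thmuniqueinvdecomp}, lift each piece to an invariant measure with the support prescribed by Remark~\ref{remarkdecomp}, invoke the injectivity of $\nu\mapsto r_*\nu$ from Proposition~\ref{propinvmeasurebij} to write $\nu=\nu_1+\nu_2$, and read off both equivalences from the uniqueness clause. The paper's proof is terser---it cites Remark~\ref{remarkdecomp} directly for the ``if'' direction and leaves the use of injectivity implicit---but the logic is identical to yours.
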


\begin{proof}
Remark \ref{remarkdecomp} implies the if-implication in both statements. For the only if implication in \eqref{enuconc1}, assume that $\nu$ is concentrated on $E^{\infty}$ and decompose $r_{*}\nu=\mu_{1}+\mu_{2}$ as in Theorem \ref{thmuniqueinvdecomp}. Let $\nu_{i}$ be the invariant measure with $r_{*}\nu_{i}=\mu_{i}$ for $i=1,2$. It follows that $\nu=\nu_{1}+\nu_{2}$ and that $\nu_{2}$ is concentrated on $E^{*}_{sng}$, and hence $\nu_{2}=0$ since $\nu(E^{*}_{sng})=0$. This implies that $\mu_{2}=0$ and proves the only if implication. The only if implication in \eqref{enuconc2} follows similarly.
\end{proof}

It follows from Theorem \ref{thmuniqueinvdecomp} that we have a very natural decomposition of vertex-invariant measures, which corresponds to a very natural decomposition of invariant measures by Proposition \ref{propuniqueinvdecomp}. The following terminology is inspired by the properties of paths in $\partial E$ which the invariant measures are concentrated on, and is in line with the terminology for directed graphs \cite{T2}.

\begin{defn}
We call a vertex-invariant measure $\mu$ on $E^{0}$ a \emph{boundary measure} if 
$$
\lim_{n\to \infty} \int_{E^{0}} f \ \mathrm{d} T^{n}\mu=0 \quad \text{ for all $f\in C_{c}(E^{0})$.}
$$
In accordance with this, we call an invariant measure $\nu$ on $\partial E$ a \emph{boundary measure} if it is concentrated on $E_{sng}^{*}$.

We call a vertex-invariant measure $\mu$ on $E^{0}$ \emph{harmonic} if $T\mu=\mu$. In accordance with this, we call an invariant measure $\nu$ on $\partial E$ \emph{harmonic} if it is concentrated on $E^{\infty}$.
\end{defn}

It follows from \eqref{eqinviserg} that any extremal invariant measures is either a boundary measure or a harmonic measure. We will therefore in the following investigate the boundary measures and the harmonic measures separately.

\section{Boundary measures}\label{subsection22}
We will in the following give a complete description of the extremal boundary measures.

\begin{prop} \label{propsurjbound}
Let $E$ be a second countable topological graph. If $\nu$ is an extremal non-zero  boundary measure on $\partial E$ there exists a vertex $v\in E^{0}_{sng}$ such that for any $w\in E^{0}$ there is an open neighbourhood $V_{w}$ of $w$ in $E^{0}$ with $|V_{w} E^{*} v|<\infty$ and such that $\nu$ is concentrated on the countable set $E^{*} v$.

In particular, this implies that there exists a $r > 0$ such that $\nu$ is on the form
$$
\nu = r \cdot \sum_{\alpha \in E^{*}v} \delta_{\alpha} \; .
$$
\end{prop}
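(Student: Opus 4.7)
The plan is to exploit the equivalence between extremality and ergodicity stated in \eqref{eqinviserg}, together with the family of invariant Borel sets supplied by Lemma \ref{lemma11}. For every Borel $S\subseteq E^0_{sng}$, Lemma \ref{lemma11} makes $E^*_{sng}S$ an invariant Borel subset of $\partial E$, so ergodicity of $\nu$ gives the dichotomy: either $\nu(E^*_{sng}S)=0$, or $\nu((E^*_{sng}S)^c)=0$. Because $\nu$ is a boundary measure, the second alternative is equivalent to $\nu(E^*_{sng}(E^0_{sng}\setminus S))=0$.

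I would then localise this dichotomy to pin down a single vertex. Using second countability, fix a countable basis $\{W_n\}$ of $E^0$, and set $S_n=W_n\cap E^0_{sng}$, $J=\{n:\nu(E^*_{sng}S_n)=0\}$, and $G=E^0_{sng}\setminus\bigcup_{n\in J}S_n$. Countable subadditivity gives $\nu(E^*_{sng}(E^0_{sng}\setminus G))=0$, so $\nu$ is concentrated on $E^*_{sng}G$. If $G$ contained two distinct points $v_1,v_2$, the Hausdorff property of $E^0$ would supply disjoint basic neighbourhoods $W_{n_i}\ni v_i$; since $v_i\in G$ forces $n_i\notin J$, the measure $\nu$ would be concentrated on each of the disjoint sets $E^*_{sng}S_{n_i}$, hence on their empty intersection, giving $\nu=0$, a contradiction. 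So $G=\{v\}$ for a unique $v\in E^0_{sng}$, and $\nu$ is concentrated on $E^*_{sng}\{v\}=E^*v$.

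For the atomic formula, write $E^*v=\bigsqcup_{n\geq 0}s_n^{-1}(v)$ where $s_n:E^n\to E^0$ is the source local homeomorphism; each fibre is a discrete subset of the second countable space $E^n$, hence countable, so $E^*v$ is countable. For $\alpha\in E^*v$ with $|\alpha|=n$, iteratively applying \eqref{eqinvmeasure} to the singleton $\{\alpha\}\subseteq\partial E\setminus E^0$ (on which $\sigma$ is trivially injective) yields $\nu(\{\alpha\})=\nu(\{v\})=:r$. Since $\nu$ is concentrated on the countable set $E^*v$ and $\nu\neq 0$, summing the atoms forces $r>0$ and produces $\nu=r\sum_{\alpha\in E^*v}\delta_\alpha$. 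For the neighbourhood bound, given $w\in E^0$ pick an open $V_w\ni w$ with $\overline{V_w}$ compact; then $Z(\overline{V_w})$ is compact in $\partial E$, so regularity gives $\nu(Z(V_w))<\infty$, and the inclusion $V_wE^*v\subseteq Z(V_w)$, together with every atom having common positive mass $r$, then yields $|V_wE^*v|<\infty$.

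The main obstacle I foresee lies in the localisation step: since $\nu$ may be infinite, one cannot directly compare measure values, so the ergodic dichotomy must be phrased throughout as a null-complement statement; with that phrasing, the Hausdorff-based disjoint-neighbourhood trick produces a clean contradiction and the rest of the argument (atoms of equal mass, countability of $E^*v$, regularity bound) follows without much further resistance.
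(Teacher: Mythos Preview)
Your proof is correct and follows essentially the same strategy as the paper: use ergodicity together with the invariant sets $E^{*}_{sng}S$ from Lemma~\ref{lemma11} to localise the support to a single vertex $v\in E^{0}_{sng}$, then use \eqref{eqinvmeasure} to show all atoms over $E^{*}v$ have common mass and use regularity to extract the neighbourhood finiteness. The only difference is in the localisation step: the paper first traps the support in a compact piece and then uses metrizability to shrink a sequence of Borel sets down to a point, whereas you define $G$ directly via the basis and use the Hausdorff property to rule out two points --- your version is a bit more direct and avoids the metric, but the content is the same.
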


\begin{proof}
Let for each $i \in \mathbb{N}$ the set $U_{i} \subseteq E^{0}$ satisfy that $\overline{U_{i}}$ is compact, $U_{i}$ is open and $E^{0} = \bigcup_{i\in \mathbb{N}} U_{i}$. Define $V_{1}=U_{1}$ and for each $n>1$ define
$$
V_{n} = U_{n} \setminus ( U_{1} \cup \cdots \cup U_{n-1} ) \; .
$$
The sets $\{V_{n}\}_{n\in \mathbb{N}}$ are disjoint with union $E^{0}$ so we get
$$
E^{*}_{sng} = \bigsqcup_{n \in \mathbb{N}} E^{*}_{sng}V_{n} \; .
$$
Since $\nu$ is extremal and hence ergodic, it follows from Lemma \ref{lemma11} that there exists a unique $m \in \mathbb{N}$ such that $\nu((E^{*}_{sng} V_{m} )^{C}) =0$. The relative topology on $\overline{U_{m}}$ makes it a compact second countable Hausdorff space, and hence a metrizable space. Using standard arguments, we can therefore find a sequence $\{W_{i}\}_{i=1}^{\infty}$ of Borel subsets of $\overline{U_{m}}$ with $W_{i+1} \subseteq W_{i}$ such that the diameter of $W_{i}$ is at most $1/i$, and such that $\nu ( (E^{*}_{sng} W_{i} )^{C}) =0$ for all $i$. We now have that
$$
0=  \nu \big ( \bigcup_{i\in \mathbb{N} }(E^{*}_{sng} W_{i})^{C} \big ) 
=\nu \Big ( \big(\bigcap_{i\in \mathbb{N} } E^{*}_{sng} W_{i} \big )^{C} \Big ) 
= \nu \Big (  \Big(E^{*}_{sng} \big(\bigcap_{i\in \mathbb{N} } W_{i} \big) \Big)^{C}\Big )
\; .
$$
Since $\nu \neq 0$ it follows that $\bigcap_{i\in \mathbb{N} } W_{i}= \{v \}$ for some $v\in E^{0}$. Hence we have that $0=\nu((E^{*}_{sng} v)^{C})$, so $v\in E^{0}_{sng}$. Since the topology is second countable and $s$ is a local homeomorphism, the set $E^{*}_{sng} v=E^{*}v$ contains at most countably many elements. Now take a path $\alpha \in E^{*} v$, we then have by \eqref{eqinvmeasure} that $\nu(\{v\})=\nu(\sigma^{|\alpha|}(\{\alpha\}) )=\nu(\{\alpha\})$. This implies that
$$
\nu= \sum_{\alpha \in E^{*}v} \nu(\{\alpha\}) \delta_{\alpha}
= \nu(\{v\}) \sum_{\alpha \in E^{*}v}  \delta_{\alpha} \; ,
$$
where $\nu(\{v\})>0$ by assumption. Since $\nu$ is regular and $Z(K)$ is a compact set for any compact $K\subseteq E^{0}$, we can for any $w\in E^{0}$ find an open neighbourhood $V_{w}$ in $E^{0}$ with $\nu(Z(V_{w})) <\infty$. Since
$$
\nu(Z(V_{w})) = \nu(\{v\}) \sum_{\alpha \in E^{*}v}  \delta_{\alpha} (Z(V_{w})) = \nu(\{v\}) \cdot  | V_{w}  E^{*}v | \; ,
$$
this finishes the proof.
\end{proof} 

Recall for the following theorem, that a ray of measures is a set on the form $\{ r\nu \; | \; r> 0\}$ where $\nu$ is measure.

\begin{thm} \label{thmsingbi}
Let $E$ be a second countable topological graph. There is a bijection between the set of rays of extremal invariant boundary measures on $\partial E$ and the set of vertices $v\in E^{0}_{sng}$ satisfying the following condition: For any $w\in E^{0}$ there exists an open neighbourhood $V_{w} \subseteq E^{0} $ of $w$ satisfying $|V_{w}E^{*}v|< \infty$.

The ray of extremal invariant boundary measures corresponding to such a $v\in E_{sng}^{0}$ is on the form $\{ r \cdot \nu \; |\; r>0\}$ where
$$
\nu = \sum_{\alpha \in E^{*}v} \delta_{\alpha} \; .
$$
\end{thm}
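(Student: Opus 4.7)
My plan is to use Proposition \ref{propsurjbound} to handle the hard direction, namely that every extremal invariant boundary measure arises from such a vertex. That proposition already shows that any extremal non-zero invariant boundary measure $\nu$ has the form $r \cdot \sum_{\alpha \in E^{*}v} \delta_{\alpha}$ for some $v \in E^{0}_{sng}$ satisfying the local finiteness condition. So the assignment sending the ray $\{r \cdot \nu \; | \; r>0\}$ to this $v$ is well-defined at the level of rays. Injectivity is immediate: the vertex $v$ can be recovered from $\nu$ as the unique element of $E^{0}$ lying in $\supp(\nu) = E^{*}v$ (since a length zero path $w$ ends at $v$ iff $w=v$), so two distinct vertices yield measures with distinct supports and hence distinct rays.

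The substantive remaining task is to check that, for every vertex $v \in E^{0}_{sng}$ satisfying the local finiteness condition, the formal sum
\[
\nu_{v} := \sum_{\alpha \in E^{*}v} \delta_{\alpha}
\]
is actually an extremal invariant boundary measure. First I would verify that $\nu_{v}$ is a regular Borel measure: given any point $\beta \in \partial E$, set $w = r(\beta)$ and pick the neighbourhood $V_{w}$ of $w$ provided by the hypothesis; then $Z(V_{w})$ is an open neighbourhood of $\beta$ and $\nu_{v}(Z(V_{w})) = |V_{w}E^{*}v| < \infty$, so $\nu_{v}$ is locally finite, hence regular. That $\nu_{v}$ is a boundary measure is automatic since $E^{*}v \subseteq E^{*}_{sng}$. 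For invariance via \eqref{eqinvmeasure}, if $A \subseteq \partial E \setminus E^{0}$ is Borel with $\sigma$ injective on $A$, then $\sigma$ restricts to a bijection $A \cap E^{*}v \to \sigma(A) \cap E^{*}v$ (using that if $\beta \notin E^{0}$ and $\sigma(\beta) \in E^{*}v$ then $s(\beta) = v$, so $\beta \in E^{*}v$), whence $\nu_{v}(A) = |A \cap E^{*}v| = |\sigma(A) \cap E^{*}v| = \nu_{v}(\sigma(A))$.

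The final step is extremality, which by \eqref{eqinviserg} is equivalent to ergodicity. Here I would take any invariant Borel set $B \subseteq \partial E$ (meaning $\sigma^{-1}(B) = B \setminus E^{0}$) and observe that this forces $\sigma^{-1}(B) \subseteq B$. Either $v \in B$ or $v \in B^{c}$, and $B^{c}$ is also invariant by a direct set-theoretic computation. Assuming $v \in B$, for any $\alpha \in E^{*}v$ with $|\alpha| \geq 1$ one has $\sigma^{|\alpha|}(\alpha) = v \in B$, so iterating $\sigma^{-1}(B) \subseteq B$ yields $\alpha \in B$; hence $E^{*}v \subseteq B$ and $\nu_{v}(B^{c}) = 0$. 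The symmetric argument for $v \in B^{c}$ completes ergodicity.

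There is no real obstacle here beyond bookkeeping, since Proposition \ref{propsurjbound} has done the difficult part of pinpointing the candidate vertex and the shape of $\nu$. The only point that requires a moment of care is the set-theoretic identification $\sigma^{-1}(E^{*}v) \cap (\partial E \setminus E^{0}) \subseteq E^{*}v$ used in both the invariance and the ergodicity arguments, which is what makes the purely atomic measure supported on $E^{*}v$ automatically $\sigma$-invariant.
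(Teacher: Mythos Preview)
Your proposal is correct and follows essentially the same approach as the paper: both use Proposition \ref{propsurjbound} for the hard direction, verify regularity via the local finiteness condition and the cover by sets $Z(V_{w})$, check invariance via the fact that $\sigma$ restricts to a bijection between $A\cap E^{*}v$ and $\sigma(A)\cap E^{*}v$, and deduce extremality from the observation that an invariant set either contains all of $E^{*}v$ or none of it according to whether it contains $v$. The only cosmetic difference is that you set up the bijection from rays to vertices and prove surjectivity by constructing $\nu_{v}$, whereas the paper sets it up from vertices to rays and invokes Proposition \ref{propsurjbound} for surjectivity; the content is identical.
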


\begin{proof}
Assume that $v\in E^{0}_{sng}$ satisfies that for any $w\in E^{0}$ there exists an open neighbourhood $V_{w} \subseteq E^{0}$ of $w$ satisfying $|V_{w}E^{*}v|< \infty$, and define $\nu:=\sum_{\alpha \in E^{*}v} \delta_{\alpha}$. If $V\subseteq E^{0}$ is an open set then
$$
\nu(Z(V)) = \sum_{\alpha \in E^{*}v} \delta_{\alpha}(Z(V)) =|VE^{*}v| \; .
$$
Since the sets $Z(V_{w})$ with $w\in E^{0}$ is an open cover of $\partial E$ it follows that $\nu$ is a regular measure. Now assume that $A \subseteq \partial E \setminus E^{0}$ with $\sigma$ injective on $A$. Since $\sigma(\alpha) \in E^{*}v$ if and only if $\alpha \in E^{*}v$ for all $\alpha \in A$ we get that
\begin{align*}
\nu(\sigma(A)) &= |\sigma(A) \cap E^{*}v|
=|\sigma (A \cap E^{*} v)|
=|A \cap E^{*}v|
= \nu(A) \; ,
\end{align*}
which by \eqref{eqinvmeasure} proves that $\nu$ is an invariant boundary measure. The measures is clearly extremal, because any invariant set $A$ either contains $E^{*}v$ or is disjoint from it, depending on whether it contains $v$. It follows that the map which maps such a vertex $v$ to the extremal ray of boundary measures $\{ r \cdot \nu \; | \; r>0\}$ is well defined, and by Proposition \ref{propsurjbound} it is also  surjective. To see that it is injective, notice that $\nu(\{w\})>0$ for a $w\in E^{0}$ if and only if $w=v$ which proves that the map is a bijection.
\end{proof}

\begin{remark}
If $E^{0}$ is compact we can exchange the condition: For any $w\in E^{0}$ there exists an open neighbourhood $V_{w} \subseteq E^{0}$ of $w$ satisfying $|V_{w}E^{*}v|< \infty$, with the condition that $|E^{*}v| < \infty$.

It follows from Theorem \ref{thmsingbi} that the rays of extremal finite invariant boundary measures maps bijectively onto the vertexes satisfying the extra condition $|E^{*}v|<\infty$. This gives the Corollary below.
\end{remark}

\begin{cor}
Let $E$ be a second countable topological graph. There is a bijection between the set of extremal invariant boundary probability measures on $\partial E$ and the set of vertices $v\in E^{0}_{sng}$ with $|E^{*}v|< \infty$. The extremal invariant boundary probability measures corresponding to such a vertex is on the form
$$
\nu = \frac{1}{|E^{*}v|}\sum_{\alpha \in E^{*}v} \delta_{\alpha} \; .
$$
\end{cor}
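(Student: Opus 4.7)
The plan is to deduce this corollary directly from Theorem \ref{thmsingbi} together with the observation in the preceding remark. The key observation is that the normalisation condition for a probability measure reduces the local-finiteness condition on $v$ in Theorem \ref{thmsingbi} to the much simpler global finiteness $|E^*v|<\infty$.

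First I would note that if $v\in E^0_{sng}$ satisfies $|E^*v|<\infty$, then for every $w\in E^0$ the open neighbourhood $V_w=E^0$ trivially yields $|V_w E^* v|\leq |E^*v|<\infty$, so $v$ automatically satisfies the hypothesis of Theorem \ref{thmsingbi}. Conversely, any vertex from Theorem \ref{thmsingbi} whose associated ray contains a probability measure must satisfy $|E^*v|<\infty$, since the representative $\nu=\sum_{\alpha\in E^*v}\delta_\alpha$ has total mass exactly $|E^*v|$, and this must be finite to admit a positive normalising scalar.

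Next I would use Theorem \ref{thmsingbi} to define the map: to a vertex $v\in E^0_{sng}$ with $|E^*v|<\infty$ associate the probability measure $|E^*v|^{-1}\sum_{\alpha\in E^*v}\delta_\alpha$, which is the unique probability measure in the ray of extremal invariant boundary measures corresponding to $v$. Since every ray contains at most one probability measure, and by Theorem \ref{thmsingbi} an extremal invariant boundary probability measure lies in a ray corresponding to some vertex $v\in E^0_{sng}$ satisfying the local condition, the map is surjective onto the extremal invariant boundary probability measures. Injectivity is inherited from the bijection in Theorem \ref{thmsingbi}: if two vertices $v_1,v_2$ gave the same probability measure, the corresponding rays would coincide, forcing $v_1=v_2$.

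There is really no obstacle here; the corollary is a straightforward specialisation of Theorem \ref{thmsingbi} to the case of finite total mass, together with the observation that $|E^*v|<\infty$ absorbs the local-finiteness hypothesis. The explicit formula for the measure is immediate from the explicit form given in Theorem \ref{thmsingbi} after dividing by the total mass $|E^*v|$.
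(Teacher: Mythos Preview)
Your proposal is correct and follows essentially the same approach as the paper: the paper does not give a separate proof of this corollary but deduces it from Theorem~\ref{thmsingbi} together with the preceding remark, noting that the ray associated to $v$ contains a finite (hence normalisable) measure precisely when $|E^{*}v|<\infty$. Your argument spells out exactly this reasoning, including the observation that $|E^{*}v|<\infty$ trivially implies the local-finiteness hypothesis of Theorem~\ref{thmsingbi}.
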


The upshot of Theorem \ref{thmsingbi} is that the extremal boundary measures are easy to describe. To support this statement, let us argue that for simple C$^{*}$-algebras of topological graphs, boundary measures only occurs in the very special cases where the $C^{*}$-algebra of the graph is (isomorphic to) the compact operators $\mathcal{K}(H)$ on a Hilbert space $H$. 

\begin{cor} \label{cor211}
Assume that $E$ is a second countable topological graph and that $C^{*}(E)$ is simple. If there exists an invariant boundary measure $\nu$ on $\partial E$ then there exists a $v\in E^{0}_{sng}$ with $E^{*}v$ countable such that
$$
C^{*}(E) \simeq \mathcal{K}(l^{2}(E^{*}v)) \; .
$$
\end{cor}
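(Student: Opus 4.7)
The plan is to exploit simplicity of $C^{*}(E)$, which forces the étale amenable groupoid $\mathcal{G}_{E}$ to be minimal, together with Theorem \ref{thmsingbi}, to show that $\partial E$ is a countable discrete space on which $\mathcal{G}_{E}$ acts as a pair groupoid. First I would reduce to an extremal boundary measure: since $E^{*}_{sng}$ is an invariant Borel set (Lemma \ref{lemma11}), the ergodic decomposition of the given $\nu$ supplies a non-zero extremal invariant measure still concentrated on $E^{*}_{sng}$, i.e.\ an extremal boundary measure, which by Theorem \ref{thmsingbi} equals $r\sum_{\alpha \in E^{*}v} \delta_{\alpha}$ for some $v \in E^{0}_{sng}$ with $E^{*}v$ countable; replacing $\nu$ by this measure, I may assume $\nu$ has this explicit form.

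Simplicity of $C^{*}(E) \cong C^{*}(\mathcal{G}_{E})$ combined with amenability of $\mathcal{G}_{E}$ forces $\mathcal{G}_{E}$ to be minimal, so every orbit is dense in $\partial E$. The $\mathcal{G}_{E}$-orbit of $v$ is precisely $E^{*}v$, because any $\alpha \in \partial E$ satisfying $\sigma^{k}(\alpha) = v$ for some $k$ must be a finite path ending at the sink $v$, so $\overline{E^{*}v} = \partial E$. To upgrade density to equality, let $\beta \in \partial E$ and pick an open neighbourhood $U$ of $\beta$ with $\overline{U}$ compact; regularity of $\nu$ gives $\nu(U) = |U \cap E^{*}v| < \infty$, so $U$ contains only finitely many points of the dense set $E^{*}v$. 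This rules out $\beta \notin E^{*}v$, since otherwise $\beta$ would be an accumulation point of $E^{*}v$; hence $\partial E = E^{*}v$. The same finiteness together with the Hausdorff property lets me shrink $U$ to a neighbourhood containing only $\alpha$ for any $\alpha \in E^{*}v$, so $\partial E$ carries the discrete topology on the countable set $E^{*}v$.

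It remains to identify $\mathcal{G}_{E}$ as the pair groupoid on $E^{*}v$ and conclude. For $\alpha = \xi v$ and $\beta = \eta v$ in $E^{*}v$, a triple $(\alpha, k-l, \beta)$ lies in $\mathcal{G}_{E}$ exactly when $\sigma^{k}(\alpha) = \sigma^{l}(\beta)$ as literal suffixes ending at $v$, which forces $|\xi| - k = |\eta| - l$ and hence fixes $k-l = |\xi| - |\eta|$; taking $k = |\xi|$, $l = |\eta|$ realises this condition, so there is exactly one arrow between any two units. Thus $\mathcal{G}_{E}$ is the pair groupoid on the countable discrete set $E^{*}v$, and $C^{*}(E) \cong C^{*}(\mathcal{G}_{E}) \cong \mathcal{K}(\ell^{2}(E^{*}v))$. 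The main obstacle I anticipate is the initial reduction to an extremal boundary measure: it requires an ergodic decomposition that respects the harmonic/boundary splitting of Proposition \ref{propuniqueinvdecomp}, and once that step is in hand everything else is forced by Theorem \ref{thmsingbi} and the explicit pair-groupoid identification.
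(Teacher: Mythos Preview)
Your reduction to an extremal boundary measure is the one genuine soft spot, and you already flag it. The paper does \emph{not} invoke an ergodic decomposition of $\nu$; for regular (possibly infinite) invariant measures this is delicate, and no such decomposition theorem is established in the paper. Instead the paper passes to the tracial-weight side: it takes $\eta=\int P(\cdot)\,\mathrm{d}\nu$, chooses $f\in C_{c}(E^{0})_{+}$ with $\int f\,\mathrm{d}T(r_{*}\nu)<\int f\,\mathrm{d}(r_{*}\nu)$, uses minimality of $\mathcal{G}_{E}$ (via Proposition~6.10 of \cite{C}) to place the weights with $\psi(f\circ r)=\eta(f\circ r)$ in a simplex, and then applies Choquet theory to extract an extremal tracial weight $\psi$ whose measure $\nu_{\psi}$ still satisfies the strict inequality, hence is an extremal boundary measure. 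So your anticipated obstacle is exactly where the paper spends its effort, and the fix goes through weights rather than a direct measure-theoretic decomposition.

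Once the reduction is in hand, your route diverges from the paper's and is cleaner. The paper shows $E^{*}v$ is \emph{closed} by a case analysis on paths and then invokes the fact that a proper closed invariant subset would produce a nontrivial ideal. You instead use minimality to get density of the orbit $E^{*}v$, then use regularity of $\nu=r\sum_{\alpha}\delta_{\alpha}$ to see that every relatively compact open set meets $E^{*}v$ in only finitely many points; Hausdorffness then forces $\partial E=E^{*}v$ and discreteness in one stroke. This is a genuinely shorter argument than the paper's closedness proof. Your identification of $\mathcal{G}_{E}$ with the pair groupoid is correct and matches the paper's (the paper phrases it as ``the equivalence relation $E^{*}v\times E^{*}v$''): for $\alpha,\beta\in E^{*}v$ any arrow $(\alpha,k-l,\beta)$ forces $|\alpha|-k=|\beta|-l$, so $k-l=|\alpha|-|\beta|$ is uniquely determined, and $k=|\alpha|$, $l=|\beta|$ realises it.
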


\begin{proof}
Set $\eta = \int P(\cdot) \mathrm{d} \nu$ which is a tracial weight on $C^{*}(E)$. Take a $f\in C_{c}(E^{0})_{+}$ with 
$$
\int_{E^{0}} f\; \mathrm{d} T(r_{*}\nu) < \int_{E^{0}}f\; \mathrm{d} (r_{*}\nu) \; ,
$$
which we can rewrite using \eqref{eqanyinv} to get
$$
\int_{\partial E} \sum_{\beta \in \sigma^{-1}(\alpha)} f\circ r(\beta) \; \mathrm{d} \nu(\alpha)  < \int_{\partial E} f\circ r \; \mathrm{d} \nu \; .
$$
Since $r^{-1}(\text{supp}(f))=Z(\text{supp}(f))$ then $f\circ r \in C_{c}(\partial E)_{+}$. We can find an increasing sequence $\{ g_{n} \}_{n\in \mathbb{N}} \subseteq C_{c}(\partial E \setminus E^{0} )$ of positive functions with $\lim_{n\to \infty} g_{n} = f\circ r|_{\partial E \setminus E^{0}}$. Using that $\mathcal{G}_{E}$ is a minimal groupoid, Proposition 6.10 in \cite{C} implies that the tracial weights $\psi$ satisfying $\psi(f\circ r)=\eta (f\circ r)$ is a simplex. Since
$$
\lim_{n\to \infty} \int_{\partial E} \sum_{\beta \in \sigma^{-1}(\alpha)} g_{n}(\beta) \; \mathrm{d} \nu(\alpha) 
= \int_{\partial E} \sum_{\beta \in \sigma^{-1}(\alpha)} f\circ r(\beta) \; \mathrm{d} \nu(\alpha)  < \int_{\partial E} f\circ r \; \mathrm{d} \nu \; ,
$$
and $\alpha \mapsto  \sum_{\beta \in \sigma^{-1}(\alpha)} g_{n}(\beta)$ lies in $C_{c}(\partial E)$, then Choquet theory implies that there is an extremal tracial weight $\psi$ with $\psi(f\circ r)=\eta (f\circ r)$ and 
$$
\int_{\partial E} \sum_{\beta \in \sigma^{-1}(\alpha)} f\circ r(\beta) \; \mathrm{d} \nu_{\psi}(\alpha) 
=\lim_{n\to \infty} \int_{\partial E} \sum_{\beta \in \sigma^{-1}(\alpha)} g_{n}(\beta) \; \mathrm{d} \nu_{\psi}(\alpha) 
 < \int_{\partial E} f\circ r \; \mathrm{d} \nu_{\psi} \; ,
$$
where $\nu_{\psi}$ is the measures associated to $\psi$. This implies that 
\begin{equation*}
\int_{E^{0}} f\; \mathrm{d} T(r_{*}\nu_{\psi}) 
=\int_{\partial E} \sum_{\beta \in \sigma^{-1}(\alpha)} f\circ r(\beta) \; \mathrm{d} \nu_{\psi}(\alpha)
< \psi(f\circ r)=\int_{E^{0}}f\; \mathrm{d} (r_{*}\nu_{\psi})\; ,
\end{equation*}
from which it follows that $\nu_{\psi}$ is an extremal boundary measure.

So we can assume without loss of generality that $\nu$ is an extremal invariant boundary measure. It follows that there exists a $v \in E_{sng}^{0}$ such that $\nu$ is concentrated on the countable set $E^{*}v$. If we can prove that $\partial E = E^{*}v$ this will prove the Corollary, because then $Z(V_{w})=V_{w}E^{*}v$ and hence $\partial E$ is discrete, and the groupoid corresponding to the graph will be isomorphic to the equivalence relation $E^{*}v \times E^{*}v$, which exactly gives rise to the C$^{*}$-algebra $\mathcal{K}(l^{2}(E^{*}v))$. Since $E^{*}v$ is invariant and $C^{*}(E)$ is simple, it suffices to prove that $E^{*}v$ is a closed set to see that $\partial E = E^{*} v$, because a non-trivial closed invariant set in the unit space of a groupoid gives rise to a non-trivial ideal.

Assume now that $\alpha \in \partial E$ with $\alpha \notin E^{*}v$. By existence of $\nu$ there is an open set $U \subseteq E^{0}$ such that $r(\alpha) \in U$ yet $|U E^{*} v| < \infty$. Write $U E^{*} v = \{ \beta^{1}, \dots, \beta^{n} \}$. We consider two cases. First, assume that $r(\alpha_{j})\neq v$ for all $j \in \mathbb{N}$. By definition each $\beta^{i}$ has $s(\beta^{i}) =  v$, so it follows that $\alpha \notin Z(\{\beta^{1}, \dots \beta^{n} \})$. Since $\{\beta^{1}, \dots \beta^{n} \}$ is compact the set $Z(U)\setminus Z(\{\beta^{1}, \dots \beta^{n} \})$ is an open set containing $\alpha$, yet a $\gamma \in E^{*}v \cap Z(U)$ has to be an element of $U E^{*} v$, proving that $Z(U)\setminus Z(\{\beta^{1}, \dots \beta^{n} \})$ is disjoint from $E^{*} v$.

Assume for the other case that $r(\alpha_{i})=v$ for some $i\in \mathbb{N}$. If this was true for more than one $i$ we would have $|U E^{*} v|=\infty$. So this $i$ is unique. It follows that there exists a unique $\beta^{k}\in U E^{*} v$ with $\alpha=\beta^{k}\alpha'$. Since $\alpha \notin E^{*}v$ we have $|\alpha|>|\beta^{k}|$. Now let $W\subseteq E^{|\beta^{k}|+1}$ be an open set containing the path $\alpha_{1}\dots\alpha_{|\beta^{k}|+1}$ such that $r(W)\subseteq U$, which can be arranged since $r:E^{*} \to E^{0}$ is continuous. The set
$$
Z(W)\setminus Z(\{ \beta^{1}, \dots , \beta^{k-1}, \beta^{k+1}, \dots, \beta^{n} \})
$$
is an open set in $\partial E$ containing $\alpha$. Assume for a contradiction that $\gamma$ lies in this open set and $E^{*}v$. By choice of $W$ then $r(\gamma)\in r(W)\subseteq U$, proving that $\gamma=\beta^{i}$ for some $1\leq i \leq n$. Since $\gamma\notin Z(\{ \beta^{1}, \dots , \beta^{k-1}, \beta^{k+1}, \dots, \beta^{n} \})$ then $\gamma=\beta^{k}$. But $\gamma\notin Z(W)$ because $W\subseteq E^{|\beta^{k}|+1}$, which proves that the intersection is empty. In conclusion $E^{*}v$ is a closed set.
\end{proof}

\begin{remark}
If there exists an invariant boundary probability measure in Corollary \ref{cor211}, then $E^{*}v$ is finite and $C^{*}(E)\simeq M_{|E^{*}v|}(\mathbb{C} )$. 
\end{remark}

While the extremal boundary measures are easy to describe in general, describing all harmonic measures is much more complicated, as illustrated in the following example.   

\begin{example} \label{excrossed}
Let $X$ be a second countable locally compact Hausdorff space and let $\varphi$ be a homeomorphism on $X$. Define a topological graph $E_{X}$ by setting $E_{X}^{0}=E_{X}^{1}=X$, $s=id_{X}$ and $r=\varphi$. It then follows from Example 2 in \cite{K1} that $C^{*}(E_{X}) \simeq C(X)\rtimes_{\varphi} \mathbb{Z}$. Since $s^{*}\mu = \mu$ for any regular measure on $X$, it follows from Definition \ref{eqvertexinv} that a regular measure $\mu$ on $X$ is vertex-invariant and harmonic if and only if it satisfies that $\int_{X} f\circ \varphi \; \mathrm{d} \mu=\int_{X} f \; \mathrm{d} \mu$ for all $f\in C_{c}(X)$, i.e. that $\mu$ is a $\varphi$-invariant measure on $X$. It follows that describing the extremal harmonic vertex-invariant measures in this case is equivalent with describing the ergodic $\varphi$-invariant measures on $X$. 
\end{example}

\section{Loops in topological graphs}\label{subsection23a}
As illustrated in Example \ref{excrossed} it is not feasible to obtain formulas for all extremal harmonic measures in a similar manner as was done for boundary measures. It however turns out that we can describe the extremal harmonic measures which gives rise to non gauge-invariant tracial weights much more concretely by analysing loops in the graph. Before turning to this analysis of harmonic measures, we will need some preliminary results on loops in topological graphs.

We call a finite path $\alpha \in E^{*}$ a loop if it satisfies that $|\alpha|\geq 1$ and $s(\alpha)=r(\alpha)$. Any loop $\alpha \in E^{*}$ gives rise to a well defined infinite path $\alpha^{\infty} \in E^{\infty}$ given by
$$
\alpha^{\infty} = \alpha \alpha \alpha \alpha \cdots \; .
$$
For a loop $\alpha$ in $E$ we introduce the notation
$$
E_{\alpha}^{*} = \{ \beta \in E^{*}r(\alpha)\; | \; \text{$\beta$ is not on the form $\beta=\beta'\alpha$} \}  \; .
$$
Similar to our conventions for $E^{*}$ we write $AE_{\alpha}^{*}B$ for the paths $\beta \in E_{\alpha}^{*}$ with $r(\beta)\in A$ and $s(\beta) \in B$ where $A, B \subseteq E^{0}$. We call a loop $\alpha$ \emph{simple} if $r(\alpha_{i})\neq r(\alpha_{j})$ for all $i\neq j$, c.f. Definition 6.5 in \cite{K3}. We will furthermore work with the following two kinds of loops.

\begin{defn} \label{def217a}
Let $E$ be a topological graph. We call a loop $\alpha \in E^{*}$ an \emph{isolated loop} if it satisfies the following two conditions:
\begin{enumerate}
\item[(a)] $\alpha\neq \gamma^{n}$ for any loop $\gamma \in E^{*}$ and any $n\geq 2$, and
\item[(b)] there exists an open neighbourhood $V_{r(\alpha)} \subseteq E^{0}$ of $r(\alpha)$ with $| V_{r(\alpha)} E_{\alpha}^{*} | < \infty$
\end{enumerate}
We call a loop $\alpha$ a \emph{summable loop} if it satisfies (a) and (b')
\begin{enumerate}
\item[(b')] For any $w\in E^{0}$ there exists an open neighbourhood $V_{w} \subseteq E^{0}$ of $w$ with $| V_{w} E_{\alpha}^{*} | < \infty$.
\end{enumerate}
\end{defn}

Clearly a summable loop is isolated. We will prove that an isolated loop is a simple loop.

\begin{lemma} \label{lemmaunfree}
Let $E$ be a topological graph. If $\alpha$ is an isolated loop then there can not exists a loop $\gamma \in r(\alpha)E^{*}$ such that $\gamma \notin \{ \alpha^{n} \}_{n\in \mathbb{N}}$. In particular, an isolated loop is simple.
\end{lemma}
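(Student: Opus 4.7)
The plan is to argue the main statement by contradiction. Suppose there is a loop $\gamma \in r(\alpha)E^{*}$ with $\gamma \notin \{\alpha^{n}\}_{n\in\mathbb{N}}$, set $v := r(\alpha)$, and aim to produce infinitely many elements of $V_{v} E_{\alpha}^{*}$ for any open neighbourhood $V_{v}$ of $v$, contradicting condition (b) of Definition~\ref{def217a}. First I would normalise $\gamma$ by stripping maximal trailing copies of $\alpha$: write $\gamma = \gamma'\alpha^{k}$ where $\gamma'$ is not of the form $\beta\alpha$. If this stripping exhausts $\gamma$ we obtain $\gamma=\alpha^{k}$, a contradiction; hence $\gamma'$ is a nontrivial loop at $v$ with $\gamma' \in E_{\alpha}^{*}$ and $\gamma' \notin \{\alpha^{n}\}$.

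Now I would split on the length of $\gamma'$ compared to $|\alpha|$. If $|\gamma'| \geq |\alpha|$, then the loops $\alpha^{n}\gamma'$ ($n \geq 0$) are pairwise distinct elements of $v E_{\alpha}^{*}$: their last $|\alpha|$ edges coincide with those of $\gamma'$, which are not $\alpha$. This is the desired infinite family. If $|\gamma'| < |\alpha|$, write $|\alpha| = q|\gamma'|+s$ with $0 \leq s < |\gamma'|$ and examine $\gamma'^{k}$ for $k \geq q+1$. The last $|\alpha|$ edges of $\gamma'^{k}$ do not depend on $k$ in this range, so either all these powers lie in $E_{\alpha}^{*}$ (infinitely many, done), or none does. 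In the latter case, inspecting the last $|\alpha|$ edges gives $\alpha = \eta\,\gamma'^{q}$, where $\eta$ is the length-$s$ suffix of $\gamma'$. Primitivity (condition (a)) excludes $s=0$, since $s=0$ would give $\alpha = \gamma'^{q}$ with $q \geq 2$; and the identities $r(\eta) = r(\alpha) = v$ and $s(\eta) = s(\gamma')=v$ ensure that $\eta$ is a nontrivial loop at $v$ with $\eta \notin \{\alpha^{n}\}$ (by length).

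Since $|\eta| < |\gamma'|$, I would restart the argument with $\eta$ in place of $\gamma'$. The loop length strictly decreases with each recursion, so after finitely many steps either I obtain the infinite family directly, or I reach a loop $\eta$ of length $1$. In that case $|\alpha| \geq 2$, and a power $\eta^{k}$ ending in $\alpha$ would force $\alpha = \eta^{|\alpha|}$, again contradicting primitivity; so every $\eta^{k}$ with $k \geq 1$ lies in $E_{\alpha}^{*}$, and we are done. The main obstacle is this $|\gamma'|<|\alpha|$ branch, where the combinatorics of how the periodic concatenation $\gamma'\gamma'\cdots$ can terminate in $\alpha$ has to be dissected to extract the shorter loop $\eta$; once the extraction works, the recursion and the base case are bookkeeping.

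For the ``in particular'' clause, suppose $\alpha$ is not simple, so that $r(\alpha_{i}) = r(\alpha_{j})$ for some $1 \leq i < j \leq |\alpha|$. If some $j \geq 2$ satisfies $r(\alpha_{j}) = v$, then $\alpha_{1}\cdots\alpha_{j-1}$ is a loop at $v$ of length in $[1,|\alpha|-1]$, hence not a power of $\alpha$. Otherwise the repeat lies at an internal vertex $u \neq v$ with $2 \leq i < j \leq |\alpha|$, and inserting a second copy of the sub-loop $\delta := \alpha_{i}\cdots\alpha_{j-1}$ produces $\alpha_{1}\cdots\alpha_{i-1}\,\delta\delta\,\alpha_{j}\cdots\alpha_{|\alpha|}$, a loop at $v$ of length $|\alpha|+|\delta| \in (|\alpha|, 2|\alpha|)$, which again cannot equal any $\alpha^{n}$. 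In either case the main statement, applied to this competing loop, contradicts isolatedness of $\alpha$; hence $\alpha$ is simple.
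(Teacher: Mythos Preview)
Your argument is correct, but it takes a different route from the paper's proof. After stripping trailing copies of $\alpha$ to obtain $\gamma'$, the paper does not split on whether $|\gamma'|\geq|\alpha|$; instead it extracts from $\gamma'$ the \emph{terminal} sub-loop $\delta$ based at $v$ that does not revisit $v$ in its interior (write $\gamma'=\gamma''\delta$ with $r(\delta_i)\neq v$ for $2\le i\le|\delta|$). One then checks directly that every power $\delta^{m}$ lies in $E_{\alpha}^{*}$: if $|\delta|\ge|\alpha|$ this is immediate, and if $|\delta|<|\alpha|$ and $\delta^{m}=\beta\alpha$, one may reduce to $0<|\beta|<|\delta|$, which forces a decomposition $\delta=\beta\gamma$ into two loops at $v$, contradicting the choice of $\delta$. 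Thus the infinite family $\{\delta^{m}\}_{m\ge1}\subseteq V_{v}E_{\alpha}^{*}$ is produced in one step.

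Your approach replaces this ``minimal sub-loop'' extraction by a Euclidean-style descent: rather than passing once to a loop with no internal return to $v$, you repeatedly shorten the competing loop until either its powers avoid ending in $\alpha$, or $s=0$ and primitivity kills the process. This is valid and has a certain algorithmic flavour, but it introduces a recursion (and a separate base case) that the paper's single combinatorial observation avoids. For the ``in particular'' clause the paper is likewise more economical: it deletes the repeated segment to form $\alpha_{1}\cdots\alpha_{i-1}\alpha_{j}\cdots\alpha_{|\alpha|}$, a single loop of length strictly between $0$ and $|\alpha|$, whereas you split into two cases (a return to $v$ versus an internal repeat) and in the second case insert an extra copy of the internal cycle. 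Both constructions yield a loop at $v$ whose length is not a multiple of $|\alpha|$, so both conclude correctly.
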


\begin{proof}
Assume for a contradiction that $\gamma \in r(\alpha)E^{*}$ is a loop such that $\gamma \notin \{ \alpha^{n} \}_{n\in \mathbb{N}}$. Write $\gamma = \gamma'\alpha^{n}$ with $n\geq 0$ as big as possible. It follows that $|\gamma'| >0$ and that $\gamma '$ is a loop. We can also decompose $\gamma'= \gamma '' \delta$, where $\delta \in r(\alpha) E^{*}$ is a loop and $r(\delta_{i})\neq r(\alpha)$ for $2\leq i\leq |\delta|$.  Since $n$ was chosen as big as possible we have that $\delta \in E_{\alpha}^{*} $.

We now claim that $\delta^{m} \in E_{\alpha}^{*}$ for all $m\geq 2$. If $|\delta| \geq |\alpha |$ this follows directly from the fact that $\delta \in E_{\alpha}^{*} $. If $|\delta|<|\alpha|$ and $\delta^{m}=\beta \alpha$, then we can assume (by possibly picking a smaller $m$) that $|\beta | < |\delta|$. By assumption $(a)$ then $|\beta|\neq 0$, which means that $\delta=\beta \gamma$ with $\gamma$ and $\beta$ loops, which can not be the case since $r(\delta_{i})\neq r(\alpha)$ for $2\leq i\leq |\delta|$. So $\{ \delta^{m}\}_{m\in \mathbb{N}} \subseteq V_{r(\alpha)}E_{\alpha}^{*}$, which contradicts assumption $(b)$.

If $\alpha$ is not simple then there exists a $i<j$ such that $r(\alpha_{i})=r(\alpha_{j})$, but then $\gamma:=\alpha_{1}\cdots \alpha_{i-1}\alpha_{j}\cdots \alpha_{|\alpha|} \in r(\alpha)E^{*}$ is a loop such that $\gamma \notin \{ \alpha^{n}\}_{n\in \mathbb{N}}$.
\end{proof}

\begin{defn} [\cite{K3}] 
A free topological graph is a graph $E$ such that there does not exist an element $v\in E^{0}$ satisfying all of the following three conditions:
\begin{enumerate}
\item[(F1)]\label{F1} There exists a simple loop $l_{1}\cdots l_{n}$ in $E$ with $r(l_{1})=v$.
\item[(F2)]\label{F2} If $e\in E^{1}$ with $s(e)\in \{r(\beta) \; | \; \beta \in E^{*}v\}$ and $r(e)=r(l_{k})$ for some $k$ then $e=l_{k}$.
\item[(F3)]\label{F3} $v$ is isolated in $\{r(\beta) \; | \; \beta \in E^{*}v\}$.
\end{enumerate}
\end{defn}
This notion of freeness introduced by Katsura is a simultaneous generalisation of the notion of condition $(K)$ for directed graphs and the notion of freeness for crossed products by homeomorphisms.

\begin{lemma} \label{lemma44}
Let $E$ be a second countable topological graph. Then $E$ is free if and only if there does not exist an isolated loop.
\end{lemma}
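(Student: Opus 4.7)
The plan is to prove both directions separately, with Lemma \ref{lemmaunfree} as the key tool for the ``only if'' direction and an iterated application of condition (F2) driving the ``if'' direction.

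For the direction ``isolated loop $\Rightarrow$ $E$ not free'', assume $\alpha = l_1 \cdots l_n$ is an isolated loop and set $v := r(\alpha)$. I would verify that $v$ satisfies (F1), (F2) and (F3). Condition (F1) is immediate from Lemma \ref{lemmaunfree}, which gives simplicity of $\alpha$. For (F3), every $\beta \in E^{*}v$ admits a decomposition $\beta = \beta''\alpha^{k}$ with $k \geq 0$ taken maximal and $\beta'' \in E_{\alpha}^{*}$ (allowing $\beta'' = v$ of length $0$), so that $r(\beta) = r(\beta'')$. Hence $V_{r(\alpha)} \cap \{r(\beta) : \beta \in E^{*}v\}$ is contained in the finite set $\{r(\beta'') : \beta'' \in V_{r(\alpha)}E_{\alpha}^{*}\}$ coming from condition (b), and Hausdorffness of $E^{0}$ lets me shrink $V_{r(\alpha)}$ to a smaller open neighborhood that isolates $v$ from the other (finitely many) points. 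For (F2), I argue by contradiction: given $e \in E^{1}$ with $s(e) = r(\beta)$ for some $\beta \in E^{*}v$, $r(e) = r(l_{k})$, and $e \neq l_{k}$, the concatenation $\gamma := l_{1} \cdots l_{k-1} e \beta$ (empty prefix if $k = 1$, and trivially $e\beta = e$ when $|\beta| = 0$) is a loop at $v$ whose $k$-th edge is $e \neq l_{k}$, so $\gamma \notin \{\alpha^{m}\}_{m \geq 1}$, contradicting Lemma \ref{lemmaunfree}.

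For the direction ``$E$ not free $\Rightarrow$ isolated loop'', let $v$ witness non-freeness with simple loop $\alpha = l_{1} \cdots l_{n}$ from (F1). I claim $\alpha$ itself is isolated. Condition (a) follows because a factorization $\alpha = \gamma^{m}$ with $m \geq 2$ would force $r(\alpha_{1}) = r(\alpha_{|\gamma|+1}) = v$, contradicting simplicity. For condition (b), the central claim is that every loop $\beta = \beta_{1} \cdots \beta_{m}$ at $v$ satisfies $\beta = \alpha^{k}$ for some $k \geq 1$. I prove this inductively: setting $k_{i} := ((i-1) \bmod n) + 1$, the equality $r(\beta_{1}) = v = r(l_{1})$ together with $s(\beta_{1}) = r(\beta_{2} \cdots \beta_{m}) \in \{r(\beta') : \beta' \in E^{*}v\}$ (using $\beta' = v$ when $m = 1$) and (F2) gives $\beta_{1} = l_{1}$; the inductive step follows because $r(\beta_{i+1}) = s(\beta_{i}) = s(l_{k_{i}}) = r(l_{k_{i+1}})$ by the previous step, so (F2) forces $\beta_{i+1} = l_{k_{i+1}}$. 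The loop-closing condition $s(\beta_{m}) = v$ then forces $n \mid m$, giving $\beta = \alpha^{m/n}$. In particular, no element of $E_{\alpha}^{*}$ of length $\geq 1$ can have range $v$, so $E_{\alpha}^{*} \cap r^{-1}(\{v\}) = \{v\}$. Taking $V_{v}$ to be the neighborhood from (F3), any $\beta \in V_{v} E_{\alpha}^{*}$ satisfies $r(\beta) \in V_{v} \cap \{r(\beta') : \beta' \in E^{*}v\} = \{v\}$, hence $\beta = v$, so $V_{v} E_{\alpha}^{*} = \{v\}$ is finite, verifying (b).

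The main obstacle is the inductive use of (F2) in the backward direction: I need to ensure at each step that both $r(\beta_{i}) = r(l_{k_{i}})$ and $s(\beta_{i}) \in \{r(\beta') : \beta' \in E^{*}v\}$ hold, with care for the boundary case $i = m$ where one uses the length-zero path $v$ to witness the latter, and then to observe that the indexing $k_{i}$ cycles properly so that the loop-closing condition on $s(\beta_{m})$ pins $m$ to be a multiple of $n$.
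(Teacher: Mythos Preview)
Your proposal is correct and follows essentially the same approach as the paper's proof: both directions use Lemma \ref{lemmaunfree} for the ``isolated $\Rightarrow$ not free'' implication (with the same construction $\gamma = l_{1}\cdots l_{k-1}e\beta$ for (F2) and a finiteness-of-ranges argument for (F3)), and both iterate condition (F2) to force any loop at $v$ to be a power of $\alpha$ in the ``not free $\Rightarrow$ isolated'' implication, arriving at $V_{v}E_{\alpha}^{*}=\{v\}$. Your write-up is in places slightly more explicit than the paper's (e.g., the decomposition $\beta=\beta''\alpha^{k}$ to justify finiteness of the relevant range set, and the modular indexing $k_{i}$), but there is no substantive difference in strategy.
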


\begin{proof}
Assume that $E$ is not a free graph and let $\alpha \in vE^{*}$ be the simple loop from $(F1)$. Since $\alpha$ is simple it clearly satisfies $(a)$ in Definition \ref{def217a}. By $(F3)$ we can find an open set $V_{v} \subseteq E^{0}$ such that $r(\beta)\in V_{v}$ and $\beta \in E^{*}v$ implies that $r(\beta)=v$. Now assume for a contradiction that $\gamma \in V_{v} E_{\alpha}^{*}$ with $|\gamma| \neq 0$. Then $r(\gamma)\in V_{v}$ and $\gamma \in E^{*}v$, and hence $\gamma$ is a loop in $vE^{*}$. Then $s(\gamma_{1})\in \{r(\beta) \; | \; \beta \in E^{*}v\}$ and $r(\gamma_{1})=r(\alpha_{1})$, so by $(F2)$ then $\gamma_{1}=\alpha_{1}$. If $|\gamma|\geq 2$ then $s(\gamma_{2})\in \{r(\beta) \; | \; \beta \in E^{*}v\}$ and $r(\gamma_{2})=s(\gamma_{1})=s(\alpha_{1})=r(\alpha_{2})$, so $\gamma_{2}=\alpha_{2}$. We can continue like this to write $\gamma=\alpha^{n}\alpha_{1}\cdots \alpha_{k}$ for some $0\leq k <|\alpha|$. But $s(\gamma)=r(\alpha)$, so since $\alpha$ is simple, we get that $k=0$. Since $|\gamma|\neq 0$ and $\gamma \in E_{\alpha}^{*}$ we reach a contradiction. So $V_{v} E_{\alpha}^{*} = \{v\}$, and hence $\alpha$ also satisfies $(b)$ in Definition \ref{def217a} which makes it an isolated loop.

Assume now that there exists an isolated loop $\alpha$ in $E^{*}$. Set $v=r(\alpha)$. By Lemma \ref{lemmaunfree} $\alpha$ is simple. Let now $e\in E^{1}$ satisfy that $s(e)=r(\beta)$ with $\beta \in E^{*}v$ and $r(e)=r(\alpha_{k})$ for some $k$. It follows that we have a loop $\alpha_{1}\cdots \alpha_{k-1}e\beta \in r(\alpha)E^{*}$. By Lemma \ref{lemmaunfree} then $\alpha_{1}\cdots \alpha_{k-1}e\beta=\alpha^{n}$. So $e=\alpha_{k}$, which proves $(F2)$. Lastly, let us prove that $v$ is isolated in $\{r(\beta) \; | \; \beta \in E^{*}v\}$. By assumption there exists an open neighbourhood $V_{v}$ of $v$ in $E^{0}$ with $|V_{v} E^{*}_{\alpha} |< \infty$. It follows that 
$$
N=\{r(\beta) \; | \; \beta \in V_{v} E^{*} v \text{ with } r(\beta)\neq v \}
$$ 
is a finite set in $E^{0}$, and hence the set $V_{v} \setminus N$ is an open set in $E^{0}$ containing $v$, yet which does not contain any other elements of $\{r(\beta) \; | \; \beta \in E^{*}v\}$. In conclusion the vertex $v$ ensures that $E$ is not a free graph which proves the Lemma.
\end{proof}

We say that two loops $\alpha$ and $\beta$ are equivalent if there exists a $k$ such that
$$
\beta = \alpha_{k+1}\alpha_{k+2} \cdots \alpha_{|\alpha|}\alpha_{1} \cdots \alpha_{k} \; .
$$
This condition is equivalent to having $|\alpha|=|\beta|$ and $\sigma^{k}(\alpha^{\infty} )=\beta^{\infty}$ for some $k$. The following observation will become vital later.

\begin{lemma} \label{lemmaequiv}
Let $\alpha$ and $\beta$ be two equivalent loops. If $|VE_{\beta}^{*}|<\infty$ then $|VE_{\alpha}^{*}|<\infty$ for $V\subseteq E^{0}$. In particular then $\alpha$ is summable if and only if $\beta$ is summable.
\end{lemma}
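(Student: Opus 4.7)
My plan is to produce an injection $\phi \colon V E^*_\alpha \hookrightarrow V E^*_\beta$, which immediately yields $|V E^*_\alpha| \leq |V E^*_\beta|$ and hence the first claim. Write $n = |\alpha| = |\beta|$ and $\beta = \alpha_{k+1} \cdots \alpha_n \alpha_1 \cdots \alpha_k$. Given $\gamma \in V E^*_\alpha$, the concatenation $\gamma \alpha_1 \cdots \alpha_k$ is legal because $s(\gamma) = r(\alpha) = r(\alpha_1)$, and it lies in $V E^* r(\beta)$ because $s(\alpha_k) = r(\alpha_{k+1}) = r(\beta)$. Every element of $V E^* r(\beta)$ factors uniquely as $\delta \beta^{m}$ with $m \geq 0$ maximal and $\delta \in V E^*_\beta$ (just take $m$ to be the largest power of $\beta$ appearing as a suffix); set $\phi(\gamma)$ to be this $\delta$.

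The heart of the argument is the identity
\[
\alpha_1 \cdots \alpha_k \, \beta^{j} \;=\; \alpha^{j} \, \alpha_1 \cdots \alpha_k \qquad (j \geq 0),
\]
proved by induction from the case $j=1$, in which $\alpha_1 \cdots \alpha_k \beta = \alpha_1 \cdots \alpha_k \alpha_{k+1} \cdots \alpha_n \alpha_1 \cdots \alpha_k = \alpha \cdot \alpha_1 \cdots \alpha_k$. To show injectivity of $\phi$, suppose $\phi(\gamma) = \phi(\gamma') = \delta$ with $\gamma \alpha_1 \cdots \alpha_k = \delta \beta^{m}$ and $\gamma' \alpha_1 \cdots \alpha_k = \delta \beta^{m'}$, and WLOG $m' \geq m$. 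Then $\gamma' \alpha_1 \cdots \alpha_k = \gamma \alpha_1 \cdots \alpha_k \beta^{m'-m} = \gamma \alpha^{m'-m} \alpha_1 \cdots \alpha_k$ by the identity, so $\gamma' = \gamma \alpha^{m'-m}$. If $m' > m$ this would force $\gamma'$ to end in $\alpha$, contradicting $\gamma' \in E^*_\alpha$; hence $m' = m$ and $\gamma = \gamma'$.

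For the \emph{in particular} statement, observe that the equivalence of loops is symmetric (one checks directly that $\alpha = \beta_{n-k+1} \cdots \beta_n \beta_1 \cdots \beta_{n-k}$), so the analogous injection also runs from $V E^*_\beta$ to $V E^*_\alpha$. Hence finiteness of the two sets is equivalent for every $V \subseteq E^0$, and applying this to the neighbourhoods appearing in Definition \ref{def217a}(b$'$) gives the summability equivalence. The only genuinely creative step is the choice of $\phi$: the naive extension $\gamma \mapsto \gamma \alpha_1 \cdots \alpha_k$ can fail to land in $E^*_\beta$ (if $\gamma$ ends in $\alpha_{k+1}\cdots\alpha_n$), but absorbing the spurious $\beta$-suffixes into the multiplicity $m$ and invoking the commutation identity lets the constraint ``no $\alpha$-suffix'' on $\gamma$ and $\gamma'$ do all the work.
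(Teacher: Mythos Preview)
Your argument for the first claim is correct and starts from the same map as the paper, namely $\gamma \mapsto \gamma\alpha_{1}\cdots\alpha_{k}$. The paper, however, avoids your reduction step and your commutation identity entirely: it simply observes that $\gamma\alpha_{1}\cdots\alpha_{k}$ can never have $\beta^{2}$ as a suffix (since $\gamma\alpha_{1}\cdots\alpha_{k}=\delta\beta^{2}$ would force $\gamma=(\delta\alpha_{k+1}\cdots\alpha_{n})\alpha$), so the trivially injective map $\gamma\mapsto\gamma\alpha_{1}\cdots\alpha_{k}$ already lands in $E_{\beta}^{*}\sqcup\{\delta\beta:\delta\in E_{\beta}^{*}\}$, giving $|VE_{\alpha}^{*}|\leq 2|VE_{\beta}^{*}|$. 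Your route, stripping all $\beta$-suffixes and then using the identity $\alpha_{1}\cdots\alpha_{k}\beta^{j}=\alpha^{j}\alpha_{1}\cdots\alpha_{k}$ to recover injectivity, is a correct alternative that actually yields the sharper bound $|VE_{\alpha}^{*}|\leq |VE_{\beta}^{*}|$, at the cost of a little more work.

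There is, however, a genuine gap in your treatment of the ``in particular'' clause. Summability in Definition~\ref{def217a} requires both condition (b$'$) \emph{and} condition (a), namely that the loop is not a proper power. You only show that (b$'$) transfers between equivalent loops; you never check that (a) does. The paper handles this separately: if $\alpha=\gamma^{n}$ with $n\geq 2$ and $\sigma^{k}(\alpha^{\infty})=\beta^{\infty}$, then $\sigma^{|\gamma|}(\beta^{\infty})=\sigma^{k}(\sigma^{|\gamma|}(\alpha^{\infty}))=\sigma^{k}(\alpha^{\infty})=\beta^{\infty}$, and since $|\gamma|$ divides $|\beta|$ this forces $\beta=(\beta_{1}\cdots\beta_{|\gamma|})^{n}$, so $\beta$ fails (a) as well. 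Without this (or an equivalent observation), your ``in particular'' is not yet justified.
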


\begin{proof}
Assume that $|\alpha|=|\beta|=n$ and $\beta=\alpha_{k+1}\cdots \alpha_{n}\alpha_{1} \cdots \alpha_{k}$. If $\gamma \in VE_{\alpha}^{*}$ then $\gamma \alpha_{1}\cdots \alpha_{k} \in E^{*}r(\beta)$. If $\gamma \alpha_{1}\cdots \alpha_{k}=\delta \beta^{2}$ for some $\delta$ then we have that
$$
\gamma \alpha_{1}\cdots \alpha_{k}= \delta \alpha_{k+1}\cdots \alpha_{n}\alpha_{1} \cdots \alpha_{k} \alpha_{k+1}\cdots \alpha_{n}\alpha_{1} \cdots \alpha_{k} \Rightarrow
\gamma = (\delta \alpha_{k+1}\cdots \alpha_{n} )\alpha \; ,
$$
which contradicts that $\gamma \in E_{\alpha}^{*}$. Hence we have an injective map
$$
VE_{\alpha}^{*} \ni \gamma \mapsto  \gamma \alpha_{1}\cdots \alpha_{k} \in E_{\beta}^{*} \sqcup \{ \delta \beta \; | \; \delta \in E_{\beta}^{*} \} \; ,
$$
and hence $|VE_{\alpha}^{*}|<\infty$ if $|VE_{\beta}^{*}|<\infty$. By symmetry it follows that $\alpha$ satisfies $(b')$ if and only if $\beta$ does. If $\alpha=\gamma^{n}$ with $n\geq 2$ and $\sigma^{k}(\alpha^{\infty} )=\beta^{\infty}$ then 
$$
\sigma^{|\gamma|}(\beta^{\infty}) = \sigma^{|\gamma|}(\sigma^{k}(\alpha^{\infty} ))=\sigma^{k}(\alpha^{\infty} ) \; ,
$$
which implies that $\beta$ also does not satisfy $(a)$ in Definition \ref{def217a}. This proves the Lemma.
\end{proof}

To analyse the harmonic measures we will need to work with the \emph{eventually cyclic paths}, which are paths on the form $\beta \alpha^{\infty}\in E^{\infty}$ where $\alpha$ is a loop and $\beta \in E^{*}$ satisfies that $s(\beta)=r(\alpha)$. We set $\mathcal{C}$ to be the set of eventually cyclic paths, i.e.
$$
\mathcal{C} = \left \{ x\in \partial E \; | \; \exists \alpha, \beta \in E^{*} \text{ with $\alpha$ a loop and $s(\beta)=r(\alpha)$ s.t. } x=\beta\alpha^{\infty} \right \} \; ,
$$
and we define for any subset $S \subseteq E^{0}$ the set $\mathcal{C}S$ as
\begin{align*}
\mathcal{C}S:&=  \left\{ x\in \mathcal{C} \; | \; s(x_{i}) \in S \text{ for infinitely many $i\in \mathbb{N}$}   \right\} \\
&= \left\{ x\in \mathcal{C} \; | \; x=\beta \alpha^{\infty} \text{ with $\alpha$ a loop, $\beta \in E^{*}r(\alpha)$ and $s(\alpha_{i}) \in S$ for some $1\leq i \leq |\alpha|$}   \right\} \; .
\end{align*}

\begin{lemma} \label{lemborelep}
If $S \subseteq E^{0}$ is a Borel set, then the set $\mathcal{C}S$ is an invariant Borel subset of $\partial E$. 
\end{lemma}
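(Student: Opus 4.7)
Since every eventually cyclic path is infinite, $\mathcal{C}S \subseteq \mathcal{C} \subseteq E^{\infty}$; in particular $\mathcal{C}S \cap E^{0} = \emptyset$, so the invariance condition \eqref{eqdefinv} reduces to showing $\sigma^{-1}(\mathcal{C}S) = \mathcal{C}S$. The plan is to establish Borel measurability and invariance separately, both by routine verification.

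For Borel measurability, I would first observe that $E^{\infty} = \partial E \setminus E^{*}_{sng}$ is Borel by Lemma \ref{lemma11}, and that a path $x \in E^{\infty}$ is eventually cyclic precisely when $\sigma^{n}(x) = \sigma^{n+k}(x)$ for some $n \geq 0$, $k \geq 1$. Hence
$$
\mathcal{C} \;=\; \bigcup_{n \geq 0}\bigcup_{k \geq 1}\bigl\{\, x \in E^{\infty} \,:\, \sigma^{n}(x) = \sigma^{n+k}(x)\,\bigr\},
$$
and each set in this countable union is closed in $E^{\infty}$ because $\sigma$ is continuous on $\partial E \setminus E^{0}$ and $\partial E$ is Hausdorff, hence Borel in $\partial E$. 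Next, for each $i \geq 1$ I would show that $B_{i} := \{\, x \in \partial E \,:\, |x| \geq i,\ s(x_{i}) \in S\,\}$ is Borel. Writing $s_{i}: E^{i} \to E^{0}$ for the continuous source map on length-$i$ paths, one checks directly from the definition of $Z(\cdot)$ that $B_{i} = Z(s_{i}^{-1}(S))$, and since $Z(s_{i}^{-1}(V))$ is open whenever $V \subseteq E^{0}$ is open, a standard monotone class argument promotes this to Borel $S$. Then
$$
\mathcal{C}S \;=\; \mathcal{C} \cap \bigcap_{N \geq 1}\bigcup_{i \geq N} B_{i}
$$
is Borel.

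For invariance, if $y = \beta\alpha^{\infty} \in \mathcal{C}S$, then $\sigma(y)$ is again an infinite eventually cyclic path with the same loop $\alpha$ (it equals $\sigma(\beta)\alpha^{\infty}$ when $|\beta| \geq 1$, and $\alpha_{2}\cdots\alpha_{|\alpha|}\alpha^{\infty}$ when $|\beta|=0$), and the index shift $s(\sigma(y)_{i}) = s(y_{i+1})$ transports the ``infinitely often in $S$'' condition to $\sigma(y)$. Conversely, if $y \in \partial E \setminus E^{0}$ satisfies $\sigma(y) \in \mathcal{C}S$, then $|\sigma(y)| = \infty$ forces $|y| = \infty$, and prepending the edge $y_{1}$ to the eventually cyclic path $\sigma(y)$ yields an eventually cyclic $y$, again with the $S$-frequency condition preserved. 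The only step requiring genuine care is the passage from open to Borel $S$ in the description of $B_{i}$, but this is immediate once one observes that $\{V \subseteq E^{0} : Z(s_{i}^{-1}(V)) \text{ is Borel in } \partial E\}$ is a $\sigma$-algebra containing the open sets.
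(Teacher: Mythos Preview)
Your proof is correct and follows essentially the same strategy as the paper: write $\mathcal{C}S$ as a countable combination of sets that are evidently Borel, and check invariance by hand. The one organizational difference is that the paper bundles the periodicity and the $S$-condition together by setting
\[
M_{n}=\{x\in E^{\infty}:\sigma^{n}(x)=x\}\cap\bigcup_{i=0}^{n-1}\{x\in E^{\infty}:r\circ\sigma^{i}(x)\in S\}
\]
and then observes $\mathcal{C}S=\bigcup_{n\geq 1}\bigcup_{m\geq 0}\sigma^{-m}(M_{n})$; because $r\circ\sigma^{i}:E^{\infty}\to E^{0}$ is continuous, the Borel-ness of each $M_{n}$ for Borel $S$ is immediate, with no monotone-class step needed. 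Your decomposition $\mathcal{C}S=\mathcal{C}\cap\bigcap_{N}\bigcup_{i\geq N}B_{i}$ separates the two ingredients and therefore requires the extra $\sigma$-algebra argument for $B_{i}=Z(s_{i}^{-1}(S))$, but this is routine and your justification (the collection of $V$ with $Z(s_{i}^{-1}(V))$ Borel is a $\sigma$-algebra containing the open sets) is sound.
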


\begin{proof}
It is straightforward to check that $\mathcal{C}S$ is invariant under $\sigma$. Take a $n\in \mathbb{N}$. The map $r\circ \sigma^{i}: E^{\infty} \to E^{0}$ is continuous for any $i\in \mathbb{N}$, so the set
$$
M_{n}:=\{x\in E^{\infty} \; | \; \sigma^{n}(x)=x \} \cap \bigcup_{i=0}^{n-1} \{ x\in E^{\infty} \; | \;  r\circ \sigma^{i}(x) \in S \}
$$
is Borel in $E^{\infty}$ and hence in $\partial E$ by Lemma \ref{lemma11}. Since
$$
\mathcal{C} S = \bigcup_{n=1}^{\infty} \bigcup_{m=0}^{\infty}
\sigma^{-m}(M_{n}) \; 
$$
then $\mathcal{C}S$ is a Borel subset of $\partial E$, proving the Lemma.
\end{proof}

\section{Harmonic measures on eventually cyclic paths}\label{subsection23}
We will now turn to the analysis of harmonic measures.
We will not achieve a complete description of all the extremal harmonic measures as was the case for boundary measures, but we will describe all extremal harmonic measures concentrated on the eventually cyclic paths. It turns out that the existence of non gauge-invariant tracial weights is closely relate to the existence of such measures.

\begin{prop}\label{proptracenong}
Let $E$ be a second countable topological graph. Assume that $\nu$ is a non-zero extremal harmonic measure on $\partial E$ satisfying that $\nu(\mathcal{C}^{C})=0$. Then there exists a summable loop $\alpha$ in $E$ such that $\nu$ is concentrated on the countable set
$$
\left \{ \beta \alpha^{\infty} \; | \; \beta \in E_{\alpha}^{*}  \right \} \; .
$$
This implies that there exists a $r> 0$ such that
$$
\nu = r \cdot  \sum_{\beta \in E_{\alpha}^{*}  } \delta_{\beta \alpha^{\infty}}  \; .
$$ 
\end{prop}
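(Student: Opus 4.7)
The plan is to mimic the strategy used for extremal boundary measures in Proposition \ref{propsurjbound} / Theorem \ref{thmsingbi}: first localize $\nu$ to a single vertex $v$, then combine ergodicity with Theorem \ref{thmgroupoiddescrip} to extract a single loop $\alpha$, and finally read off summability and the explicit formula from regularity of $\nu$.

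First I would localize to a single vertex. Using second countability of $E^{0}$, pick open sets $\{U_{i}\}_{i \in \mathbb{N}}$ with compact closure covering $E^{0}$, disjointify them into Borel sets $V_{n} = U_{n} \setminus \bigcup_{i<n} U_{i}$, and observe by Lemma \ref{lemborelep} that each $\mathcal{C}V_{n}$ is an invariant Borel set and that they partition $\mathcal{C}$. Ergodicity (via \eqref{eqinviserg}) gives a unique $m$ with $\nu((\mathcal{C}V_{m})^{C}) = 0$. Since $\overline{U_{m}}$ is compact Hausdorff second countable, hence metrizable, I refine inductively by covering each stage with finitely many closed pieces of diameter at most $1/i$ (possible by compactness) and using ergodicity on the corresponding invariant Borel sets to keep one piece of full measure. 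This yields a decreasing sequence of closed sets $W_{i} \subseteq \overline{U_{m}}$ of diameter at most $1/i$ with $\nu((\mathcal{C}W_{i})^{C}) = 0$. Compactness forces $\bigcap_{i} W_{i} = \{v\}$ for some $v$, and because an eventually cyclic path visits only finitely many distinct vertices in its periodic tail, the intersection $\bigcap_{i} \mathcal{C}W_{i}$ reduces to $\mathcal{C}\{v\}$, so $\nu((\mathcal{C}\{v\})^{C}) = 0$.

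Next, Theorem \ref{thmgroupoiddescrip} provides a subgroup $H \leq \mathbb{Z}$ with $\nu(\{x : \text{Per}(x) = H\}^{C}) = 0$, and since every eventually cyclic path has non-trivial period, $H = p\mathbb{Z}$ for some $p \geq 1$. For each cyclic equivalence class of a primitive loop $\alpha$ at $v$ of length $p$, set $M_{[\alpha]} = \{\beta \alpha^{\infty} : \beta \in E_{\alpha}^{*}\}$. A direct rewriting --- if $\alpha'$ is a cyclic shift of $\alpha$ then $\beta' (\alpha')^{\infty}$ can be written as $\hat{\beta} \alpha^{\infty}$ by absorbing the initial portion of $\alpha$ into $\beta'$ --- identifies $M_{[\alpha]}$ with $\bigcup_{k \geq 0} \bigcup_{\ell = 0}^{p-1} \sigma^{-k}(\{\sigma^{\ell}(\alpha^{\infty})\})$, so it is Borel and $\sigma$-invariant. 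These sets are pairwise disjoint across cyclic classes and their union is exactly $\mathcal{C}\{v\} \cap \{\text{Per} = p\mathbb{Z}\}$, so ergodicity forces $\nu$ to be concentrated on one $M_{[\alpha]}$; pick $\alpha$ in this class.

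Finally, because $s_{n} : E^{n} \to E^{0}$ is a local homeomorphism and $E^{n}$ is second countable, each fibre $s_{n}^{-1}(v)$ is a discrete subset of a second countable space and is therefore countable; consequently $E^{*}v$, and hence $M_{[\alpha]}$, is countable. Invariance of $\nu$ applied to singletons gives $\nu(\{\beta \alpha^{\infty}\}) = \nu(\{\alpha^{\infty}\}) =: r$ for every $\beta \in E_{\alpha}^{*}$, and $r > 0$ since $\nu$ is non-zero on the countable set $M_{[\alpha]}$. For any $w \in E^{0}$, regularity of $\nu$ yields an open neighbourhood $V_{w}$ of $w$ with $\nu(Z(V_{w})) < \infty$, and concentration on $M_{[\alpha]}$ gives $\nu(Z(V_{w})) = r \cdot |V_{w} E_{\alpha}^{*}|$, so $|V_{w} E_{\alpha}^{*}| < \infty$, establishing summability of $\alpha$. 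The formula $\nu = r \sum_{\beta \in E_{\alpha}^{*}} \delta_{\beta \alpha^{\infty}}$ is then immediate. The main obstacle I expect is the middle step: verifying carefully that the rewriting into the canonical form $\hat{\beta}\alpha^{\infty}$ is consistent with the $\sigma$-orbit structure and that $M_{[\alpha]}$ and $M_{[\alpha']}$ are really disjoint for inequivalent primitive loops at $v$, especially when $\alpha$ visits $v$ at intermediate positions.
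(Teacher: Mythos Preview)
Your proposal follows the same three-step strategy as the paper: localize to a vertex $v$ via shrinking sets and ergodicity, isolate a single loop via ergodicity, and read off summability and the explicit formula from invariance and regularity. The only real variation is the middle step: the paper enumerates all loops $\alpha^{i}$ at $v$, uses ergodicity to concentrate $\nu$ on one set $\{\beta(\alpha^{i})^{\infty}:\beta\in E^{*}v\}$, and then replaces $\alpha$ by the loop of minimal length $n$ with $\sigma^{n}(\alpha^{\infty})=\alpha^{\infty}$; you instead invoke Theorem~\ref{thmgroupoiddescrip} to extract the period $p$ first and then enumerate cyclic classes of primitive loops of length $p$. Both routes work; the paper's is slightly more self-contained since it does not appeal to Theorem~\ref{thmgroupoiddescrip}.

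There is one point you use without justification: the counting $\nu(Z(V_{w}))=r\cdot|V_{w}E_{\alpha}^{*}|$, and indeed the formula $\nu=r\sum_{\beta\in E_{\alpha}^{*}}\delta_{\beta\alpha^{\infty}}$ itself, require that the parametrization $E_{\alpha}^{*}\ni\beta\mapsto\beta\alpha^{\infty}$ be \emph{injective}. The paper proves this explicitly: if $\beta\alpha^{\infty}=\delta\alpha^{\infty}$ with $|\delta|<|\beta|$, write $\beta=\delta\beta'$ so that $\beta'\alpha^{\infty}=\alpha^{\infty}$; since $\beta\in E_{\alpha}^{*}$ one gets $\beta'=\alpha^{k}\zeta$ with $0<|\zeta|<|\alpha|$, whence $\zeta^{\infty}=\alpha^{\infty}$, contradicting primitivity of $\alpha$. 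Without injectivity, $|V_{w}E_{\alpha}^{*}|$ could be infinite while its image under $\beta\mapsto\beta\alpha^{\infty}$ is finite, and summability would not follow. The concerns you flag about disjointness of the $M_{[\alpha]}$ and the rewriting into canonical form are, by contrast, routine once primitivity is in hand.
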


\begin{proof}
Let $\{V_{i} \}_{i\in \mathbb{N}}$ be a countable basis of $E^{0}$ of open sets with compact closures. Then 
$$
\mathcal{C} = \bigcup_{i=1}^{\infty} \mathcal{C} V_{i} \; .
$$
Since $\nu$ is extremal and $\mathcal{C} V_{i} $ is an invariant Borel set by Lemma \ref{lemborelep}, there exists some $j$ with $\nu( (\mathcal{C} \overline{V_{j}})^{C} )=0$. Since $\overline{V_{j}}$ is a second countable compact Hausdorff space in the relative topology, we can view the topology on it as generated by a metric. We can therefore find a sequence $\{W_{i} \}$ of Borel sets in $\overline{V_{j}}$ such that $\nu((\mathcal{C} W_{i})^{C})=0$ for all $i$, such that $W_{i+1} \subseteq W_{i}$ for all $i$ and such that the diameter of $W_{i}$ is less than $1/i$. It follows that
$$
0= \nu\Big( \bigcup_{i\in \mathbb{N}} (\mathcal{C} W_{i})^{C}   \Big) 
= \nu \Big (  \Big( \bigcap_{i\in \mathbb{N}} \mathcal{C} W_{i} \Big)^{C} \Big)
= \nu \Big (  \Big( \mathcal{C} \bigcap_{i\in \mathbb{N}} W_{i} \Big)^{C} \Big)    \; ,
$$
but since $W_{i}$ has diameter less than $1/i$ and $\nu \neq 0$, this implies that $\bigcap_{i\in \mathbb{N}} W_{i}=\{v\}$ for some $v\in E^{0}$. We now have that
$$
0= \nu((\mathcal{C} \{v\} )^{C} ) \; .
$$
This implies in particular that $\mathcal{C} \{v\} \neq \emptyset$. Since $s$ is a local homeomorphism and $E$ is second countable, there exists an at most countable collection $\{\alpha^{i}\}_{i\in I}$ of loops satisfying that $s(\alpha^{i})=v$. It follows by definition of $\mathcal{C}$ that we can write
$$
\mathcal{C} \{v\} = \bigcup_{i\in I} \left \{   \beta (\alpha^{i})^{\infty} \; | \; \beta \in E^{*}v  \right \} \; ,
$$
and since each of these sets are invariant, there is a loop $\alpha$ such that $r(\alpha)=s(\alpha)=v$ and 
$$
\nu\left (  \{   \beta \alpha^{\infty} \; | \; \beta \in E^{*}r(\alpha)   \}^{C} \right) =0  \; .
$$
It follows that $\nu$ is concentrated on $\{ \beta \alpha^{\infty} \; | \; \beta \in E^{*}r(\alpha)   \}$. Let $n\in \mathbb{N}$ be the smallest number such that $\sigma^{n}(\alpha^{\infty})=\alpha^{\infty}$. By possible substituting $\alpha$ we can assume that $|\alpha|=n$. We claim that the surjective map
$$
E_{\alpha}^{*} \ni \beta \; \mapsto \; \beta \alpha^{\infty} \in \{   \beta \alpha^{\infty} \; | \; \beta \in E^{*}r(\alpha)   \}
$$ 
is also injective. If $\beta\alpha^{\infty}=\delta \alpha^{\infty}$ with $|\delta|<|\beta|$ then $\beta = \delta \beta '$, and hence using $\sigma^{|\delta|}$ on $\beta\alpha^{\infty}=\delta \alpha^{\infty}$ we get that $\beta'\alpha^{\infty}=\alpha^{\infty}$. Since $\beta'\in E_{\alpha}^{*}$ we can write $\beta'=\alpha^{k} \zeta$ with $0<|\zeta |< |\alpha|$, which implies that $\zeta \alpha^{\infty}=\alpha^{\infty}$. Iterating this equality we see that $\zeta^{\infty}=\alpha^{\infty}$, contradicting our choice of $\alpha$. Hence the map is surjective, and since $\nu(\{ \beta \alpha^{\infty}\})=\nu(\{ \alpha^{\infty}\})$ we get that
$$
\nu= \sum_{\beta \in E_{\alpha}^{*}} \delta_{\beta \alpha^{\infty}} \;  \nu(\{ \beta \alpha^{\infty}\})
= \nu(\{ \alpha^{\infty}\}) \cdot \sum_{\beta \in E_{\alpha}^{*}} \delta_{\beta \alpha^{\infty}}  \; .
$$
We now only have to prove that $\alpha$ is a summable loop. Fix therefore a $w\in E^{0}$. Since $\nu$ is regular on $\partial E$, there exists an open neighbourhood $V_{w}$ of $w$ in $E^{0}$ with $\overline{V_{w}}$ compact, and hence $\nu(Z(V_{w}))<\infty$. Since
$$
\nu(Z(V_{w})) =  r\cdot \sum_{\beta \in E_{\alpha}^{*}} \delta_{\beta \alpha^{\infty}}(Z(V_{w}))
= r \cdot | V_{w} E_{\alpha}^{*} | \; , 
$$
then $\alpha$ is in fact a summable loop.
\end{proof}

Similarly to the case of boundary measures, we can now prove that equivalence classes of summable loops in the graph are in bijective corresponding with the rays of extremal harmonic measures concentrated on eventually cyclic paths. 

\begin{thm}\label{thmcyclicbij}
Let $E$ be a second countable topological graph. There is a bijection between the rays of non-zero extremal harmonic measures concentrated on $\mathcal{C}$ and equivalence classes of summable loops $\alpha$ in $E$. The ray corresponding to such a loop $\alpha$ is on the form $\{ r \cdot \nu \; | \; r\geq0  \}$, where
\begin{equation} \label{eqcyclicmeasure}
\nu = \sum_{\beta \in E_{\alpha}^{*} } \delta_{\beta \alpha^{\infty}}  \; .
\end{equation}
\end{thm}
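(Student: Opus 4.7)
The plan is to set up the map $[\alpha]\mapsto\{r\nu_\alpha:r\geq 0\}$ where $\nu_\alpha:=\sum_{\beta\in E_\alpha^*}\delta_{\beta\alpha^\infty}$, and verify it is a bijection. The surjectivity onto the set of rays of non-zero extremal harmonic measures concentrated on $\mathcal{C}$ is already delivered by Proposition \ref{proptracenong}, so what is left is (I) to check that $\nu_\alpha$ really is an extremal harmonic invariant measure concentrated on $\mathcal{C}$ for every summable loop $\alpha$, (II) that the map factors through the equivalence relation on loops, and (III) that it is injective on equivalence classes.

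For (I), the argument parallels Theorem \ref{thmsingbi}. Regularity follows from condition $(b')$ of Definition \ref{def217a}: for any $w\in E^0$ the neighbourhood $V_w$ with $|V_w E_\alpha^*|<\infty$ satisfies $\nu_\alpha(Z(V_w))=|V_w E_\alpha^*|<\infty$, and these sets cover $\partial E$. The injectivity argument of Proposition \ref{proptracenong} (using condition $(a)$, i.e.\ minimality of the period of $\alpha^\infty$) shows that $\beta\mapsto\beta\alpha^\infty$ is injective, so $\nu_\alpha$ is the counting measure on its countable support $S_\alpha$. Since $\alpha$ is simple by Lemma \ref{lemmaunfree}, $\sigma$ maps $S_\alpha$ into $S_\alpha$; conversely, if $x\in\partial E\setminus E^0$ and $\sigma(x)=\beta'\alpha^\infty\in S_\alpha$, then $x=e\beta'\alpha^\infty$ for some edge $e$, and iteratively stripping any trailing copies of $\alpha$ from $e\beta'$ writes $e\beta'=\gamma\alpha^m$ with $\gamma\in E_\alpha^*$, so $x=\gamma\alpha^\infty\in S_\alpha$. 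Invariance in the sense of \eqref{eqinvmeasure} then reduces to invariance of a counting measure under an injective shift. Extremality follows because any $\sigma$-invariant Borel set meeting $S_\alpha$ must contain $\alpha^\infty$ by forward iteration of $\sigma$, and then by iterated $\sigma$-preimages must contain every $\gamma\alpha^\infty$ with $\gamma\in E^*r(\alpha)$, which exhausts $S_\alpha$.

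For (II), if $\alpha'=\alpha_{k+1}\cdots\alpha_n\alpha_1\cdots\alpha_k$ then $(\alpha')^\infty=\sigma^k(\alpha^\infty)$, and for each $\beta\in E_\alpha^*$ one rewrites $\beta\alpha^\infty=(\beta\alpha_1\cdots\alpha_k)(\alpha')^\infty$ and strips any trailing copies of $\alpha'$ to obtain a representation $\beta''(\alpha')^\infty$ with $\beta''\in E_{\alpha'}^*$; the symmetric argument shows $S_\alpha=S_{\alpha'}$ as subsets of $\partial E$, and since both measures are the counting measure on this common set, $\nu_\alpha=\nu_{\alpha'}$. For (III), from $\nu_\alpha=c\nu_{\alpha'}$ with $c>0$ comparing point masses forces $c=1$ and $S_\alpha=S_{\alpha'}$; in particular $\alpha^\infty=\beta'(\alpha')^\infty$ and $(\alpha')^\infty=\gamma\alpha^\infty$ for some $\beta'\in E_{\alpha'}^*$, $\gamma\in E_\alpha^*$. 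Condition $(a)$ applied to both loops, together with the elementary fact that a periodic sequence and any of its shifts share the same minimal period, forces $|\alpha|=|\alpha'|=:n$; then writing $k=|\beta'|\bmod n$ identifies $\alpha'$ as the cyclic shift $\alpha_{k+1}\cdots\alpha_n\alpha_1\cdots\alpha_k$, i.e.\ $\alpha\sim\alpha'$.

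The main obstacle I anticipate is the invariance step in (I), specifically verifying $\sigma^{-1}(S_\alpha)\cap(\partial E\setminus E^0)\subseteq S_\alpha$: prepending an edge to an element of $E_\alpha^*$ may create a trailing copy of $\alpha$, so one needs a bounded combinatorial stripping of trailing $\alpha$-blocks (at most $\lceil|e\beta'|/|\alpha|\rceil$ iterations) to land back in $E_\alpha^*$. The rest is largely formal once Proposition \ref{proptracenong} is at hand and the support structure of $\nu_\alpha$ is made explicit.
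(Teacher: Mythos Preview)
Your proposal is correct and follows essentially the same approach as the paper: define $\nu_\alpha$, check it is a regular extremal invariant measure concentrated on $\mathcal{C}$ by the same kind of argument as in Theorem~\ref{thmsingbi}, invoke Proposition~\ref{proptracenong} for surjectivity, observe that equivalent loops give the same support set and hence the same measure, and deduce injectivity from $\alpha^\infty$ lying in the support of the other measure. The paper's proof is terser, in particular it leaves the equal-period conclusion in the injectivity step implicit, whereas you spell out the minimal-period argument and the invariance of the support set $S_\alpha$ under $\sigma$ more carefully; your treatment of the stripping of trailing $\alpha$-blocks for $\sigma^{-1}(S_\alpha)\subseteq S_\alpha$ is exactly the missing detail behind the paper's appeal to Theorem~\ref{thmsingbi}.
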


\begin{proof}
Let $\alpha \in E^{*}$ be a summable loop. The set $E_{\alpha}^{*} $ is countable by second countability of $E$. We define the measure $\nu$ as in \eqref{eqcyclicmeasure}. It follows as in the proof of Theorem \ref{thmsingbi} that $\nu$ is non-zero, regular, extremal, invariant and satisfies that $\nu(\mathcal{C}^{C})=0$. If two loops $\alpha$ and $\beta$ are equivalent, they clearly give rise to the same measure. Hence the map which sends an equivalence class of a summable loop $\alpha$ to a measure $\nu$ is well defined, and it is surjective by Proposition \ref{proptracenong}. To see that it is injective let $\nu_{i}$ be defined by the loop $\alpha^{i}$ for $i=1,2$ and assume $\nu_{1}=\nu_{2}$. Then $\nu_{2}((\alpha^{1})^{\infty})>0$, and hence $(\alpha^{1})^{\infty}=\beta (\alpha^{2})^{\infty}$ for some $\beta \in E_{\alpha_{2}}^{*}$. This implies that $\alpha_{1}$ and $\alpha_{2}$ are equivalent because $\sigma^{|\beta|}((\alpha^{1})^{\infty})=(\alpha^{2})^{\infty}$. Hence our map is a bijection which proves the Theorem.
\end{proof}

\begin{remark}
If $E^{0}$ is compact we can exchange the condition: For any $w\in E^{0}$ there exists an open neighbourhood $V_{w}$ of $w$ in $E^{0}$ satisfying $|V_{w}E_{\alpha}^{*}|< \infty$, with the condition that $|E_{\alpha}^{*}| < \infty$.

By Lemma \ref{lemmaequiv} then $|E_{\alpha}^{*}|<\infty$ if and only if $|E_{\beta}^{*}|<\infty$ for two equivalent loops $\alpha$ and $\beta$. It follows from Theorem \ref{thmcyclicbij} that the extremal rays of finite harmonic measures concentrated on $\mathcal{C}$ maps bijectively onto the equivalence classes of summable loops $\alpha$ satisfying the extra condition $|E_{\alpha}^{*}|<\infty$. This observation gives the Corollary below.
\end{remark}

\begin{cor} \label{core54}
Let $E$ be a second countable topological graph. There is a bijection between the set of extremal harmonic probability measures concentrated on $\mathcal{C}$ and the set equivalence classes of summable loops $\alpha$ with $|E_{\alpha}^{*}|< \infty$. The extremal harmonic probability measures corresponding to such a loop is on the form
$$
\nu = \frac{1}{|E_{\alpha}^{*}|}\sum_{\beta \in E_{\alpha}^{*}} \delta_{\beta \alpha^{\infty}} \; .
$$
\end{cor}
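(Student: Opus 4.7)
The plan is to derive this as a direct normalization consequence of Theorem \ref{thmcyclicbij}. That theorem already supplies a bijection between equivalence classes of summable loops $\alpha$ and rays $\{ r \cdot \nu_{\alpha} \mid r \geq 0 \}$ of non-zero extremal harmonic measures on $\mathcal{C}$, where $\nu_{\alpha} = \sum_{\beta \in E_{\alpha}^{*}} \delta_{\beta \alpha^{\infty}}$. A ray of measures contains a (unique) probability measure if and only if the measure $\nu_{\alpha}$ has finite total mass, in which case that probability measure is $\nu_{\alpha}/\nu_{\alpha}(\partial E)$.

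First I would compute $\nu_{\alpha}(\partial E)$. The injectivity of the map $E_{\alpha}^{*} \ni \beta \mapsto \beta \alpha^{\infty}$, established inside the proof of Proposition \ref{proptracenong}, shows that $\nu_{\alpha}$ is a sum of Dirac measures at distinct points, indexed by $E_{\alpha}^{*}$. Hence $\nu_{\alpha}(\partial E) = |E_{\alpha}^{*}|$, and $\nu_{\alpha}$ is finite precisely when $|E_{\alpha}^{*}|<\infty$.

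Next, for the statement to be well-posed on equivalence classes, I would invoke Lemma \ref{lemmaequiv}, which guarantees that $|E_{\alpha}^{*}|<\infty$ if and only if $|E_{\beta}^{*}|<\infty$ whenever $\alpha$ and $\beta$ are equivalent summable loops. This is precisely the content emphasized in the Remark preceding the corollary, and it ensures that restricting the bijection of Theorem \ref{thmcyclicbij} to those classes with $|E_{\alpha}^{*}|<\infty$ is legitimate.

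Combining these observations, the map sending the equivalence class of a summable loop $\alpha$ with $|E_{\alpha}^{*}|<\infty$ to the normalized measure
\[
\nu \; = \; \frac{1}{|E_{\alpha}^{*}|} \sum_{\beta \in E_{\alpha}^{*}} \delta_{\beta \alpha^{\infty}}
\]
is the desired bijection, with inverse obtained by passing from a probability measure back to its generating ray. There is no real obstacle here: the entire argument is a straightforward normalization built on top of Theorem \ref{thmcyclicbij} and Lemma \ref{lemmaequiv}, with no additional structural input required.
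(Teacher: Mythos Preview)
Your proposal is correct and matches the paper's own reasoning, which is contained in the Remark immediately preceding the corollary: restrict the bijection of Theorem \ref{thmcyclicbij} to the rays carrying a finite measure, invoke Lemma \ref{lemmaequiv} so that $|E_{\alpha}^{*}|<\infty$ is well-defined on equivalence classes, and normalize. Your explicit computation of the total mass via the injectivity of $\beta\mapsto\beta\alpha^{\infty}$ is a small clarifying detail the paper leaves implicit, but the approach is the same.
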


\section{Gauge-invariant tracial weights}\label{subsection24}

With the analysis of measures concentrated on eventually cyclic paths from Section \ref{subsection23} we can now give criteria for when there exists non gauge-invariant tracial weights.

\begin{prop} \label{propdecomp}
Assume that $E$ is a second countable topological graph. If there exists a tracial weight on $C^{*}(E)$ which is not gauge-invariant, then there exists a summable loop in $E$.

If there exists a tracial state on $C^{*}(E)$ which is not gauge-invariant, then there exists a summable loop $\alpha$ in $E$ with $|E_{\alpha}^{*}|<\infty$.
\end{prop}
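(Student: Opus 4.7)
The plan is to reduce the problem to an extremal invariant measure supported on $\mathcal{C}$ and then invoke Theorem \ref{thmcyclicbij}. The bridge between tracial weights and loop combinatorics is Theorem \ref{thmgroupoiddescrip}: if $\nu$ is an extremal invariant measure with associated subgroup $H=\{0\}$, then $C^{*}(H)=\mathbb{C}$ admits a unique state and the only tracial weight with measure $\nu$ is the gauge-invariant one $a\mapsto \int P(a)\,\mathrm{d}\nu$. Hence an extremal non gauge-invariant tracial weight forces $H\neq\{0\}$, which in turn means $\nu$-a.e.\ $\alpha$ admits $k>l$ with $\sigma^{k}(\alpha)=\sigma^{l}(\alpha)$, so $\sigma^{l}(\alpha)$ is $\sigma^{k-l}$-periodic and $\alpha$ lies in $\mathcal{C}\cap E^{\infty}$.

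For a general non gauge-invariant tracial weight $\psi$, I would ergodically decompose the associated measure $\nu_{\psi}$ and apply the previous observation componentwise. If every ergodic component $\nu_{t}$ had trivial subgroup $H_{t}=\{0\}$, each would only admit the gauge-invariant weight $\int P(\cdot)\,\mathrm{d}\nu_{t}$, and reassembling would give $\psi=\int P(\cdot)\,\mathrm{d}\nu_{\psi}$, contradicting the non gauge-invariance of $\psi$. Therefore some ergodic component $\nu_{0}$ has non-trivial period subgroup, and is in particular concentrated on $\mathcal{C}$. Because any finite path $\alpha\in E^{*}_{sng}$ of length $n$ has $\sigma^{k}(\alpha)$ of length $n-k$ for $0\leq k\leq n$, which forces $\text{Per}(\alpha)=\{0\}$, boundary measures always give trivial $H$, so $\nu_{0}$ must be harmonic. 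Applying Theorem \ref{thmcyclicbij} (equivalently Proposition \ref{proptracenong}) to $\nu_{0}$ now produces a summable loop.

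For the tracial state case, $\nu_{\psi}$ is a probability measure, so its ergodic decomposition yields ergodic probability measures; hence $\nu_{0}$ is itself a probability measure and Corollary \ref{core54} upgrades the conclusion to a summable loop $\alpha$ with $|E_{\alpha}^{*}|<\infty$. The main obstacle I anticipate is making rigorous the step that non gauge-invariance of $\psi$ forces some ergodic component of $\nu_{\psi}$ to have non-trivial period subgroup; carrying this out cleanly requires either invoking a decomposition result for tracial weights on \'etale groupoids from \cite{C}, or arguing directly that a tracial weight whose associated measure is supported on paths with trivial period must coincide with $a\mapsto \int P(a)\,\mathrm{d}\nu_{\psi}$.
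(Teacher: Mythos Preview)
Your core insight matches the paper's: an extremal tracial weight that is not gauge-invariant must have $H\neq\{0\}$ in Theorem~\ref{thmgroupoiddescrip}, which forces $\nu$ to be concentrated on $\mathcal{C}\cap E^{\infty}$, and then Proposition~\ref{proptracenong}/Corollary~\ref{core54} supply the summable loop. The difference is entirely in how one passes from an arbitrary non gauge-invariant $\psi$ to an extremal one, and the gap you flagged is exactly where your argument is incomplete.

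Ergodically decomposing $\nu_{\psi}$ does \emph{not} decompose $\psi$: the map $\psi\mapsto\nu_{\psi}$ is not injective, so even if every ergodic piece $\nu_{t}$ admitted only the gauge-invariant weight $\int P(\cdot)\,\mathrm{d}\nu_{t}$, there is no a priori reason that $\psi$ is obtained by ``reassembling'' those. Your option (b) can be made to work, but it amounts to proving that any tracial weight whose measure is concentrated on $\{\alpha:\text{Per}(\alpha)=\{0\}\}$ factors through $P$; this is a Neshveyev-type statement that you would have to import or reprove, and for \emph{weights} (as opposed to states) even the ergodic decomposition of the possibly infinite $\nu_{\psi}$ needs justification.

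The paper sidesteps all of this by working at the level of weights rather than measures and using Krein--Milman to produce an extremal tracial weight directly. For the weight case it fixes a strictly positive $h\in C_{0}(\partial E)$ with $\omega(h)=1$, shows that $\mathcal{N}=\{\psi\in\mathcal{W}(C^{*}(E)):\psi(h)\leq 1\}$ is compact (this is where the strict positivity of $h$ is used, via the hereditary subalgebras $\overline{E_{n}C^{*}(E)E_{n}}$ from \cite{C}), picks $f\in C_{c}(\mathcal{G}_{E})$ supported in $\Phi^{-1}(\{k\})$ with $\omega(f)\neq 0$, and applies Krein--Milman to find an extreme point $\psi\in\mathcal{N}$ with $\psi(f)\neq 0$. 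A short face argument then shows $\psi$ is extremal among \emph{all} tracial weights, and from there your paragraph applies verbatim. For the state case the paper uses a different compact set, $M_{r}=\{\psi:\lVert\psi\rVert\leq 1,\ \psi\text{ tracial},\ |\psi(f)|\geq r\}$, and a maximality-in-$r$ argument to again extract an extremal tracial state with $\psi(f)\neq 0$. Both routes avoid any disintegration of $\psi$ over the ergodic components of $\nu_{\psi}$.
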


\begin{proof}
Let $\omega$ be a tracial weight on $C^{*}(E)$ which is not gauge-invariant. Since $\partial E$ is second countable we can find a strictly positive function $h\in C_{0}(\partial E)$ such that 
$$
\omega(h) = \int_{\partial E} h \; \mathrm{d} \nu_{\omega} =1 \; . 
$$
Letting $\mathcal{W}(C^{*}(E))$ denote the tracial weights on $C^{*}(E)$, we make the following claim:

\bigskip

\emph{Claim: The set $\mathcal{N}:=\{\psi \in \mathcal{W}(C^{*}(E)) \; | \; \psi(h)\leq 1\}$ is compact.}

\bigskip

Let $\{E_{n}\}_{n\in \mathbb{N}} \subseteq C_{c}(\partial E)_{+}$ be an approximate identity as chosen in Section 4 in \cite{C}. To see that $\mathcal{N}$ is a closed set notice that
$$
\mathcal{N}= \bigcap_{n\in \mathbb{N}} \{\psi \in \mathcal{W}(C^{*}(E)) \; | \; \psi(E_{n}hE_{n})\leq 1\}  \; .
$$ 
Since $h$ is strictly positive there exists a $\varepsilon_{n}>0$ for each $n\in \mathbb{N}$ such that $h(x) \geq \varepsilon_{n}$ for $x\in \text{supp}(E_{n})$. If $\psi \in \mathcal{N}$ it follows that
$$
\psi(E_{n}^{2})\cdot \varepsilon_{n} = \psi(\varepsilon_{n} E_{n}^{2}) \leq \psi( h) =1 \; ,
$$
and hence $\psi(E_{n}^{2}) \leq \varepsilon_{n}^{-1}$. It follows that the elements of $\mathcal{N}$ restricted to $\mathcal{A}_{n}:=\overline{E_{n} C^{*}(E)E_{n} }$ are contained in the ball in $\mathcal{A}_{n}^{*}$ of radius $\varepsilon_{n}^{-1}$. The topological space we embed $W(C^{*}(E))$ in is $\varprojlim_{n} \mathcal{A}_{n}^{*}$, c.f. Section 4 of \cite{C}, and hence $\mathcal{N}$ is compact in this space. This proves the claim.

\bigskip

Let now $f\in C_{c}(\mathcal{G}_{E})$ be supported in $\Phi^{-1}(\{k\})$ for some $k\in \mathbb{Z} \setminus \{0\}$ satisfy that $\omega(f)\neq 0$. Since the $\psi \in \mathcal{N}$ with $\psi(f) \neq 0$ is an open set by definition of the topology on $\varprojlim_{n} \mathcal{A}_{n}^{*}$, we have by the Krein-Milman Theorem that there exists an element $\psi'  \in \mathcal{N}$ which is a finite convex combination of extremal elements in $\mathcal{N}$ with $\psi'(f) \neq 0$. In particular there exists an extremal element $\psi \in \mathcal{N}$ with $\psi(f)\neq 0$. It follows that $\psi\neq 0$, so since $h>0$ on $\partial E$ then $\psi(h)>0$. Since $\psi$ is extremal in $\mathcal{N}$ we must have $\psi(h)=1$.

To see that $\psi$ also has to be an extremal tracial weight on $C^{*}(E)$, write $\psi=\psi_{1}+\psi_{2}$ for two non-zero tracial weights $\psi_{1}$ and $\psi_{2}$ on $C^{*}(E)$. Since $\psi(h)=1$ then $\psi_{i}(h)<\infty$ for $i=1,2$. Since $h>0$ then $\psi_{i}(h)=0$ would imply that the associated measure $\nu_{\psi_{i}}=0$ which would contradict that $\psi_{i}\neq 0$. Hence we can write
$$
\psi=\psi_{1}(h)  \frac{\psi_{1}}{\psi_{1}(h)}+\psi_{2}(h)  \frac{\psi_{2}}{\psi_{2}(h)} \; ,
$$
which implies that $\psi_{1} =\psi \cdot \psi_{1}(h)$ since $\psi$ is extremal in $\mathcal{N}$. In conclusion, we have an extremal tracial weight $\psi$ with $\psi(f)>0$. Since $\text{supp}(f) \subseteq \Phi^{-1}(\{k\})$ then $\gamma_{z}(f)=z^{k} f$, so $\psi$ is not gauge-invariant.

By Theorem \ref{thmgroupoiddescrip} it follows that the subgroup $H\subseteq \mathbb{Z}$ with 
$$
\nu_{\psi}(\{ x \in \partial E\; | \; H=\text{Per}(x) \}^{C} )=0
$$
is not $\{0\}$. If $\text{Per}(x)\neq \{0\}$ then there exists $k>l$ such that $\sigma^{k}(x)=\sigma^{l}(x)$. Then $\sigma^{k-l}(\sigma^{l}(x)) = \sigma^{l}(x)$ and it follows that $x\in \mathcal{C}$. In conclusion $\nu_{\psi}(\mathcal{C}^{C})=0$, and by Proposition \ref{proptracenong} there exists a summable loop in $E$.

\bigskip

Assume now that $\omega$ is a tracial state and let $f\in C_{c}(\mathcal{G}_{E})$ be supported in $\Phi^{-1}(\{k\})$ for some $k\in \mathbb{Z} \setminus \{0\}$ satisfy that $\omega(f)\neq 0$. The set 
$$
M_{r} = \{ \psi \in C^{*}(E)^{*} \; | \; \lVert \psi \rVert\leq 1, \text{ $\psi$ is a trace and } |\psi(f)|\geq r \}
$$
is a weak$^{*}$ compact subset of the dual for all $r\geq 0$. Since $M_{r_{2}} \subseteq M_{r_{1}}$ for $r_{2} \geq r_{1}$, there exists a unique $r_{0}>0$ such that $M_{r_{0}}\neq \emptyset$ and $M_{r} = \emptyset$ for $r >r_{0}$. Assume that $\psi$ is an extremal element in $M_{r_{0}}$, and that $\psi=\psi_{1}'+\psi_{2}'$ for two non-zero tracial weights $\psi_{1}'$ and $\psi_{2}'$. Set $\lambda = \nu_{\psi_{1}'}(\partial E)$. Since $\nu_{\psi}(\partial E)=1$ defining $\psi_{1}=\lambda^{-1} \psi_{1}'$ and $\psi_{2}=(1-\lambda)^{-1} \psi_{2}'$ gives
$$
\psi= \lambda \psi_{1}+(1-\lambda) \psi_{2} \; .
$$
This implies that
$$
r_{0}=|\psi(f)|\leq \lambda |\psi_{1}(f) |+(1-\lambda)| \psi_{2}(f)| \leq \lambda r_{0}+(1-\lambda) r_{0} = r_{0} \; ,
$$
by definition of $r_{0}$. Hence we have equality throughout, and $\psi_{i} \in M_{r_{0}}$ for $i=1,2$. This proves that $\psi$ is a tracial state which is extreme in the set of tracial weights and which satisfies $\psi(f)\neq 0$. The rest of the proof follows as in the tracial weight case and by invoking Corollary \ref{core54}.
\end{proof}

\begin{prop} \label{proplast}
Assume that $E$ is a second countable topological graph. If there exists a summable loop $\alpha$ in $E$, then there exists a non gauge-invariant tracial weight on $C^{*}(E)$. If there exists a summable loop $\alpha$ in $E$ with $|E_{\alpha}^{*}|<\infty$ then there exists a non-gauge invariant tracial state on $C^{*}(E)$. 
\end{prop}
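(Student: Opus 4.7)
The plan is to reverse the argument of Proposition \ref{propdecomp} by explicitly constructing a non gauge-invariant tracial weight from the summable loop $\alpha$, using Theorem \ref{thmcyclicbij} to produce a measure and Theorem \ref{thmgroupoiddescrip} to attach a non-trivial phase.

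First I would invoke Theorem \ref{thmcyclicbij} to get the non-zero extremal harmonic invariant measure
$$
\nu = \sum_{\beta \in E_{\alpha}^{*}} \delta_{\beta \alpha^{\infty}}
$$
concentrated on $\mathcal{C}$. By Theorem \ref{thmgroupoiddescrip} applied to $\nu$, there is a unique subgroup $H \leq \mathbb{Z}$ such that $\text{Per}(x) = H$ for $\nu$-a.e.\ $x$. The key computation is to identify $H = |\alpha|\mathbb{Z}$. One inclusion is immediate: for every $x = \beta \alpha^{\infty}$ in $\text{supp}(\nu)$, the relation $\sigma^{|\beta| + |\alpha|}(x) = \sigma^{|\beta|}(x) = \alpha^{\infty}$ exhibits $|\alpha| \in \text{Per}(x)$. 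For the converse, any positive $m \in \text{Per}(x)$ arises as $k - l$ with $\sigma^{k}(x) = \sigma^{l}(x)$; after replacing $k, l$ by $k + N|\alpha|, l + N|\alpha|$ for $N$ large enough that $l + N|\alpha| \geq |\beta|$, both shifted paths become shifts of $\alpha^{\infty}$, and condition (a) of Definition \ref{def217a} forces $|\alpha|$ to be the minimal period of $\alpha^{\infty}$, so $|\alpha| \mid m$. This is the step I expect to need the most care.

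With $H = |\alpha|\mathbb{Z}$ non-trivial, $C^{*}(H) \cong C(\mathbb{T})$ has many non-trace states; note that the gauge-invariant weight $\psi$ with $\nu_{\psi} = \nu$ corresponds precisely to the canonical trace $\tau(u_{m}) = \delta_{m,0}$, since gauge invariance forces $\psi$ to annihilate every $f$ supported off the diagonal of $\mathcal{G}_{E}$. I would pick any other state, for instance $\phi(u_{k|\alpha|}) := 1$ (the point evaluation at $1 \in \mathbb{T}$), and set $\psi := \psi_{\nu, \phi}$. To confirm $\psi$ is not gauge-invariant, take a bump $f \in C_{c}(\mathcal{G}_{E})$ supported on an open bisection around $(\alpha^{\infty}, |\alpha|, \alpha^{\infty})$ with $f(\alpha^{\infty}, |\alpha|, \alpha^{\infty}) > 0$; then the formula in Theorem \ref{thmgroupoiddescrip} gives a strictly positive contribution $\phi(u_{|\alpha|}) f(\alpha^{\infty}, |\alpha|, \alpha^{\infty})\, \nu(\{\alpha^{\infty}\})$, and since $\gamma_{z}(f) = z^{|\alpha|} f$, gauge invariance of $\psi$ would force $\psi(f) = 0$, a contradiction.

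For the tracial state statement, when $|E_{\alpha}^{*}| < \infty$ the measure $\nu$ is finite, so one replaces it by the probability measure $\nu' := \nu / |E_{\alpha}^{*}|$ and repeats the construction with the same $\phi$. The restriction of $\psi_{\nu', \phi}$ to $C_{0}(\partial E)$ is integration against $\nu'$, which has norm one, and combined with Theorem \ref{thmgroupoiddescrip} this should upgrade $\psi_{\nu', \phi}$ to a tracial state on $C^{*}(E)$; the same $f$ as above witnesses that it is not gauge-invariant. Beyond the period computation, the only residual care needed is verifying that $\psi_{\nu', \phi}$ is indeed bounded with norm one, which I expect to follow routinely from finiteness of $\nu'$ and the explicit description of $\psi_{\nu', \phi}$.
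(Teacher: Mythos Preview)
Your proposal is correct and follows essentially the same route as the paper: invoke Theorem \ref{thmcyclicbij} to get the atomic measure $\nu$, identify the period subgroup as $|\alpha|\mathbb{Z}$, pick the state $\phi(u_g)=1$ on $C^{*}(|\alpha|\mathbb{Z})$, and test against a function supported near $(\alpha^{\infty},|\alpha|,\alpha^{\infty})$ to see that $\psi_{\nu,\phi}$ is not gauge-invariant; in the finite case normalize. If anything you give more detail than the paper, which simply asserts $H=|\alpha|\mathbb{Z}$ and that the finite-$\nu$ weight can be normalized to a state.
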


\begin{proof}
If there exists a summable loop $\alpha$ in $E$, then the measure $\nu=\sum_{\beta \in E_{\alpha}^{*}} \delta_{\beta \alpha^{\infty}}$ is an extremal harmonic measure by Theorem \ref{thmcyclicbij}. The unique subgroup $H$ corresponding to $\nu$ in Theorem \ref{thmgroupoiddescrip} is $|\alpha| \mathbb{Z}$. Let the state $\phi$ on $C^{*}(|\alpha| \mathbb{Z})$ satisfy that $\phi(u_{g})=1$ for all $g\in |\alpha| \mathbb{Z}$. Recall that the map $\Phi$ on $\mathcal{G}_{E}$ given by $(x, k, y)\mapsto k$ gives rise to the gauge-action $\{\gamma_{z}\}_{z\in \mathbb{T}}$ on $C^{*}(\mathcal{G}_{E})$. If $f\in C_{c}(\mathcal{G}_{E})$ is a positive function which is supported on a bisection inside $\Phi^{-1}(|\alpha|)$ and satisfies $f(\alpha^{\infty}, |\alpha|, \alpha^{\infty})=1$, then $\psi_{\nu, \phi}(f)>0$. Hence $\psi_{\nu, \phi}(\gamma_{z}(f))=z^{|\alpha|}\psi_{\nu, \phi}(f)$ and there exists a tracial weight which is not gauge-invariant. If $|E_{\alpha}^{*} |<\infty$ then $\nu$ is finite, and the weight $\psi_{\nu, \phi}$ can be normalized to a tracial state.
\end{proof}

Combining Proposition \ref{proplast} and Proposition \ref{propdecomp} we obtain the following two Theorems.

\begin{thm} \label{thmnongauge}
Let $E$ be a second countable topological graph. All tracial states on $C^{*}(E)$ are gauge-invariant if and only if there does not exists a summable loop $\alpha$ in $E$ with $|E_{\alpha}^{*} | < \infty$. 
\end{thm}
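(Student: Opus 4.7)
The plan is that this theorem is really just the packaging together of the two directions already established in Proposition \ref{propdecomp} and Proposition \ref{proplast}, specifically their second statements which address tracial states (as opposed to general tracial weights). So the proof should be extremely short — essentially a one-line invocation of each direction — and no genuine new work is required.

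For the ``only if'' direction, I would argue by contrapositive. Suppose there exists a summable loop $\alpha$ in $E$ with $|E_{\alpha}^{*}|<\infty$. By the second assertion of Proposition \ref{proplast}, $C^{*}(E)$ carries a tracial state which is not gauge-invariant, contradicting the hypothesis. Concretely, this tracial state is produced by normalising the finite harmonic measure $\nu=|E_{\alpha}^{*}|^{-1}\sum_{\beta \in E_{\alpha}^{*}} \delta_{\beta \alpha^{\infty}}$ from Corollary \ref{core54} and coupling it, via Theorem \ref{thmgroupoiddescrip}, with the state on $C^{*}(|\alpha|\mathbb{Z})$ sending every generator $u_{g}$ to $1$.

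For the ``if'' direction, I would again use the contrapositive. Assume $C^{*}(E)$ admits a tracial state $\omega$ which is not gauge-invariant. The second part of Proposition \ref{propdecomp} then immediately yields a summable loop $\alpha$ in $E$ with $|E_{\alpha}^{*}|<\infty$. Recall that the proof of Proposition \ref{propdecomp} obtains such a loop by first extracting an extremal non gauge-invariant tracial state using the Krein-Milman theorem on the weak$^{*}$-compact set $M_{r_{0}}$, noting that the associated extremal measure $\nu_{\psi}$ must be concentrated on $\mathcal{C}$ because its period subgroup $H$ from Theorem \ref{thmgroupoiddescrip} is non-trivial, and then applying the bijection of Theorem \ref{thmcyclicbij} together with the finiteness of the resulting invariant measure (which forces $|E_{\alpha}^{*}|<\infty$).

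There is no real obstacle here since all the substantial work has been done upstream in Section 5 and in Propositions \ref{propdecomp}--\ref{proplast}. The only mild subtlety to double-check is that the finiteness of $\nu_{\psi}$ indeed corresponds to $|E_{\alpha}^{*}|<\infty$, but this is exactly the content of Corollary \ref{core54}.
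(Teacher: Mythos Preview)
Your proposal is correct and matches the paper's approach exactly: the paper states that Theorem~\ref{thmnongauge} is obtained by combining Proposition~\ref{propdecomp} and Proposition~\ref{proplast}, and your two contrapositive arguments invoke precisely the second statements of those propositions.
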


\begin{thm} \label{thmnongauge2}
Let $E$ be a second countable topological graph. All tracial weights on $C^{*}(E)$ are gauge-invariant if and only if there does not exists a summable loop $\alpha$ in $E$. 
\end{thm}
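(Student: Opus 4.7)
The plan is to observe that Theorem \ref{thmnongauge2} is the direct logical combination of Proposition \ref{propdecomp} and Proposition \ref{proplast}, both of which have already been proved in the excerpt. Specifically, the statement ``all tracial weights on $C^{*}(E)$ are gauge-invariant'' is the negation of ``there exists a non gauge-invariant tracial weight on $C^{*}(E)$'', and similarly for ``there does not exist a summable loop in $E$''. Hence it suffices to show that the existence of a non gauge-invariant tracial weight on $C^{*}(E)$ is equivalent to the existence of a summable loop in $E$.

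First I would handle the only-if direction. Assuming the existence of a non gauge-invariant tracial weight $\omega$ on $C^{*}(E)$, Proposition \ref{propdecomp} directly produces a summable loop $\alpha$ in $E$. Contrapositively, if $E$ has no summable loop, then every tracial weight on $C^{*}(E)$ must be gauge-invariant.

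For the if direction I would invoke Proposition \ref{proplast}: given a summable loop $\alpha$ in $E$, that proposition constructs a tracial weight $\psi_{\nu,\phi}$ on $C^{*}(E)$ (built from the extremal harmonic measure $\nu = \sum_{\beta \in E_{\alpha}^{*}} \delta_{\beta\alpha^{\infty}}$ from Theorem \ref{thmcyclicbij} together with a suitable state $\phi$ on $C^{*}(|\alpha|\Z)$) which satisfies $\psi_{\nu,\phi}\circ \gamma_{z} = z^{|\alpha|}\psi_{\nu,\phi}$ on a suitable function, and is therefore not gauge-invariant.

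There is no real obstacle here; the theorem is essentially a bookkeeping statement that records the equivalence already established by the two propositions. The only minor point to be careful about is that the proof should phrase the result as a biconditional on the level of graphs and their C$^{*}$-algebras, so I would write it as a two-line argument invoking the two propositions in turn.
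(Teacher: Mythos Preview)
Your proposal is correct and matches the paper's approach exactly: the paper simply states that Theorem \ref{thmnongauge2} (together with Theorem \ref{thmnongauge}) follows by combining Proposition \ref{propdecomp} and Proposition \ref{proplast}.
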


We will now use Proposition \ref{propdecomp} to provide an affirmative answer to a conjecture of Schaufhauser for second countable graphs. 

\begin{thm} \label{thmfreegraph}
If $E$ is a free second countable topological graph, then every tracial weight on $C^{*}(E)$ is gauge-invariant. This implies that there is a bijection between
\begin{enumerate}
\item \label{item1}the set of invariant measures on $\partial E$,
\item \label{item2}the set of vertex-invariant measures on $E^{0}$, and
\item \label{item3}the set of tracial weights on $C^{*}(E)$.
\end{enumerate}
The bijection between \eqref{item1} and \eqref{item2} is given by $\nu \mapsto r_{*} \nu$. The bijection between \eqref{item1} and \eqref{item3} is given by $\nu \mapsto \omega_{\nu}$ where $\omega_{\nu}$ is the weight
$$
\omega_{\nu}(a) = \int_{\partial E} P(a) \; \mathrm{d} \nu \quad \text{ for $a\in C^{*}(E)$.}
$$ 
\end{thm}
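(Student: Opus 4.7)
The plan is to reduce almost everything to results already established in the excerpt. The core claim is that in a free graph there cannot exist a summable loop, after which Theorem \ref{thmnongauge2} disposes of the main assertion.

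First I would verify the implication: if $E$ is free, then $E$ has no summable loop. Every summable loop is isolated, since condition (b$'$) of Definition \ref{def217a} applied to $w = r(\alpha)$ yields condition (b). Hence by Lemma \ref{lemma44}, freeness of $E$ rules out any isolated loop and in particular any summable loop. Theorem \ref{thmnongauge2} then says that every tracial weight on $C^{*}(E)$ is gauge-invariant.

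Next I would assemble the three-fold bijection. The equivalence \eqref{item1} $\leftrightarrow$ \eqref{item2} is exactly Proposition \ref{propinvmeasurebij}, via $\nu \mapsto r_{*}\nu$; no extra work is required. For \eqref{item1} $\leftrightarrow$ \eqref{item3}, the map $\nu \mapsto \omega_{\nu}$ with $\omega_{\nu}(a) = \int_{\partial E} P(a)\,\mathrm{d}\nu$ is a well-defined tracial weight by Theorem 6.3 in \cite{C} (recalled in the preliminaries), and it is gauge-invariant since $\omega_{\nu} = \omega_{\nu}\circ P$. Injectivity is immediate from the fact that $\nu$ can be recovered as the restriction of $\omega_{\nu}$ to $C_{0}(\partial E)$. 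For surjectivity, let $\omega$ be any tracial weight on $C^{*}(E)$; by the first part of the proof $\omega$ is gauge-invariant, and, as noted in the excerpt just before Proposition \ref{propinvmeasurebij}, gauge-invariance is equivalent to $\omega = \omega\circ P$. Hence $\omega$ is determined by its restriction to $C_{0}(\partial E)$, and this restriction is the invariant measure $\nu_{\omega}$ associated to $\omega$ via the groupoid picture. Thus $\omega = \omega_{\nu_{\omega}}$, and the map $\nu \mapsto \omega_{\nu}$ is a bijection.

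There is essentially no obstacle here beyond the clean bookkeeping of which earlier result gives which arrow; the only point that requires a sentence of care is the equivalence between gauge-invariance and $\omega = \omega \circ P$ for tracial \emph{weights} rather than states, but this follows from the general theory of \cite{C} already invoked in the preliminaries, since $P$ is the conditional expectation onto the fixed-point algebra of the gauge action on $C^{*}(\mathcal{G}_{E})$.
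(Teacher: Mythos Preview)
Your argument is correct and follows essentially the same route as the paper's proof. The paper argues the contrapositive of the main assertion, invoking Proposition~\ref{propdecomp} directly (non gauge-invariant tracial weight $\Rightarrow$ summable loop $\Rightarrow$ isolated loop $\Rightarrow$ not free via Lemma~\ref{lemma44}), whereas you argue the direct implication and cite the packaged Theorem~\ref{thmnongauge2} instead; since Theorem~\ref{thmnongauge2} is just the combination of Propositions~\ref{propdecomp} and~\ref{proplast}, this is the same argument. The paper's written proof stops after the main assertion and leaves the three-way bijection implicit, so your additional paragraph spelling out \eqref{item1}$\leftrightarrow$\eqref{item2} via Proposition~\ref{propinvmeasurebij} and \eqref{item1}$\leftrightarrow$\eqref{item3} via $\nu\mapsto\omega_{\nu}$ is extra detail rather than a different method; your caveat about extending the equivalence ``gauge-invariant $\Leftrightarrow$ $\omega=\omega\circ P$'' from states to weights is well placed and is indeed what the general theory in \cite{C} supplies.
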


Since Theorem \ref{thmfreegraph} in particular implies that every tracial state on a second countable free topological graph is gauge-invariant, it gives an affirmative answer to Conjecture 6.5 in \cite{S} for second countable graphs $E$, i.e. for graphs $E$ with $C^{*}(E)$ a separable C$^{*}$-algebra. Before turning to the proof of the Theorem, let us write out the following Corollary, which follows from the observation that $E$ is a free topological graph if $C^{*}(E)$ is simple, c.f. Theorem 8.12 in \cite{K3}.

\begin{cor}
If $C^{*}(E)$ simple and separable, then every tracial weight on $C^{*}(E)$ is gauge-invariant. This implies that there is a bijection between
\begin{enumerate}
\item \label{item1}the set of invariant measures on $\partial E$,
\item \label{item2}the set of vertex-invariant measures on $E^{0}$, and
\item \label{item3}the set of tracial weights on $C^{*}(E)$.
\end{enumerate}
The bijection between \eqref{item1} and \eqref{item2} is given by $\nu \mapsto r_{*} \nu$. The bijection between \eqref{item1} and \eqref{item3} is given by $\nu \mapsto \omega_{\nu}$ where $\omega_{\nu}$ is the weight
$$
\omega_{\nu}(a) = \int_{\partial E} P(a) \; \mathrm{d} \nu \quad \text{ for $a\in C^{*}(E)$.}
$$
\end{cor}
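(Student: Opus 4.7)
The statement naturally decomposes into (a) showing every tracial weight on $C^*(E)$ is gauge-invariant, and (b) producing the three-way bijection. For (a), I would observe that every summable loop is automatically an isolated loop, since applying condition (b') of Definition \ref{def217a} to $w = r(\alpha)$ recovers condition (b). By Lemma \ref{lemma44}, freeness of $E$ forbids isolated loops, hence also summable loops, and Theorem \ref{thmnongauge2} concludes that every tracial weight is gauge-invariant.

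For (b), the bijection between (1) and (2) is exactly Proposition \ref{propinvmeasurebij}, so the remaining work is to verify that $\nu \mapsto \omega_\nu$, with $\omega_\nu(a) := \int_{\partial E} P(a)\, \mathrm{d}\nu$, is a bijection between invariant measures on $\partial E$ and tracial weights on $C^*(E)$. The preliminaries supply both this map and its candidate inverse $\omega \mapsto \nu_\omega := \omega|_{C_0(\partial E)}$; one composition $\nu \mapsto \omega_\nu \mapsto \nu_{\omega_\nu}$ is the identity because $P$ fixes $C_0(\partial E)$ pointwise. The map $\nu \mapsto \omega_\nu$ is affine, so once bijectivity is established we can combine it with Proposition \ref{propinvmeasurebij} to read off the formulas in the statement.

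The main obstacle is the opposite composition, namely that an arbitrary tracial weight $\omega$ is recovered as $\omega_{\nu_\omega}$. My plan is to handle extremal tracial weights first using Theorem \ref{thmgroupoiddescrip}, which parameterises them by pairs $(\nu,\phi)$ with $\nu$ an extremal invariant measure and $\phi$ a state on $C^*(H)$, for the unique subgroup $H$ of periodicities attached to $\nu$. I claim $H = \{0\}$ under freeness: otherwise $\nu$-almost every $\alpha$ would satisfy $\sigma^k(\alpha) = \sigma^l(\alpha)$ for some $k > l$ and hence lie in $\mathcal{C}$, so $\nu$ is a harmonic measure concentrated on $\mathcal{C}$ and Proposition \ref{proptracenong} produces a summable loop, contradicting Lemma \ref{lemma44}. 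When $H = \{0\}$ the state $\phi$ on $C^*(H) = \mathbb{C}$ is unique and the sum in Theorem \ref{thmgroupoiddescrip} reduces to the single isotropy element at each point, giving $\omega(f) = \int_{\partial E} f(\alpha,0,\alpha)\, \mathrm{d}\nu(\alpha) = \omega_{\nu_\omega}(f)$ on $C_c(\mathcal{G}_E)$, which extends to all of $C^*(E)_+$ by density and lower semi-continuity. A general tracial weight can be decomposed into extremal ones along the lines of the Krein--Milman/Choquet argument already used in the proof of Proposition \ref{propdecomp}, and linearity of both $\omega \mapsto \nu_\omega$ and $\nu \mapsto \omega_\nu$ transfers the identity $\omega = \omega_{\nu_\omega}$ to the non-extremal case, completing the bijection.
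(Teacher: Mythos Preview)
Your argument has one genuine gap: in part (a) you write ``By Lemma \ref{lemma44}, freeness of $E$ forbids isolated loops,'' but the hypothesis of the corollary is that $C^{*}(E)$ is \emph{simple}, not that $E$ is free. You never bridge these. The paper's entire proof of the corollary is precisely this missing step: by Theorem~8.12 in \cite{K3}, simplicity of $C^{*}(E)$ forces $E$ to be free (and separability gives second countability via Proposition~6.3 in \cite{K1}), after which the corollary is nothing but Theorem~\ref{thmfreegraph}. What you have written is essentially a correct re-derivation of Theorem~\ref{thmfreegraph} itself, but it does not address the content that distinguishes the corollary from that theorem.

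For part (b) your argument is more elaborate than the paper's, which leaves the bijections as direct consequences of gauge-invariance together with the groupoid-weight preliminaries. Your treatment of extremal weights via Theorem~\ref{thmgroupoiddescrip} and Proposition~\ref{proptracenong} is fine, but the closing appeal to a Krein--Milman/Choquet decomposition to pass from extremal to arbitrary tracial \emph{weights} is delicate: weights are unbounded, so one needs the compactness machinery of \cite{C} (as in the proof of Proposition~\ref{propdecomp}) rather than ordinary state-space convexity, and even then integral decompositions of weights require care. A more direct route, implicit in the paper, is that a gauge-invariant tracial weight automatically satisfies $\omega = \omega \circ P$, whence $\omega = \omega_{\nu_{\omega}}$ for \emph{every} tracial weight once part (a) is in hand; no extremal decomposition is needed.
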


\begin{proof} [Proof of Theorem \ref{thmfreegraph}]
We will prove that if there exists a non gauge-invariant tracial weight then the graph is not free. If there exists a non gauge-invariant tracial weight it follows from Proposition \ref{propdecomp} that there exists a summable loop $\alpha$ in $E^{*}$. We have already noted that summable loops are isolated, so by Lemma \ref{lemma44} the graph is not free.

\end{proof}


\begin{bibdiv}
\begin{biblist}

\bib{C}{misc}{
      author={Christensen, J.},
       title={The structure of KMS weights on \'etale groupoid C$^{*}$-algebras},
         how={preprint},
        date={2020},
      eprint={\href{https://arxiv.org/abs/2005.01792}{\texttt{ arXiv:2005.01792v2}}},
}

\bib{Cohn}{book}{
      author={Cohn, D.L.},
       title={Measure Theory},
  series={Birkh\"auser Advanced Texts},
   edition={Second Edition},
   publisher={Springer, New York},
   date={2013},
   pages={xxi+457},
   isbn={978-1-4614-6956-8 (eBook)},
}

\bib{K1}{article}{
   author={Katsura, T.},
   title={ A class of C$^{*}$-algebras generalizing both graph algebra and homeomorphism C$^{*}$-algebras I, fundamental results},
   journal={Trans. Amer. Math. Soc.},
   volume={356},
   date={2004},
   pages={4287--4322},
}

\bib{K2}{article}{
   author={Katsura, T.},
   title={ A class of C$^{*}$-algebras generalizing both graph algebra and homeomorphism C$^{*}$-algebras II, Examples},
   journal={Int. J. Math},
   volume={17},
   date={2006},
  number={7},
   pages={791--833},
   doi={10.1142/S0129167X06003722},
}

\bib{K3}{article}{
   author={Katsura, T.},
   title={ A class of C$^{*}$-algebras generalizing both graph algebra and homeomorphism C$^{*}$-algebras III, ideal structures},
   journal={Ergod. Theory Dyn. Syst.},
   volume={26},
   date={2006},
   number={6},
   pages={1805--1854},
   doi={10.1017/S0143385706000320},
}

\bib{OO}{article}{
   author={Okada, S.},
   author={Okazaki, Y.},
   title={Projective limit of infinite Radon measures},
   journal={J. Aust. Math. Soc.},
   volume={25},
   date={1978},
   number={3},
   pages={328--331},
   doi={10.1017/S1446788700021078},
}

\bib{RW}{article}{
   author={Yeend, T.},
   author={Williams, D.},
   title={Amenability of groupoids arising from partial semigroup actions and topological higher rank graphs},
   journal={Trans. Amer. Math. Soc.},
   volume={369},
   date={2017},
   number={4},
   pages={2255--2284},
}

\bib{S}{article}{
   author={Schafhauser, C.},
   title={Traces on topological graph algebras},
   journal={Ergod. Theory Dyn. Syst.},
   volume={38},
   date={2018},
   number={5},
   pages={1923--1953},
   doi={10.1017/etds.2016.114},
}

\bib{Tomforde}{article}{
   author={Tomforde, M.},
   title={The ordered K$_{0}$-group of a graph C$^{*}$-algebra},
   journal={C. R. Math. Acad. Sci. Soc. R. Can.},
   volume={25},
   date={2003},
   pages={19--25},
}

\bib{T1}{article}{
   author={Thomsen, K.},
   title={KMS weights on groupoid and graph C$^{*}$-algebras},
   journal={J. Funct. Anal},
   volume={266},
   date={2014},
   number={5},
   pages={2959--2988},
   doi={10.1016/j.jfa.2013.10.008},
}

\bib{T2}{article}{
   author={Thomsen, K.},
   title={KMS weights on graph C$^{*}$-algebras},
   journal={Adv. Math.},
   volume={309},
   date={2017},
   pages={334--391},
   doi={10.1016/j.aim.2017.01.024},
}

\bib{Y}{article}{
   author={Yeend, T.},
   title={Groupoid models for the C$^{*}$-algebras of topological higher-rank graphs},
   journal={J. Oper. Theory},
   volume={57},
   date={2007},
   number={1},
   pages={95--120},
}

\bib{N}{article}{
   author={Neshveyev, Sergey},
   title={KMS states on the $C^\ast$-algebras of non-principal groupoids},
   journal={J. Operator Theory},
   volume={70},
   date={2013},
   number={2},
   pages={513--530},
   issn={0379-4024},
   review={\MR{3138368}},
   doi={10.7900/jot.2011sep20.1915},
}

\bib{Rbook}{book}{
   author={Renault, Jean},
   title={A groupoid approach to $C^{\ast} $-algebras},
   series={Lecture Notes in Mathematics},
   volume={793},
   publisher={Springer, Berlin},
   date={1980},
   pages={ii+160},
   isbn={3-540-09977-8},
   review={\MR{584266}},
}

\end{biblist}
\end{bibdiv}

\bigskip

\end{document}